\numberwithin{equation}{section}
\newtheorem{thm}{Theorem}[section]
\newtheorem{cor}[thm]{Corollary}
\newtheorem{lem}[thm]{Lemma}
\newtheorem{exam}[thm]{Example}
\newtheorem{defn}[thm]{Definition}
\theoremstyle{remark}
\newtheorem{rem}[thm]{Remark}
\numberwithin{equation}{section}
\newcommand{\al}{\alpha}
\def\be{\beta}
\def\sph{\mathbb{S}^{d-1}}
\def \b{\beta}
\def\l{\left}
\def\Ld{\Lambda}
\def\da{\delta}
\def\og{\omega}
\def\Bl{\Bigl}
\def\Br{\Bigr}
\def\f{\frac}
\def\({\Bigl(}
\def \){ \Bigr)}
\def\Ga{\Gamma}
\def\hb{\hfill$\Box$}
\def\la{\label}
\newcommand{\ld}{\lambda}
\newcommand\p{\partial}
 \def\a{{\alpha}}
 \def\b{{\beta}}
 \def\g{{\gamma}}
 \def\t{{\theta}}
 \def\l{{\lambda}}
 \def\d{{\delta}}
 \def\o{{\omega}}
 \def\s{{\sigma}}
 \def\la{{\langle}}
 \def\ra{{\rangle}}
 \def\ve{{\varepsilon}}
 \def\CC{{\mathbb C}}
 \def\NN{{\mathbb N}}
 \def\RR{{\mathbb R}}
 \def\ZZ{{\mathbb Z}}
        \def\proj{\operatorname{proj}}
\def\sa{\sigma}
\def\ta{\theta}
\newcommand{\tr}{\triangle}
\def\bi{\bibitem}
\def\va{\varepsilon}
\def\vi{\varphi}
\def\RR{\mathbb{R}}
\def\SS{\mathbb{S}}
\def\ra{\rangle}
\def\la{\langle}
\def\HH{\mathcal{H}}
\begin{document}

\title[Weighted Fractional Bernstein's inequalities]
{Weighted Fractional Bernstein's inequalities 
\\ and their applications
}
\author{Feng Dai}
\address{Department of Mathematical and Statistical Sciences\\
University of Alberta\\ Edmonton, Alberta T6G 2G1, Canada.}
\email{fdai@ualberta.ca}
\author{Sergey Tikhonov} \address{ICREA and Centre de Recerca Matem\`{a}tica\\
Campus de Bellaterra, Edifici C
08193 Bellaterra (Barcelona), Spain.}
\email{ stikhonov@crm.cat}

\date{\today}
\keywords{Weighted polynomial inequalities, polynomial approximation,
  sphere} \subjclass{33C50,
33C52, 42B15, 42C10}

\thanks{The first  author  was partially supported  by the NSERC Canada
under grant RGPIN 311678-2010.  The second author was partially supported  by
 MTM 2011-27637, 2009 SGR 1303,  RFFI 13-01-00043, and NSH-979.2012.1.
}

\begin{abstract}

 This paper studies  the following  weighted,  fractional  Bernstein inequality for spherical polynomials on $\sph$:
  \begin{equation}\label{4-1-TD-ab}
\|(-\Delta_0)^{r/2} f\|_{p,w}\leq C_w n^{r} \|f\|_{p,w}, \   \  \forall f\in \Pi_n^d,
\end{equation}
 where $\Pi_n^d$ denotes the space of all spherical polynomials of degree at most $n$ on $\sph$, and $(-\Delta_0)^{r/2}$ is the fractional  Laplacian-Beltrami operator on $\sph$.
  A new class of doubling weights with conditions weaker than the $A_p$ is introduced, and used to  fully  characterize  those  doubling weights $w$ on $\sph$ for which the weighted Bernstein inequality \eqref{4-1-TD-ab} holds for some  $1\leq p\leq \infty$ and all $r>\tau$.  In the unweighted case, it is shown   that
 if  $0<p<\infty$ and $r>0$ is not an even integer, then \eqref{4-1-TD-ab} with $w\equiv 1$ holds if and only if $r>(d-1)(\f 1p-1)$. As applications, we show that any function $f\in L_p(\sph)$ with $0<p<1$ can be approximated by the de la Vall\'ee Poussin means of a Fourier-Laplace series, and establish a sharp  Sobolev type Embedding theorem for the weighted Besov spaces with respect to general doubling weights.

\end{abstract}

\maketitle

\section{Introduction}

 One of the  fundamental results in analysis is the following Bernstein inequality for trigonometric polynomials: 
\begin{equation} \label{ber}
\|f^{(r)}\|_{p} \le C {n^r}\|f\|_{p},\qquad 0<p\le \infty, \quad r\in\mathbb{N},\  \   f\in\mathcal{T}_n,
\end{equation}
 where $\|\cdot\|_p=\|\cdot\|_{L^p[0,2\pi]}$,  $\mathcal{T}_n$ denotes the space of all trigonometric polynomials of degree at most $n$, and $C=1$ is known to be   the best constant (see \cite[p. 16, (4.4)]{arestov}).
In  \cite[p. 45, Theorem 4.1]{MT2}, Mastroianni and Totik established  a weighted analogue of  (\ref{ber}) for all doubling weights. Among other things, they proved that for any doubling weight $w$,
\begin{equation} \label{berw}
\|f^{(r)}\|_{p,w} \le C_w {n^r}\|f\|_{p,w},\   \  \forall f\in\mathcal{T}_n, \   \ r\in \NN, \  \    1\leq p\leq \infty,
\end{equation}
where $\|f\|_{p,w}=\|f\,w^{1/p}\|_{p}$, and $C_w$ depends only on the doubling constant of $w$.
Later on, \eqref{berw} was extended to the case of  $0< p< 1$ by Erd\'{e}lyi   \cite[p. 69, Theorem 3.1]{er}.

For 
spherical polynomials  on the unit sphere $\sph$,
it was shown in \cite[Corollary 5.2, p. 155]{Dai1} that  if $r$ is an even  integer and $w$ is a doubling weight, then the weighted Bernstein inequality,
\begin{equation}\label{4-1-TD-new}
\|(-\Delta_0)^{r/2} f\|_{p,w}\leq C_w n^{r} \|f\|_{p,w}, \   \  \forall f\in \Pi_n^d,
\end{equation}
holds for all $0<p\leq \infty$, where $\Pi_n^d$ denotes the space of all spherical polynomials of degree at most $n$ on $\sph$, and $\Delta_0$ is the Laplacian-Beltrami operator on $\sph$.   In the unweighted case (i.e., $w=1$),
\eqref{4-1-TD-new} was shown earlier in \cite[p.330, Theorem 3.2]{Di3} for 
 all $1\leq p\leq \infty$.

 The fractional Bernstein inequality, namely, the inequality  \eqref{berw} or \eqref{4-1-TD-new} for   positive $r$ that may not be an integer,  plays an important role in harmonic analysis and PDE (see, for instance,  \cite{wu, wu1}), and the investigation of this   inequality has a long history.
Firstly, Lizorkin \cite{li} showed that (\ref{ber}) holds for all  $r>0$ and $1\leq p\leq\infty$. ( A similar result for functions of exponential type was also established in \cite{li}). Secondly, the fractional Bernstein inequality for trigonometric polynomials for $0<p<1$  was studied by Belinskii and   Liflyand \cite{BL}, who particularly observed that if $r>0$ is not an integer, then \eqref{ber} does not hold for the full range of $0<p<1$. Of related interest is the fact that the (unweighted) fractional  Bernstein inequality remains true   in  the  $H^p$ spaces  for all $0<p\leq 1 $ and  $r>0$.
 Finally, the fractional  Bernstein inequality with $1\leq p\leq \infty$  was established for   multivariate trigonometric polynomials, and for  spherical harmonics  in \cite{ru1, ru2} and \cite{Di3, kam}, respectively.

In this paper, we shall study the weighted,  fractional  Bernstein inequality for spherical polynomials on $\sph$ as well as its applications in approximation theory.
 We shall give a full  characterization of all those  doubling weights for which the weighted Bernstein inequality \eqref{4-1-TD-new} holds for some $r\notin 2\NN$ and $1\leq p\leq \infty$.
 It turns out that  there is a considerable difference
   between the cases of integer power and non-integer power
   (i,e.,  fractional power)  of the Laplace-Beltrami operator on the sphere.
   In fact,  in
 the unweighted case, we  prove the following.

\begin{thm} \label{sharp-bernstein}
If $0<p<\infty$,  $r>0$, and $d\ge 3$, then
\begin{align}
&\sup_{f\in \Pi_n^d, \  \|f\|_p\leq 1} \|(-\Delta_0)^{r/2} f\|_{L^p(\sph)}\notag\\
&\sim \begin{cases}
n^r, &\   \  \text{if $r>(d-1)(\f 1p-1)$ or $r\in 2\NN$;}\\
n^{r}\log^{\f 1p} n, &\    \ \text{if $r=(d-1)(\f 1p-1)$, and $r\notin 2\NN$};\\
n^{(d-1)(\f 1p-1)}, &\   \  \text{if $r<(d-1)(\f 1p-1)$, and $r\notin 2\NN$}.
\end{cases}\label{4-2-DT}
\end{align}
\end{thm}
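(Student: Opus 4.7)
I would prove the upper and lower bounds separately, reducing to the genuinely new range $0<p<1$ with $r\notin 2\NN$. When $r\in 2\NN$ the bound $\|(-\Delta_0)^{r/2}f\|_p\lesssim n^r\|f\|_p$ for all $0<p\le\infty$ is the unweighted case of \cite[Cor.~5.2]{Dai1}, and when $p\ge 1$ with any $r>0$ it is due to Kamzolov \cite{kam}. In both of these sub-cases $(d-1)(1/p-1)\le 0$, so only the first line of \eqref{4-2-DT} occurs; its lower bound $\gtrsim n^r$ is saturated by a single top-degree spherical harmonic, on which $(-\Delta_0)^{r/2}$ acts as multiplication by $\lambda_n^{r/2}\sim n^r$. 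The critical exponent $r_c:=(d-1)(1/p-1)$ appearing in the other cases is precisely the exponent in Nikolskii's inequality $\|g\|_1\lesssim n^{r_c}\|g\|_p$ for $g\in\Pi_n^d$, which indicates that the transition is a polynomial-sampling phenomenon: for $r<r_c$ the $L^p$-to-$L^1$ inclusion loss dominates the $n^r$ coming from the fractional derivative.

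For the upper bound in the new range, the plan is to represent $(-\Delta_0)^{r/2}$ on $\Pi_n^d$ as convolution with the zonal polynomial multiplier kernel
\[
K_n^{(r)}(\cos\theta):=\sum_{j\ge 1}\eta\!\Bigl(\tfrac{j}{n}\Bigr)\lambda_j^{r/2}\,Z_{j,d}(\cos\theta),
\]
where $\lambda_j=j(j+d-2)$, $Z_{j,d}$ is the reproducing kernel of the degree-$j$ spherical harmonics, and $\eta\in C^\infty_c([0,2))$ with $\eta\equiv 1$ on $[0,1]$. A Hilb / van~der~Corput analysis of the Gegenbauer expansion should give a two-scale pointwise description of $K_n^{(r)}(\cos\theta)$: a peak of height $\sim n^{r+d-1}$ on $\theta\lesssim 1/n$ together with a controlled oscillatory tail of size $\sim n^{r+(d-2)/2}\theta^{-(d-1)/2-r}$ on $1/n\lesssim\theta\le\pi$. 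Since Young's inequality is unavailable for $p<1$, this kernel estimate must be converted into an $L^p$-operator bound by polynomial sampling: combining Peetre-type maximal inequalities on $\Pi_n^d$ with Nikolskii's inequality allows one to pass from pointwise/$L^1$ bounds on $K_n^{(r)}*f$ to the desired $L^p$ estimates with a sharp loss controlled by $n^{r_c}$. Summing the resulting contributions according to whether $r>r_c$, $r=r_c$, or $r<r_c$ produces the three lines of \eqref{4-2-DT}; the logarithm at criticality arises from the marginal integral $\int_{1/n}^{1}\theta^{-1}\,d\theta$.

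For the matching lower bounds I would construct explicit zonal extremizers in each regime, designed to exploit oscillatory cancellation so that the $L^p$-quasinorm of $f$ is diminished by cancellations between Gegenbauer polynomials, while $(-\Delta_0)^{r/2}f$ is amplified by the multiplier $\lambda_j^{r/2}$ enough to realize the Nikolskii factor $n^{r_c}$. The main obstacle is the $0<p<1$ upper bound: frequency-localization operators are not uniformly bounded on $L^p$ for $p<1$, so the classical Littlewood--Paley decomposition is unavailable, and one must work directly with the polynomial kernel $K_n^{(r)}$ via its sharp asymptotic behaviour (requiring non-integer-$r$ versions of the Hilb / Darboux asymptotics for Gegenbauer polynomials) in conjunction with polynomial maximal-function inequalities, rather than through classical multiplier theory.
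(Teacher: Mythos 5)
Your high-level outline is on the right track --- reduce to $0<p<1$ with $r\notin 2\NN$, represent $(-\Delta_0)^{r/2}\big|_{\Pi_n^d}$ as convolution with the zonal kernel $K_{n,r}$, and combine a pointwise kernel estimate with Nikolskii/maximal-function techniques for the upper bound. You have also correctly identified the critical exponent $r_c=(d-1)(1/p-1)$ as the Nikolskii exponent. But there are two concrete problems.

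First, the kernel estimate you propose is wrong, and in fact internally inconsistent. You claim a tail of size $\sim n^{r+(d-2)/2}\theta^{-(d-1)/2-r}$ for $1/n\lesssim\theta\leq\pi$, matched to a peak of height $\sim n^{r+d-1}$ on $\theta\lesssim 1/n$. Evaluating the tail formula at $\theta\sim 1/n$ gives $n^{2r+d-3/2}$, which agrees with $n^{r+d-1}$ only when $r=1/2$. What is true, and what the paper proves (Lemma \ref{lem-3-3-DT}), is $|K_{n,r}(\cos\theta)|\leq c\,n^{d-1+r}(1+n\theta)^{-(d-1+r)}$, i.e., the tail is $\sim\theta^{-(d-1+r)}$ with no residual $n$-factor once $n\theta\gtrsim 1$. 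That decay does not come from Hilb/Darboux oscillation of a single Gegenbauer polynomial; it comes from the smooth cutoff $\eta$ together with the singularity of $j\mapsto(j(j+d-2))^{r/2}$ at $j=0$ (for $r\notin 2\NN$), handled by a dyadic decomposition of the symbol and the localization Lemma \ref{lem-1-1-sec1}. This matters because plugging your too-large tail into the Nikolskii scheme would yield a non-sharp (too large) upper bound. Related to this, you reject the Littlewood--Paley decomposition for $p<1$, but the paper's proof of Theorem \ref{thm4-2-TD} does use precisely such a decomposition $L_j=V_{2^j}-V_{2^{j-1}}$, with the unboundedness problem circumvented by applying Nikolskii at each scale $2^j$; that is the standard workaround, not an obstruction.

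Second, and more seriously, your lower-bound plan is vague where the actual work lies. You say you would ``construct explicit zonal extremizers'' that ``exploit oscillatory cancellation so that the $L^p$-quasinorm of $f$ is diminished.'' In the paper the extremizer is essentially the de la Vall\'ee Poussin kernel itself, and the substance is Theorem \ref{prop-1-2-ch3}, reduced to Lemma \ref{lem-3-4-TD}: a \emph{two-sided} estimate $|G_{n,r}(\cos\theta)|\sim\theta^{-(d-1+r)}$ for $\theta\in[An^{-1},\va]$. The lower half of that two-sided bound is the hard part --- it is proved by $\ell$-fold summation by parts, rewriting $G_{n,r}$ in terms of Ces\`aro $(C,v)$-kernels $S_k^{v,(\alpha+\ell,\beta)}$, and then invoking the \emph{positivity} of those Ces\`aro kernels (Askey--Gasper/Gasper, Lemma \ref{lem-3-5-DT}) so that the dominant sum is a sum of nonnegative terms whose size can be read off from $E_k^{(\alpha+\ell,\beta)}(1)\sim k^{2\alpha+2\ell+1}$. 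Nothing in your proposal provides a mechanism for a lower bound on the kernel tail; Hilb-type asymptotics give oscillation and hence cancellation, which is exactly the wrong direction. Without this positivity/lower-bound step, the cases $r\leq r_c$ in \eqref{4-2-DT} --- the genuinely new content of the theorem --- remain unproved.
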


According to  Theorem \ref{sharp-bernstein},
in the unweighted  case (i.e.,  $w=1$),
  the Bernstein inequality
\eqref{4-1-TD-new} for a non-integer (i.e., fractional)
 power  $ r/2$ of the Laplace-Beltrami operator
 holds if and only if $ p> \f{d-1}{d-1+r}$,
 whereas    \eqref{4-1-TD-new} for an integer  power $r/2$  holds for the full range of $0<p<\infty$.

 We point out that in the case when $d=2$ and $r$ is not an integer, Theorem \ref{sharp-bernstein}  is due to Belinskii and Liflyand \cite{BL}, where
 the proofs do  not  seem to work  for  the higher-dimensional case.

The paper is organized as follows.
Section 2 contains some preliminary results on
spherical polynomial expansions on the unit sphere, as well as   a technical theorem, Theorem \ref{prop-1-2-ch3}, which gives sharp
asymptotic  estimates of the weighted norms of certain kernel functions. This  theorem
  plays a crucial role in the proof of Theorem
   \ref{sharp-bernstein}, whereas  its  proof is
    postponed to the appendix.
Basic facts on doubling weights and several useful weighted polynomial inequalities   are presented  in Section 3.
The fourth  section  is devoted to the proof of the fractional Bernstein inequality for spherical polynomials on $\sph$.  Theorem \ref{sharp-bernstein},  as well as the weighted  Bernstein inequality with doubling weights for $0<p\leq \infty$  are proved in this section. After that, in Section 5, we show that our method can yield a better result for weighted  fractional Bernstein inequality
with the   Muckenhoupt $A_p$ weights.

One of our  main results in  this paper is given in Section 6,
 where we prove  a full  characterization of the doubling weights for which the weighted Bernstein inequality   holds. We introduce a new class $\mathbb{A}_{p,\tau}$ of  weights on $\sph$ and prove that the inequality (\ref{4-1-TD-new}) holds for any $r>\tau$ if and only if $w\in\mathbb{A}_{p,\tau}.$
In particular,
the inequality $\|f^{(r)}\|_{p,w} \le C_w {n^r}\|f\|_{p,w}$,  $1\leq p < \infty$, holds for a trigonometric
polynomial $f\in\mathcal{T}_n$ for any $r>\tau$ if and only if $w\in\mathbb{A}_{p,\tau}.$

In Section 7, we consider  spherical polynomial approximation  in $L^p$ for  $0<p<1$, 
 following  the approach  of Oswald for the trigonometric polynomials \cite{osw}.
 In particular, we show that
if $0<p<1$ and $f\in L^p(\sph)$, then
there exists a Fourier-Laplace series $\sa$ on the sphere $\sph$
  such that the following quantitative estimate holds:
$$
\|f-V_n\sa\|_p \leq C n^{-(d-1)(\f 1p-1)} \Bigl( \sum_{k=1}^n k^{d-2-(d-1)p} E_k (f)_p^p\Bigr)^{\f1p},
$$ where $V_n$ is the de la Vall\'ee Poussin operator, and $E_k(f)_p:=\inf_{g\in\Pi_n^d} \|f-g\|_p$.
If, in addition, $\sum_{k={n+1}}^\infty k^{(d-2)-(d-1)p}E_k(f)_p^p<\infty$, then $V_n f$ is well defined, and we have
$$ \|f-V_n f\|_p\leq C n^{-(d-1)(\f 1p-1)} \Bigl( \sum_{k={n+1}}^\infty k^{(d-2)-(d-1)p}E_k(f)_p^p\Bigr)^{\f1p}.
$$

In Section 8,  we show how to apply our result to deduce the  Sobolev-type embedding theorem for the weighted Besov spaces at the critical index.  
We prove that  if $0<p<q\le\infty$,
$w$ is a  doubling weight on $\sph$, and $\alpha=s_w(\frac1p-\frac1q)$,  then the weighted Besov space $B^\alpha_q(L_{p,w})$ can be continuously embedded into the space $L_{q,w}$, where $s_w$ is a geometric constant depending only on $w$. (The precise definition of $s_w$ is given in Section 3).  Examples will be given to show the index $\alpha=s_w(\frac1p-\frac1q)$, in general, is sharp.
This result 
 improves a result in \cite[Cor. 4]{hesse} and \cite[Th. 2.5]{DW}.
For the classical result, we refer to  the  paper of Peetre \cite[(8.2)]{peetre}.

Finally, we prove the technical result, Theorem \ref{prop-1-2-ch3}, in appendix.

\section{Preliminaries}

Let  $\sph=\{ x\in \mathbb{R}^{d}:\   \   \|x\|=1\}$ denote the
unit sphere of $\RR^d$ endowed  with the usual rotation-invariant
 measure $d\sa(x)$, where, and in what follows,  $\|x\|$
denotes the Euclidean norm of $x\in \RR^d$. Let  $\rho(x,y):=\arccos (x\cdot y)$ denote
 the usual geodesic distance of $x, y\in \sph$, and
   $ B(x,r):=\{ y\in\sph: \ \rho(x,y)\leq r\}$    the spherical cap
 centered   at $x\in\sph$  of  radius $r\in (0,\pi]$.
  Given  a constant $c>0$,   we use the notation
$cB:=B(x, cr)$  to denote the spherical cap with the same center as that of $B:=B(x,r)$ but
$c$ times the radius of $B$.
Given a set
$E\subset \sph$, we denote by
 $\chi_E$  and  $|E|$  the characteristic function of $E$ and the
Lebesgue measure $\sa(E)$ of  $E$, respectively. We shall use the
notation $ A\sim B$ to mean that there exists an inessential
constant $c>0$, called the constant of equivalence, such that
$$ c^{-1} A \leq B \leq c A.$$
For $0<p\leq \infty$ and $f\in L^p(\sph)$,  we define
$$ E_n(f)_{p} =\inf_{g\in\Pi_n} \|f-g\|_{p},\    \
n=0,1,2,\ldots.$$

A spherical polynomial of degree at most $n$ on $\sph$ is
the restriction to $\sph$ of a polynomial in $d$ variables of total
degree at most $n$. We denote by $ \Pi_n^d$ the space of all  real
spherical polynomials of degree at most $n$ on $\sph$. It is a finite dimensional vector space over $\RR$ with
 $\dim \Pi_n^d\sim  n^{d-1}.$

 Let $\HH_0^d$ denote the space of constant functions on $\sph$.
   For each positive integer $n$, we denote by  $\HH_n^d$  the
orthogonal complement of $\Pi_{n-1}^d$ in  $\Pi_n^d$
with respect to the inner product of $L^2(\sph)$.  $\HH_n^d$ is called the space of spherical harmonics of degree $n$ on $\sph$. Thus,
the spaces $\HH_n^d$, $n=0,1,\cdots$ of spherical harmonics are mutually orthogonal with respect to the inner product of $L^2(\sph)$, and for each $n\in\NN$,   $
\mathrm{dim}\, \mathcal{H}_n^d=\text{dim}\, \Pi_n^d-\text{dim}\Pi_{n-1}^d
\sim  n^{d-2}$. Since the space of spherical polynomials is dense in $L^2(\sph)$,    each $f\in L^2(\sph)$ has a spherical harmonic  expansion: \begin{equation}\label{s-expansions}
f=\sum_{k=0}^\infty \proj_k f,\end{equation}
   where $\proj_k$ is the orthogonal projection of $L^2(\sph)$ onto the space $\HH_k^d$ of spherical harmonics, which has an integral representation: \begin{equation}\label{2-2} \proj_kf(x) =\f { \Ga (\f {d-1}2)}{ \Ga (d-1) |\sph|}\int_{\sph} f(y) E_k^{(\f {d-3}2, \f
{d-3}2)}(x\cdot y)\, d\sa(y),\ \  \ x\in\sph. \end{equation}
Here and elsewhere, we write
\begin{align}
 E_k ^{(\al, \be)} (t) &:=
\f { (2k+\al+\be +1)
\Ga(k+\al+\be +1)} {\Ga (k+\be +1)}P_k^{(\al,\be)}(t)\label{2-1}\\
&=c_{\a,\b} P_n^{(\a,\b)}(1)\|P_n^{(\a,\b)}\|_{2,\a,\b}^{-2}P_n^{(\a,\b)}(t),\notag
 \end{align}
where    $P_k^{(\al,\be)}$ is  the usual Jacobi polynomial of degree $k$ and   indices
$\al, \be$, as defined in \cite[Chapter IV]{Sz}, and \begin{equation}\label{Lebesgue-norm}
\|g\|_{p,\a,\b}:=\Bigl( \int_0^\pi |g(\cos\t)|^p (\sin\t/2)^{2\a+1}(\cos\t/2)^{2\b+1}\, dt\Bigr)^{\f1p}, \   0<p<\infty
\end{equation}
 for   $g:[-1,1]\to\RR$. Furthermore, throughout the paper, we always assume that $\a\ge\b\ge -\f12$.

 Using  (\ref{2-2}), one can extend the definition of $\proj_k$ to the whole space $L^1(\sph)$
so that there is a spherical harmonic expansion $f\backsimeq \sa(f):=\sum_{k=0}^\infty \proj_k(f)$
 associated to each $f\in L^1(\sph)$.
 The series $\sa(f)$ is called the Fourier-Laplace series of $f$ on $\sph$.  In the case
of  $d=2$, this is simply  the usual Fourier  series  of $2\pi$-periodic functions.
If $d\ge 3$, then given  any  $1\leq p\neq 2\leq \infty$,
there always exists a function $f\in L^p(\sph)$ such that the partial sum of the   Fourier-Laplace series
$\sa(f)$ does not converge in $L^p(\sph)$ (see \cite{BC}).  An important tool for the
investigation of summability of the series $\sa(f)$  is
to use the Ces\`aro means of $\sa(f)$, whose definition will be given below.

  {\it The Ces\`aro means of
  $\sigma(f)$ of order $\delta>-1$}  are  defined as usual by
\begin{equation}\label{cesaro:cho}\sigma_n^\delta(f):=\sum_{k=0}^n
\frac{A_{n-k}^\delta}{A_n^\delta} \proj_k(f),\   \   \ n=0,1,\cdots,
\end{equation}
 where  $A_k^\delta=\frac {\Gamma(k+\delta+1)} {\Gamma(k+1)\Gamma(\delta+1)}.$
 It is known that if $\d>\l:=\f{d-2}2$, and $f\in L^p(\sph)$ for $1\leq p<\infty$ or $f\in C(\sph)$ for $p=\infty$, then
  \begin{equation} \lim_{n\to\infty}\|\s_n^\d f-f\|_p =0.\end{equation}
  This result, in particular, implies that
  if $f, g\in L^1(\sph)$ satisfies  $\proj_j f=\proj_j g$ for all $j\ge 0$ then one must have  $f=g$.

   Another approach to spherical harmonic analysis is through the Laplace-Beltrami operator $\Delta_0$ on $\sph$ defined by
   \begin{equation}\label{2-6-eq} \Delta_0 f
:=\sum_{j=1}^d  \f{\p^2 F}{\p
x_j^2} \Bigl|_{\sph},
\   \  \text{ with  $F(y):=f\big(\frac{y}{|y|}\big)$}.\end{equation}
Indeed, each space   $\HH^d_k$ is  the space of eigenfunctions of  $\Delta_0$ corresponding to the
eigenvalue $-\l_k=-k(k+d-2)$; namely,
\begin{equation}\HH^d_k =\Bl\{ f\in C^2(\SS^{d-1}):\ \
\Delta_0 f =-\l_k f\Br\},\  \ k=0,1,\cdots.\end{equation}
  Therefore, spherical harmonic polynomial  expansions are simply the eigenvalue expansions of $\Delta_0$.

Given $r>0$,  we define the {\it fractional Laplace-Beltrami } operator  $(-\Delta_0)^{r}$  in a distributional  sense
  by
\begin{equation}\label{1:ch0} \proj_k\Bigl[(-\Delta_0)^r f \Bigr]= (k(k+d-2))^{r} \proj_k(f),\    \   \  k=0,1,\cdots.
 \end{equation}
 Clearly, if $r=1$, this definition coincides with the definition given in \eqref{2-6-eq}.

   Let $\eta$ be a nonnegative $C^\infty$-function on $\RR$
with the properties that $\eta(x)=1$ for $|x|\leq 1$ and
$\eta(x)=0$ for $|x|\ge 2$.   For each integer $n\ge 1$,
the generalized de la Vall\'ee Poussin operator is defined by
\begin{equation}\label{2-3} V_n f(x)=\sum_{k=0}^{2n} \eta(\f k n)\proj_k f(x)=\int_{\sph} f(y) K_n(x\cdot y)\, d\sa(y),\   \
x\in\sph,\end{equation}
where
\begin{equation}\label{1-9-TD}K_n(t)=C_d\sum_{k=0}^{2n} \eta(\f kn)
E_k^{(\f{d-3}2, \f{d-3}2)}(t),     \    \  t\in[-1,1].\end{equation}
We will keep the notations $\eta$, $V_n$ and $K_n$ for the rest of the paper.

It turns out that the kernel $K_n$  in \eqref{1-9-TD} is highly localized at the point $t=0$, as was shown in Lemma \ref{lem-1-1-sec1} below.
To be more precise, we define,
   for a smooth cutoff function  $\vi: [0,\infty)\to\CC$,
\begin{align}
&B_{N,\vi}^{(\a,\b)} (t):= \sum_{k=0}^{\infty}\vi( \f k N)   E_k^{(\a,\b)}
(t).\label{1-10-ch3}\end{align}
Then
the  following   pointwise estimates of the kernels
$B_{N,\vi}^{(\a,\b)}$ were known (
 \cite[Lemma 3.3]{BD} and \cite[Theorem 2.6]{IPX}):
\begin{lem}\label{lem-1-1-sec1}     Let   $\vi\in C^{3\ell-1}[0,\infty)$ be such that  $\text{supp}\, \vi\subset [0,2]$ and   $\vi^{(j)}(0)=0$ for $j=1,2,\cdots, 3\ell-2$.  Then for the kernel  function
$B_{N} \equiv B_{N,\vi}^{(\a,\b)} $   defined by \eqref{1-10-ch3} with $\a\ge \b\ge -1/2$,
\begin{equation}\label{1-2-ch3}
|B_{N} ^{(i)} (\cos\t) |\leq C_{\ell,i,\a} \|\vi^{(3\ell-1)}\|_\infty   N^{ 2\a+2i+2}
(1+N\t)^{-\ell},\    \    i=0,1,\cdots,\end{equation} where $\t\in [0,\pi]$, $N\in\NN$,
$ B_N^{(0)}(t)=
B_{N,\vi}^{(\a,\b)}(t)$ and $B_{N}^{(i)}(t)=
\(\f d{dt}\)^i \{
B_{N,\vi}^{(\a,\b)}(t)\}$ for $i\ge 1$.\end{lem}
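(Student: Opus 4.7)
The plan is to split the range $\theta\in[0,\pi]$ at the natural scale $\theta\sim 1/N$ and treat the two subranges by different devices.

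For the diagonal range $N\theta\le 1$, the localizing factor $(1+N\theta)^{-\ell}$ is comparable to $1$, so only the trivial size bound $|B_N^{(i)}(\cos\theta)|\le C N^{2\a+2i+2}$ is needed. I would combine the standard peak estimate $|P_k^{(\a,\b)}(t)|\sim k^{\a}$ (valid for $|1-t|\le C/k^2$) with the differentiation rule $\tfrac{d}{dt}P_k^{(\a,\b)}(t)=\tfrac{k+\a+\b+1}{2}P_{k-1}^{(\a+1,\b+1)}(t)$, iterated $i$ times. With the normalization in \eqref{2-1} this yields $|(E_k^{(\a,\b)})^{(i)}(\cos\theta)|\le C k^{2\a+2i+1}$ uniformly in the diagonal range, and summing over $0\le k\le 2N$ (permitted by $\supp\varphi\subset[0,2]$) produces the claimed bound.

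For the oscillatory range $N\theta>1$, I would extract the decay $(N\theta)^{-\ell}$ by iterated Abel summation, driven by a three-term identity of the shape
\[
(1-\cos\theta)\,E_k^{(\a,\b)}(\cos\theta)=\sum_{j=-1}^{1}\gamma_{k,j}\,E_{k+j}^{(\a,\b)}(\cos\theta),\qquad |\gamma_{k,j}|\le C,
\]
which is just the three-term recurrence for Jacobi polynomials rewritten in the $E_k$-normalization. Substituting into $B_N$ and re-indexing converts the coefficient $\varphi(k/N)$ into a discrete difference of order two in $k$, whose supremum norm is controlled by $C N^{-2}\|\varphi''\|_\infty$ through the smoothness of $\varphi$; the vanishing conditions $\varphi^{(j)}(0)=0$ eliminate boundary contributions at $k=0$. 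Iterating this identity enough times (against the factor $(1-\cos\theta)^{\ell}\sim \theta^{2\ell}$) gains the overall factor $(N\theta)^{-\ell}$, and the hypothesis $\varphi\in C^{3\ell-1}$ leaves a comfortable margin to absorb the extra discrete derivatives generated by the $k$-dependence of $\gamma_{k,j}$ and by the index shifts introduced at each Abel pass.

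The derivative estimate for $i\ge 1$ I would reduce to the case $i=0$ via $\tfrac{d}{dt}E_k^{(\a,\b)}(t)=c_{k,\a,\b}\,E_{k-1}^{(\a+1,\b+1)}(t)$: applying this identity $i$ times rewrites $B_N^{(i)}$ as a kernel of the same shape with parameters $(\a+i,\b+i)$ and a new smooth cutoff $\tilde\varphi$ that still vanishes to sufficient order at $0$, so the $i=0$ conclusion with $2\a$ replaced by $2(\a+i)$ closes the estimate. The main technical hurdle is calibrating the three-term identity so that each Abel pass yields a clean $(N\theta)^{-1}$ gain rather than, say, a $(N\theta^2)^{-1}$ gain; once that identity and its $\ell$-fold iterate are in hand with controlled coefficients, the rest of the argument is careful but routine bookkeeping.
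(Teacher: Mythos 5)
The paper does not prove this lemma---it is cited from \cite[Lemma 3.3]{BD} and \cite[Theorem 2.6]{IPX}---so the relevant comparison is with the argument in those sources, which drives the Abel summation by the Szeg\H{o} identity displayed as \eqref{3-3}: $\sum_{j=0}^k E_j^{(\a,\b)} = \frac{1}{2k+\a+\b+2}E_k^{(\a+1,\b)}$. Each summation by parts raises $\a$ by one, replaces $\vi(k/N)$ by a first difference of the rational coefficient $\vi(k/N)/\prod_i(2k+c_i)$, and closes with the pointwise bound \eqref{3-4}. Because the rational denominators are degree one in $k$, controlling $\ell$ iterated differences of that coefficient costs roughly three derivatives of $\vi$ per pass, which is exactly why the hypothesis asks for $\vi\in C^{3\ell-1}$ with $\vi^{(j)}(0)=0$ up to order $3\ell-2$. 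Your route is genuinely different: you iterate a three-term relation $(1-\cos\t)E_k^{(\a,\b)}=\sum_{j=-1}^{1}\gamma_{k,j}E_{k+j}^{(\a,\b)}$ and trade the geometric factor $(1-\cos\t)^m\sim\t^{2m}$ for an order-$2m$ discrete difference of the coefficient. In the Chebyshev case $\a=\b=-\tfrac12$ this is clean: the relation reads $(1-\cos\t)E_k=E_k-\tfrac12 E_{k+1}-\tfrac12 E_{k-1}$ with exact constant coefficients, so re-indexing produces the honest second difference $-\tfrac12\Delta^2\vi(k/N)=O(N^{-2}\|\vi''\|_\infty)$.

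For general $(\a,\b)$, however, that very step is where the work lives and you leave it open. The $\gamma_{k,j}$ in the $E_k$-normalization are not constant, and the re-indexed coefficient $\gamma_{k+1,-1}\vi\bigl(\tfrac{k+1}{N}\bigr)+\gamma_{k,0}\vi\bigl(\tfrac{k}{N}\bigr)+\gamma_{k-1,1}\vi\bigl(\tfrac{k-1}{N}\bigr)$ is a genuine second difference only modulo $k$-dependent corrections; one needs, roughly, $\gamma_{k+1,-1}+\gamma_{k,0}+\gamma_{k-1,1}=O(k^{-2})$ and $\gamma_{k+1,-1}-\gamma_{k-1,1}=O(k^{-1})$ in this normalization, and you do not establish this. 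You flag it yourself as the \emph{main technical hurdle}, but identifying a hurdle is not the same as clearing it. Your accounting of the per-pass gain is also internally inconsistent: a factor $(1-\cos\t)\sim\t^2$ against an $O(N^{-2})$ coefficient yields $(N\t)^{-2}$ per pass, not $(N\t)^{-1}$; taken at face value the scheme would prove a stronger decay than \eqref{1-2-ch3} from fewer than $3\ell-1$ derivatives, which is itself a warning that the naive telescope does not work uncorrected. Finally, the boundary at small $k$, where the recurrence degenerates (there is no $E_{-1}$) and the $\gamma_{k,j}$ do not stabilize, is only gestured at; showing that the vanishing conditions $\vi^{(j)}(0)=0$ absorb those terms is a separate computation you would still owe. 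In short, this is a coherent plan along a road different from the one the paper cites, but the two places where the real difficulty sits are precisely where the proposal stops.
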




We conclude this section with a technical theorem, which gives a sharp asymptotic estimate of the weighted $L^p$ norm of
the following kernel function:
\begin{align}
&G_{n,r}^{(\a,\b)} (t):= \sum_{k=0}^{\infty}\eta( \f k n) (k(k+\a+\b+1))^{\f r2} E_k^{(\a,\b)}
(t),\   \  r\ge 0.\label{1-1-ch3}   \end{align}
 For simplicity, we will   write $G_{n,r}$ for $G_{n,r}^{(\a,\b)}$, and
  $G_n$ for $G_{n,0}$, whenever $\a,\b$ are understood  and no confusion is possible from the context.
Recall that the norm $\|g\|_{p,\a,\b}$ is  defined by
 \eqref{Lebesgue-norm}.
\begin{thm}\label{prop-1-2-ch3} Let $G_{n,r}\equiv G_{n,r}^{(\a,\b)}$ be defined by \eqref{1-1-ch3}, and let $0<p< 1$ and $r>0$. Assume that
$r$ is not an even integer if $\a+\b+1>0$, and $r$ is not an integer if $\a+\b+1=0$. Then
\begin{equation}\label{3-7}
\f{\|G_{n,r} \|_{p, \a,\b} }{\|G_n\|_{p,\a,\b}}\sim \begin{cases} n^r,\    \   &\text{if $r>(2\a+2)(\f 1p-1)$},\\
n^{(2\a+2)(\f 1p-1)},\   \   &\text{if $r<(2\a+2)(\f 1p-1)$},\\
n^{(2\a+2)(\f 1p-1)}\log^{\f 1p} n,\    \   & \text{if $r=(2\a+2)(\f 1p-1)$.}\end{cases}
\end{equation}
\end{thm}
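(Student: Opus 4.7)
The plan is to reduce \eqref{3-7} to the pointwise asymptotic
\begin{equation*}
|G_{n,r}(\cos\t)|\sim n^{r+2\a+2}(1+n\t)^{-\ell}+\t^{-r-2\a-2}\chi_{[c/n,\pi/2]}(\t),
\end{equation*}
valid for arbitrarily large $\ell$ and some constant $c>0$, and then to integrate it against the Jacobi weight $(\sin\t/2)^{2\a+1}(\cos\t/2)^{2\b+1}$. The ``reference'' bound $\|G_n\|_{p,\a,\b}\sim n^{(2\a+2)(1-1/p)}$ is the case $r=0$: since $\eta\in C^\infty$ and $\eta\equiv 1$ near $0$, every derivative of $\eta$ vanishes at $0$, so Lemma \ref{lem-1-1-sec1} applies to $\vi=\eta$ with arbitrary $\ell$ and yields $|G_n(\cos\t)|\leq C_\ell n^{2\a+2}(1+n\t)^{-\ell}$; a matching lower bound on $\t\in[0,c/n]$ follows from $G_n(1)\sim n^{2\a+2}$ (Stirling on the leading coefficient) together with a Bernstein bound on $G_n'$ near $\t=0$, and the change of variable $u=n\t$ integrates this to $\|G_n\|_{p,\a,\b}\sim n^{(2\a+2)(1-1/p)}$.

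The bulk of the work is the pointwise analysis of $G_{n,r}$. Writing $(k(k+\a+\b+1))^{r/2}=n^r\psi_n(k/n)$ with $\psi_n(x):=\bigl(x(x+(\a+\b+1)/n)\bigr)^{r/2}$, I would split
\begin{equation*}
\eta(x)\psi_n(x)=\eta(x)\psi_n(x)(1-\chi(x))+\eta(x)\psi_n(x)\chi(x)=:\psi^{\sharp}_n(x)+\psi^{\flat}_n(x),
\end{equation*}
where $\chi\in C^\infty[0,\infty)$ equals $1$ near $0$ and vanishes on $[1,\infty)$. Because $\psi^{\sharp}_n$ vanishes in a neighbourhood of $0$, all its derivatives at $0$ are zero, and Lemma \ref{lem-1-1-sec1} applied to $\psi^{\sharp}_n$ produces the first (``main peak'') term $n^{r+2\a+2}(1+n\t)^{-\ell}$ with arbitrary $\ell$. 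The delicate piece is the singular part $\psi^{\flat}_n$, which behaves like $x^r$ near $0$. Here I would invoke the Darboux/Hilb asymptotic for $E_k^{(\a,\b)}(\cos\t)$ valid on $[c/k,\pi-c/k]$, namely $E_k^{(\a,\b)}(\cos\t)=c_\a k^{\a+1/2}\t^{-\a-1/2}\cos(k\t+\phi_{\a,\b})+\text{(lower order)}$, to reduce $\sum_k\psi^{\flat}_n(k/n)E_k^{(\a,\b)}(\cos\t)$ to oscillatory sums $\sum_k \omega(k/n)k^{s}\cos(k\t+\phi)$ with $s=r+\a+1/2$; a Poisson/Mellin analysis of these sums shows that as $n\t\to\infty$ their leading contribution is of order $\t^{-s-1}\cdot\t^{-\a-1/2}=\t^{-r-2\a-2}$ with a \emph{non-zero} coefficient, and this is precisely where the hypothesis $r\notin 2\NN$ (respectively $r\notin\NN$ in the case $\a+\b+1=0$, where $(k(k+\a+\b+1))^{r/2}=k^r$) is used to ensure that the relevant Mellin pole does not cancel.

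With the pointwise asymptotic in hand, the norm integral splits as
\begin{equation*}
\|G_{n,r}\|_{p,\a,\b}^p\sim\int_0^{c/n}n^{(r+2\a+2)p}\,\t^{2\a+1}\,d\t+\int_{c/n}^{\pi/2}\t^{(2\a+1)-(r+2\a+2)p}\,d\t,
\end{equation*}
the factor $(\cos\t/2)^{2\b+1}$ being harmless away from $\t=\pi$. The first integral equals $c\,n^{pr+(2\a+2)(p-1)}$. The second is bounded as $n\to\infty$ when $r<(2\a+2)(1/p-1)$, of order $\log n$ when $r=(2\a+2)(1/p-1)$, and matches the first (up to a multiplicative constant) when $r>(2\a+2)(1/p-1)$. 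Taking $p$-th roots and dividing by $\|G_n\|_{p,\a,\b}\sim n^{-(2\a+2)(1/p-1)}$ yields the three regimes of \eqref{3-7}.

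The main obstacle is executing the Mellin/stationary-phase analysis of $\psi^{\flat}_n$ with enough precision to obtain a two-sided pointwise estimate: a one-sided upper bound would not suffice, because in the regime $r<(2\a+2)(1/p-1)$ the conclusion is driven by the \emph{lower} bound $|G_{n,r}(\cos\t)|\geq c\,\t^{-r-2\a-2}$ on a set of positive measure, which forces one to track the nonvanishing of the leading oscillatory coefficient. Extracting the precise logarithm in the borderline case is delicate for the same reason, and the restrictions on $r$ in the statement correspond exactly to the configurations of that Mellin expansion in which the critical coefficient does not degenerate.
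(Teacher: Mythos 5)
Your overall architecture matches the paper's: show $\|G_n\|_{p,\a,\b}\sim n^{(2\a+2)(1-1/p)}$, reduce \eqref{3-7} to a two-sided pointwise estimate for $G_{n,r}$, then integrate against the Jacobi weight. The upper bound via Lemma \ref{lem-1-1-sec1} applied dyadically (which is essentially what the $\psi^{\sharp}$/$\psi^{\flat}$ split accomplishes) is fine, and you correctly isolate the crux: a \emph{lower} bound $|G_{n,r}(\cos\t)|\gtrsim\t^{-(r+2\a+2)}$ on $\t\in[c/n,\va]$, with the hypothesis $r\notin 2\NN$ (resp.\ $r\notin\NN$ when $\a+\b+1=0$) entering precisely to guarantee nonvanishing of the leading coefficient.

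However, the step you propose for that lower bound --- Darboux/Hilb asymptotics for $E_k^{(\a,\b)}(\cos\t)$ followed by a Mellin or Poisson-summation analysis of the resulting oscillatory sums --- is a genuinely different route from the paper's, and as presented it has a real gap. The paper (Lemma \ref{lem-3-4-TD}) avoids oscillation altogether: it uses identity \eqref{3-3} to perform summation by parts $\ell$ times, shifting $(\a,\b)\mapsto(\a+\ell,\b)$, and then $v+1$ further summations by parts to express $G_{n,r}$ as a linear combination of Ces\`aro kernels $S_k^{v,(\a+\ell,\b)}$. The decisive facts are then (i) the explicit computation that the iterated differences of the coefficient sequence have leading term $c_{v,\ell}k^{r-2\ell-v-1}$ with $c_{v,\ell}$ proportional to $r(r-2)\cdots(r-2\ell+2)$, which is nonzero exactly when $r\notin 2\NN$; and (ii) the Askey--Gasper positivity $S_k^{v,(\a+\ell,\b)}\ge 0$ for $v\ge\a+\b+2\ell+2$, which turns the main sum into a sum of positive terms and yields the lower bound by restricting to $k\leq\t^{-1}/2$ and using \eqref{lowerbound}. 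In your approach the nonvanishing is deferred to a ``Mellin pole does not cancel'' claim, but the coefficient emerging from the Darboux phase $\cos(k\t+\gamma_{\a,\b})$ is a trigonometric expression in $r$, $\a$, $\b$ whose zero set does not obviously coincide with $2\NN$; a careful bookkeeping of the lower-order Darboux terms and of the boundary contributions in the Mellin contour shift would be needed to recover exactly the condition $r\notin 2\NN$, and none of this is carried out. Moreover, a raw oscillatory sum has zeros, so you would additionally need to show that the non-oscillatory (Mellin-pole) term dominates the oscillatory remainder uniformly on $[c/n,\va]$, again something the Ces\`aro-positivity route gives for free. In short: correct skeleton, correct identification of what must be proved and where the arithmetic hypothesis on $r$ must enter, but the central lower-bound step is only a plan, and the mechanism you propose for the nonvanishing does not transparently produce the stated condition.
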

Theorem \ref{prop-1-2-ch3}
  will play a crucial role in the proof of Theorem \ref{sharp-bernstein}, whereas   its proof
 is quite technical. To avoid   interruption of our later discussion of various polynomial inequalities,
 we  postpone the proof of this theorem
  to the appendix section.

   More results on spherical harmonic expansions can be found in
  the book \cite{WL}.

 \section{Weighted polynomial inequalities}

In this section, we will review some known facts and results concerning  doubling weights, which will be useful in the remaining sections of the paper.
 \subsection{Doubling weights and properties}

Given a weight function $w$ on $\sph$,
we
write $ w(E):=\int_{E}w(x)\, d\sa(x)$ for a measurable $E\subset \sph$,
and
denote by $L_{p,w}\equiv L_{p,w}(\sph)$ the space of all real
  functions $f$ on $\sph$ with finite quasi-
norm
$$\|f\|_{p,w}:=\begin{cases}
\(\int_{\sph} |f(x)|^p w(x)\, d\sa(x)\)^{\f1p},&\   \    0<p<\infty,\\
\underset{x\in \sph}{\operatorname{esssup}} \, |f(x)|,&\   \
p=\infty.\end{cases}$$

A   weight function $w$  on $\sph$ is  said to satisfy {\it the doubling condition} if
there exists a constant $L>0$  such that
\begin{equation}\label{1-1} w(2B) \leq L
w(B)\   \  \  \text{for all spherical caps $B\subset \sph$}
,\end{equation}  where the least constant $L$  is called the
doubling constant of $w$, and is denoted by $L_w$. Following \cite{MT2},   we set, for  a given
doubling weight $w$ on $\sph$,
\begin{equation}\label{1--3} w_n(x) = n^{d-1} \int_{B(x,\f1n)} w(y)\, d\sa(y),\    \
n=1,2,\ldots, \   \  \text{and}\  \  w_0(x)=w_1(x). \end{equation}

 Define
\begin{equation}\label{2-3-ch3}
s_w':=\inf\Big\{ s\ge 0:\   \  \sup_{m\in\NN} \sup_B \f { w(2^m B)}{2^{ms}w(B)} <\infty\Big\},
\end{equation}    where the second  supremum on the right
 is  taken over all spherical caps $B\subset \sph$.
It can be shown that the number $s_w'$  exits and satisfies
$$
 \overline{\lim_{m\to\infty}} \f 1m{\log_ 2}\(\sup_B \f { w(2^m
B)}{w(B)}\)\leq s_w' \leq \f {\log L_w}{\log 2}.
$$

  We remark  that in many cases the infimum in \eqref{2-3-ch3} is attained at $s_w'$ and is computable. Taking the simple case
\begin{equation}\label{weight}w(x)=|x_1|^{\al_1}\cdots |x_d|^{\al_d},\   \    \min_{1\leq j\leq d}\a_j \ge 0\end{equation} for example, one has (see, e.g., \cite[(1.9)]{DW})
    $$s_w':= d-1 + \sum_{j=1}^d \al_j -\min_{1\leq j\leq d} \al_j.$$

From now on, we always assume that   $w$ is a doubling weight  on $\sph$   normalized by $\int_{\sph} w(y)\, d\s(y)=1$, we set  $s_w=s_w'$ if the infimum in \eqref{2-3-ch3} is achieved at $s_w'$, and otherwise, we  set $s_w$ to be a fixed constant satisfying $s_w'< s_w\leq {\log L_w}/{\log 2}$.
Unless otherwise stated, all  general constants $C$ below  depend only on $L_w$,  and the expression
$$\sup_{m\in\NN} \sup_B \f { w(2^m B)}{2^{ms_w}w(B)}$$
 whenever a doubling weight is involved.

 Using \eqref{1--3} and \eqref{2-3-ch3}, one can easily seen that
$$
w_n(x) \leq C 2^{s_w}( 1+n \rho(x, y ))^{s_w} w_n(y),\    \
\text{ $x,y\in\sph$,\   \  $n\ge 0$.}
$$

The following lemma collects some useful properties on doubling weights:

 \begin{lem}\label{lem-2-1-ch3}\cite[Section 2]{Dai1}
 Let $w$ be a doubling weight on $\sph$.

 (i) If $0<r<t$ and $x\in\sph$, then
 \begin{equation}\label{2-5-0-ch3} w( B( x,t)) \leq  C \Bigl( \f tr\Bigr)^{s_w} w (B (x,r)).\end{equation}

 (ii) For $x, y\in\sph$ and $n=0,1,\cdots$,
 \begin{equation}\label{2-4-ch3}w_n(x) \leq C ( 1+n \rho(x, y ))^{s_w} w_n(y).\end{equation}
 \end{lem}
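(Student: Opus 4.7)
The plan is to derive both assertions from a single reservoir, namely the defining property of $s_w$ recorded after \eqref{2-3-ch3}: there exists a finite constant $M$ with $w(2^m B) \le M\, 2^{m s_w} w(B)$ for every spherical cap $B$ and every $m \in \NN$. Both parts are short reductions to this one inequality.

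For (i), I would choose the smallest integer $m \ge 0$ with $2^m r \ge t$; since $r < t$ this forces $m \ge 1$, and the minimality of $m$ gives $2^m \le 2(t/r)$. The inclusion $B(x,t) \subseteq B(x, 2^m r) = 2^m B(x,r)$ combined with monotonicity of $w$ and the defining property of $s_w$ then produces $w(B(x,t)) \le M\, 2^{m s_w} w(B(x,r))$, from which the asserted factor $(t/r)^{s_w}$ pops out with constant $M \cdot 2^{s_w}$. For (ii) the strategy is to reduce to (i). Assume first $n \ge 1$ and $x \ne y$; the triangle inequality on $\sph$ yields $B(x, 1/n) \subseteq B(y, 1/n + \rho(x,y))$, so applying (i) at the center $y$ with radii $1/n < 1/n + \rho(x,y)$ produces the factor $(1 + n\rho(x,y))^{s_w}$ in front of $w(B(y,1/n))$. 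Multiplying through by $n^{d-1}$ converts the $w$-measures of $1/n$-caps into $w_n$-values and closes the case $n \ge 1$. The edge case $x = y$ is trivial, and $n = 0$ reduces to $n = 1$ since $w_0 \equiv w_1$ and $\rho(x,y) \le \pi$ absorbs harmlessly into the constant.

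There is no substantive obstacle here: the entire argument is an unwinding of the definition of $s_w$, once for (i) and then via (i) for (ii). The only point requiring a moment of care is the dyadic extraction of $m$ in (i), which must be carried out so that the resulting constant is uniform in the cap $B(x,r)$ and in the ratio $t/r$; this is precisely why the constant $M$ in the characterization of $s_w$ is taken from a supremum over \emph{all} $m$ and \emph{all} caps.
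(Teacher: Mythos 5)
Your argument is correct: both parts do follow by directly unwinding the finiteness of $\sup_{m\in\NN}\sup_B w(2^m B)/(2^{ms_w}w(B))$, with the dyadic choice $2^{m-1}r < t \le 2^m r$ for (i) and the inclusion $B(x,1/n)\subseteq B(y,1/n+\rho(x,y))$ reducing (ii) to (i). The paper cites \cite[Section 2]{Dai1} for this lemma rather than proving it, but the route you take is the standard one for doubling weights and is what that reference uses.
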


 The following theorem was proved in  \cite[Corollary 3.4]{Dai1}.

 \begin{thm}\label{cor-3-4-TD}
For $f\in\Pi_n^d$ and $0<p<\infty$,
$$ C^{-1} \|f\|_{p,w_n} \leq \|f\|_{p,w} \leq C \|f\|_{p,w_n},$$
where $C>0$ depends only on $d$, $L_w$ and $p$ when $p$ is small.
\end{thm}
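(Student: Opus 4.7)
The plan is to discretize both norms against a common skeleton: a maximal $1/n$-separated set $\{y_j\}_{j=1}^M\subset\sph$ together with an associated partition $\{A_j\}_{j=1}^M$ of $\sph$ satisfying $B(y_j,c_1/n)\subset A_j\subset B(y_j,c_2/n)$ for fixed absolute constants $0<c_1<c_2$; a standard volume count gives $M\sim n^{d-1}$. The goal is to show that both $\|f\|_{p,w}^p$ and $\|f\|_{p,w_n}^p$ are equivalent to the same discrete sum $\sum_j |f(y_j)|^p\, n^{-(d-1)}\, w_n(y_j)$.

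The first main ingredient is a Nikolskii/Remez-type estimate saying that, for $f\in\Pi_n^d$ and $0<p<\infty$, $|f|^p$ is essentially constant on caps of radius $\sim 1/n$: more precisely,
\begin{equation*}
\max_{x\in A_j}|f(x)|^p \;\leq\; C\,\min_{x\in A_j}|f(x)|^p \;\sim\; \frac{1}{|A_j|}\int_{A_j}|f(x)|^p\,d\sa(x),
\end{equation*}
with $C$ independent of $n$ and $j$ (but depending on $p$ when $p$ is small). This can be derived using the reproducing identity $f=f\ast K_n$ and the highly localized kernel estimates from Lemma \ref{lem-1-1-sec1}, by controlling $|f(x)-f(y_j)|$ on $A_j$ through the gradient bound coming from $K_n^{(1)}$ and then passing to an $L^p$ comparison via a standard Christ--Kiselev/Nikolskii argument (for $p<1$ one first raises to a reference power $q>1$ and uses the $L^q$--$L^p$ Nikolskii inequality on the sphere).

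The second main ingredient is control of $w_n$ on small caps using Lemma \ref{lem-2-1-ch3}: for $x\in A_j$,
\begin{equation*}
w_n(x)\;\sim\; w_n(y_j),\qquad w(A_j)\;\sim\; w\bigl(B(y_j,1/n)\bigr)\;=\; n^{-(d-1)}\, w_n(y_j),
\end{equation*}
and hence $\int_{A_j} w_n\,d\sa\sim |A_j|\,w_n(y_j)\sim n^{-(d-1)}w_n(y_j)$.

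Combining the two ingredients on each cell $A_j$ yields
\begin{equation*}
\|f\|_{p,w}^p\;\sim\;\sum_{j=1}^M|f(y_j)|^p\,w(A_j)\;\sim\;\sum_{j=1}^M |f(y_j)|^p\,n^{-(d-1)}w_n(y_j)\;\sim\;\|f\|_{p,w_n}^p,
\end{equation*}
proving the equivalence. The main obstacle is the polynomial ``essentially constant on scale $1/n$'' estimate in the small-$p$ regime, where standard mean-value arguments fail and one must invoke a genuine Nikolskii-type inequality on the sphere; this is also the source of the quantitative dependence of $C$ on $p$ as $p\downarrow 0$. Everything else is a direct consequence of the doubling property of $w$ via Lemma \ref{lem-2-1-ch3}.
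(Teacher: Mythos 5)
The central claim in your ``first main ingredient'' is false. You assert a Harnack-type inequality $\max_{x\in A_j}|f(x)|^p\leq C\min_{x\in A_j}|f(x)|^p$ on every cap $A_j$ of radius $\sim 1/n$, with $C$ independent of $j$. A nonconstant spherical polynomial of degree $n$ has a nonempty zero set, so there are always cells $A_j$ on which $\min_{A_j}|f|=0$ while $\max_{A_j}|f|>0$; if the inequality held with a uniform $C$, any $f\in\Pi_n^d$ vanishing somewhere would be identically zero. The gradient argument you invoke only controls the oscillation of $f$ on $A_j$ by $Cn\cdot(\mathrm{rad}\,A_j)\cdot\|f\|_\infty$, which involves the \emph{global} sup norm, not the local value; it cannot prevent $f$ from crossing zero inside $A_j$. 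What is true (via Christoffel-function or localized-kernel estimates) is the one-sided local Nikolskii bound $\sup_{x\in A_j}|f(x)|^p\leq C|A_j|^{-1}\int_{2A_j}|f|^p\,d\sa$, but the reverse comparison $\mathrm{Avg}_{A_j}|f|^p\leq C|f(y_j)|^p$ fails cell by cell and only holds after summing over all $j$ -- that is the genuinely nontrivial Marcinkiewicz--Zygmund sampling inequality (Lemma~\ref{lem-2-1-TD}), which is not a consequence of a pointwise oscillation estimate.

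Because of this, your discretization $\|f\|_{p,w}^p\sim\sum_j|f(y_j)|^p w(A_j)$ is correct but does not follow from the argument you give; you need either to quote Lemma~\ref{lem-2-1-TD} outright (and be careful this is not circular with Dai's Corollary~3.4, since both come from the same source), or to use the maximal-function route. The paper itself does not reprove this theorem -- it cites \cite[Corollary 3.4]{Dai1} -- but the mechanism there is the maximal function $f^\ast_{\xi,n}$ of Theorem~\ref{cor-3-2-ch3}. For example, for $\rho(x,z)\le 1/n$ one has $|f(x)|\le 2^\xi f^\ast_{\xi,n}(z)$, so $|f(x)|^p\lesssim n^{d-1}\int_{B(x,1/n)}(f^\ast_{\xi,n})^p\,d\sa$; multiplying by $w(x)$, integrating, and applying Fubini gives $\|f\|_{p,w}^p\lesssim\|f^\ast_{\xi,n}\|_{p,w_n}^p\lesssim\|f\|_{p,w_n}^p$, with the reverse direction proved symmetrically from $\int|f|^p w_n=n^{d-1}\iint_{\rho(x,y)\le 1/n}|f(x)|^p w(y)$. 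The second half of your write-up (comparing $w$ and $w_n$ at scale $1/n$ via Lemma~\ref{lem-2-1-ch3}) is fine, but the polynomial half needs to be replaced by one of the correct tools.
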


\subsection {A maximal function for spherical polynomials}

\begin{defn}\cite[(3.1)]{Dai1}
Given  $\xi >0$,   $f\in C(\sph)$ and $n\in\ZZ_+$, we define
\begin{equation}\label{3-2-ch3} f_{\xi,n}^\ast (x)=\max_{y\in\sph} |f(y)|( 1+n
 \rho(x,
 y))^{-\xi },\      \   x\in\sph.\    \
 \end{equation}
 \end{defn}

\begin{thm}\label{cor-3-2-ch3}\cite[Theorem 3.1]{Dai1}
If $ 0<p\leq \infty$, $ f\in\Pi^d_n$ and $\xi >\f{s_w}p$, then
 $$ \|f\|_{p,w} \leq \|f_{\xi,n}^\ast\|_{p,w} \leq C
 \|f\|_{p,w},$$where $C>0$ depends only on $d$, $L_w$ and $\xi$.
 \end{thm}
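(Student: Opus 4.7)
The lower bound $\|f\|_{p,w}\leq \|f_{\xi,n}^\ast\|_{p,w}$ is immediate since setting $y=x$ in the definition \eqref{3-2-ch3} gives $f_{\xi,n}^\ast(x)\ge |f(x)|$ pointwise. All the work goes into the upper bound.

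The plan is to reduce everything to a single pointwise polynomial inequality: for every $f\in \Pi_n^d$, $y\in\sph$, and every $\ell$ large enough,
\[
|f(y)|^p \le C\, n^{d-1}\int_{\sph} |f(z)|^p (1+n\rho(y,z))^{-\ell}\, d\sa(z). \tag{$\ast$}
\]
To prove $(\ast)$, I would use the reproducing identity $f=V_n f$ coming from the fact that $\eta\equiv 1$ on $[0,1]$, so that $f(y)=\int f(z)K_N(y\cdot z)d\sigma(z)$ for an appropriate $N\sim n$, combined with the localization estimate of Lemma \ref{lem-1-1-sec1}: $|K_N(y\cdot z)|\le C_\ell n^{d-1}(1+n\rho(y,z))^{-\ell}$. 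For $p\ge 1$ the inequality follows by H\"older applied to the (finite) measure $n^{d-1}(1+n\rho(y,z))^{-\ell}d\sigma(z)$. For $0<p<1$ the direct H\"older argument fails, and one uses a bootstrap: splitting $|f(z)|=|f(z)|^p|f(z)|^{1-p}$ and dominating the factor $|f(z)|^{1-p}$ by $(f_{\xi_0,n}^\ast(y))^{1-p}(1+n\rho(y,z))^{\xi_0(1-p)}$ for a suitable auxiliary $\xi_0$; the extra polynomial growth is absorbed by increasing $\ell$, and then the power of the maximal function is re-absorbed by taking the supremum and choosing parameters so the self-improving step is noncircular.

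Once $(\ast)$ is in hand, apply it at the point $y$ realizing (approximately) the supremum in the definition of $f_{\xi,n}^\ast(x)$, and use the elementary bound
\[
(1+n\rho(x,y))^{-\xi p}\le (1+n\rho(x,z))^{-\xi p}(1+n\rho(y,z))^{\xi p},
\]
which follows from $1+n\rho(x,z)\le (1+n\rho(x,y))(1+n\rho(y,z))$. Choosing $\ell>\xi p+(d-1)$ so the integral in $y$ is dominated independently of $y$, one obtains
\[
f_{\xi,n}^\ast(x)^p \le C\, n^{d-1}\int_{\sph} |f(z)|^p (1+n\rho(x,z))^{-\xi p}\, d\sa(z).
\]
Now integrate against $w(x)\,d\sa(x)$, switch the order of integration by Fubini, and decompose $\sph$ into dyadic annuli $A_j(z)=\{x:2^{j-1}/n\le \rho(x,z)\le 2^j/n\}$. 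On $A_j(z)$ the weight $(1+n\rho(x,z))^{-\xi p}$ is comparable to $2^{-j\xi p}$, so by Lemma \ref{lem-2-1-ch3}(i) and the definition of $s_w$,
\[
\int_{\sph}(1+n\rho(x,z))^{-\xi p} w(x)\,d\sa(x)\le C\, n^{-(d-1)} w_n(z)\sum_{j\ge 0} 2^{j(s_w-\xi p)}.
\]
The hypothesis $\xi>s_w/p$ is exactly what makes the geometric series converge, yielding $\|f_{\xi,n}^\ast\|_{p,w}^p\le C\|f\|_{p,w_n}^p$. A final appeal to \thmref{cor-3-4-TD} converts the $w_n$-norm back to the $w$-norm and finishes the proof.

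The delicate step is \emph{establishing} $(\ast)$ for $0<p<1$; the reproducing-kernel inequality only gives an $L^1$-type estimate, and upgrading it to an $L^p$-type estimate with $p<1$ is where one must invoke the (apparently self-referential) maximal function $f_{\xi_0,n}^\ast$ and argue that the resulting inequality can be made noncircular by choosing the auxiliary exponent $\xi_0$ first and $\ell$ large enough afterwards. Everything else---the dyadic decomposition, the use of the doubling exponent $s_w$, and the convergence condition $\xi>s_w/p$---is routine once $(\ast)$ is available.
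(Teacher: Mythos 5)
The paper itself does not prove Theorem~\ref{cor-3-2-ch3}; it is quoted verbatim from \cite[Theorem 3.1]{Dai1}, so there is no in-paper proof to compare against. That said, your outline is the standard kernel--localization argument and is essentially the one used in \cite{Dai1} (and, for trigonometric polynomials, in Mastroianni--Totik). The routine pieces are all in order: the reproducing identity $f=V_nf$ for $f\in\Pi_n^d$; the off-diagonal decay of the de la Vall\'ee Poussin kernel from Lemma~\ref{lem-1-1-sec1}; the inequality $1+n\rho(x,z)\le(1+n\rho(x,y))(1+n\rho(y,z))$; Fubini together with the dyadic-shell estimate of $\int_{\sph}(1+n\rho(x,z))^{-\xi p}w(x)\,d\sa(x)$ via Lemma~\ref{lem-2-1-ch3}(i), where $\xi p>s_w$ is exactly what makes the geometric series converge; and the final passage between $w$ and $w_n$ through Theorem~\ref{cor-3-4-TD}. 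The case $p=\infty$ is trivial.

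The one place that is under-specified, and where your parameter chasing is off, is the bootstrap for $0<p<1$. Your phrase ``the extra polynomial growth is absorbed by increasing $\ell$'' is the wrong knob. Start from $|f(y')|\le Cn^{d-1}\int|f(z)|(1+n\rho(y',z))^{-\ell}\,d\sa(z)$ at an arbitrary $y'$, split $|f(z)|=|f(z)|^p|f(z)|^{1-p}$ with $|f(z)|^{1-p}\le (f^\ast_{\xi_0,n}(y))^{1-p}(1+n\rho(y,z))^{\xi_0(1-p)}$, multiply both sides by $(1+n\rho(y,y'))^{-\xi_0}$, and convert $(1+n\rho(y',z))^{-\ell}\le(1+n\rho(y,y'))^{\ell}(1+n\rho(y,z))^{-\ell}$. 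Before taking $\sup_{y'}$ (which is what replaces $|f(y')|(1+n\rho(y,y'))^{-\xi_0}$ on the left by $f^\ast_{\xi_0,n}(y)$ and lets you divide by its $(1-p)$-th power), the factor $(1+n\rho(y,y'))^{\ell-\xi_0}$ must be bounded uniformly in $y'$, which forces $\ell\le\xi_0$. Thus you cannot make $\ell$ arbitrarily large with $\xi_0$ fixed; the admissible range is $\xi_0(1-p)\le\ell\le\xi_0$, and the optimal choice $\ell=\xi_0=\xi$ closes the bootstrap and yields directly
\[
\bigl(f^\ast_{\xi,n}(y)\bigr)^p\le C\,n^{d-1}\int_{\sph}|f(z)|^p(1+n\rho(y,z))^{-\xi p}\,d\sa(z),
\]
which is exactly the pointwise estimate your weighted integration step needs, without a separate triangle-trick step afterwards. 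With the constraint $\ell\le\xi_0$ made explicit (the genuine slack is in $\xi_0$, not in $\ell$), the argument is noncircular and your proposal is correct.
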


\subsection{Weighted cubature formulas and polynomial inequalities}

We start with the following definition.
\begin{defn} A  subset $\Lambda$ of \,$\sph$ is called
$\ve$-separated for some $\ve>0$  if
$ \rho( \o,
\o')\ge \ve$ for any two distinct points $\o,\o'\in\Lambda$.
A $\ve$-separated subset $\Lambda$ of
$\sph$ is called maximal if $\sph= \displaystyle \bigcup_{\o\in \Lambda} B (\o, \ve)$.
\end{defn}

From now on, let $\d_0$ be a sufficiently small constant depending only on $L_w$.

\begin{lem}\cite[Theorems 4.1]{Dai1}\cite[Lemma 3.4]{DW} \label{lem-2-1-TD}Given any   maximal $\f{\d} n$-separated  subset $\Ld$ of\, $
\sph$ with $\d\in (0,\d_0]$,  there exist positive numbers $\l_\o\sim w(B(\og, \f1N))$, $\og\in\Ld$, such that the following are true:
\begin{equation}\label{2-1-TD}
\int_{\sph} f(x) w(x)\, d\sa(x) =\sum_{\og\in\Ld} \ld_\og f(\og),\    \    \  \forall f\in \Pi_{4n}^d,
\end{equation} and
\begin{equation}\label{2-2-TD}
\|f\|_{p,w} \sim \begin{cases} \(\sum_{\og\in \Ld} \l_\og
|f(\og)|^p\)^{\f1p},&\  \  0<p<\infty,\\
\max_{\og\in \Ld} |f(\og)|,&\  \  p=\infty,
\end{cases}
\end{equation}
where  the constants of equivalence depend only on $L_w$, and
$p$ when $p$ is small.
\end{lem}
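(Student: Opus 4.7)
\emph{Proposal.} My plan is to first produce an \emph{approximate} cubature on $\Pi_{4n}^d$ by partitioning $\sph$ and integrating locally, then promote it to an exact cubature by a discrete Riesz-representation argument whose positivity requires $\d$ small, and finally deduce the Marcinkiewicz--Zygmund equivalence \eqref{2-2-TD} from the same local oscillation estimate together with \thmref{cor-3-2-ch3}.

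Let $\{R_\o\}_{\o\in\Ld}$ be a Borel partition of $\sph$ with $B(\o,\f{\d}{4n})\subset R_\o\subset B(\o,\f{2\d}{n})$ (e.g.\ Voronoi cells in the geodesic metric) and set $\mu_\o:=w(R_\o)$; the doubling condition together with \lemref{lem-2-1-ch3} gives $\mu_\o\sim w(B(\o,\f1n))$. The key local estimate is the tangential Bernstein inequality
\[ |g(x)-g(y)|\leq C n\rho(x,y)\,g_{\xi,n}^\ast(z),\qquad x,y,z\in B(z,\f{C}{n}),\ g\in\Pi_n^d,\ \xi>0,\]
which follows from the standard gradient bound on $\sph$ combined with definition \eqref{3-2-ch3}. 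Applied to $f\in\Pi_{4n}^d$ inside each $R_\o$, summed over $\o$, and combined with \thmref{cor-3-2-ch3} for $\xi>s_w$ (so that $\sum_\o\mu_\o f_{\xi,4n}^\ast(\o)\sim\|f_{\xi,4n}^\ast\|_{1,w}\leq C\|f\|_{1,w}$), this yields both
\[ \Bl|\int_{\sph} fw\,d\sa-\sum_{\o\in\Ld}\mu_\o f(\o)\Br|\leq C_0\d\,\|f\|_{1,w},\qquad f\in\Pi_{4n}^d,\]
and the approximate MZ inequality $\|f\|_{1,w}\sim\sum_{\o\in\Ld}\mu_\o|f(\o)|$ on $\Pi_{4n}^d$, valid whenever $\d\leq\d_0$.

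To pass from approximation to equality, endow $\Pi_{4n}^d$ with the discrete inner product $\la f,g\ra_\mu:=\sum_{\o\in\Ld}\mu_\o f(\o)g(\o)$, which by the analogue of the approximate MZ inequality for $p=2$ is comparable to the $L^2_w$--inner product; let $\vi\in\Pi_{4n}^d$ be the Riesz representer of $f\mapsto\int fw\,d\sa$. Then $\int fw\,d\sa=\sum_\o\mu_\o\vi(\o)f(\o)$, so setting $\l_\o:=\mu_\o\vi(\o)$ at once establishes \eqref{2-1-TD}. Positivity of $\l_\o$ and the size estimate $\l_\o\sim w(B(\o,\f1n))$ reduce to the pointwise bound $\max_{\o\in\Ld}|\vi(\o)-1|\leq C_1\d$. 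This I would derive from the identity $\la\vi-1,g\ra_\mu=O(\d\|g\|_{1,w})$, $g\in\Pi_{4n}^d$, by testing against the normalized peak polynomial $g:=n^{-(d-1)}K_n(\cdot,\o_0)$ of \eqref{1-9-TD}, which satisfies $g(\o_0)\sim 1$, $|g(\o)|\leq C(1+n\rho(\o,\o_0))^{-N}$ for arbitrarily large $N$, and $\|g\|_{1,w}\sim\mu_{\o_0}$: the localization of $K_n$ guarantees that only the $\o_0$-term contributes to leading order, yielding $|\vi(\o_0)-1|\leq C\d$. For $\d_0$ small this places $\vi$ in $[\f12,\f32]$ on $\Ld$.

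For \eqref{2-2-TD}, the oscillation estimate of the first step again does the work: for $x\in R_\o$ and $f\in\Pi_n^d$ with $0<p<\infty$,
\[ |f(x)|^p\leq C|f(\o)|^p+C\d^p[f_{\xi,n}^\ast(\o)]^p,\qquad |f(\o)|^p\leq C|f(x)|^p+C\d^p[f_{\xi,n}^\ast(x)]^p.\]
Integrating against $w$, summing over $\o$, and invoking \thmref{cor-3-2-ch3} with $\xi>s_w/p$ (so that $\sum_\o\mu_\o[f_{\xi,n}^\ast(\o)]^p\sim\|f_{\xi,n}^\ast\|_{p,w}^p\leq C\|f\|_{p,w}^p$) absorbs the $\d^p$-error once $\d_0$ is small, giving both directions of the equivalence; the case $p=\infty$ is immediate from maximality of $\Ld$ and \thmref{cor-3-2-ch3}. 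The principal obstacle throughout is the promotion to \emph{positive} exact weights: an abstract Hahn--Banach correction is automatic but only yields an absolute error $|b_\o|\leq C\d$, whereas positivity of $\l_\o=\mu_\o(1+O(\d))$ demands the \emph{relative} pointwise bound $\vi=1+O(\d)$ on $\Ld$; this is the step that forces $\d_0$ to depend on $L_w$ (and, for $p<1$, on $p$ via \thmref{cor-3-2-ch3}).
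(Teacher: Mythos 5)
The paper does not prove this lemma; it is quoted from \cite[Theorem~4.1]{Dai1} and \cite[Lemma~3.4]{DW}, so there is no in-paper proof to compare against. Your overall architecture (Voronoi partition and local weights $\mu_\o=w(R_\o)$, tangential Bernstein oscillation control via \eqref{3-2-ch3} and \thmref{cor-3-2-ch3}, approximate cubature with error $O(\d)\|f\|_{1,w}$, approximate Marcinkiewicz--Zygmund for all $p$, and a functional-analytic correction to an exact positive cubature) is exactly the standard Mhaskar--Narcowich--Ward/Dai scheme, and the oscillation and MZ steps are carried out correctly.

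The gap is in the step where you promote the approximate cubature to a positive exact one. You propose to show $\max_{\o\in\Ld}|\vi(\o)-1|\le C\d$ by testing $\langle\vi-1,g\rangle_\mu=O(\d\|g\|_{1,w})$ against the peak $g=n^{-(d-1)}K_n(\cdot\,,\o_0)$, asserting that $\|g\|_{1,w}\sim\mu_{\o_0}$ and that ``only the $\o_0$-term contributes.'' Both assertions fail when $\d$ is small. First, $\|g\|_{1,w}\sim w\bigl(B(\o_0,\tfrac1n)\bigr)$, whereas $\mu_{\o_0}=w(R_{\o_0})\sim w\bigl(B(\o_0,\tfrac\d n)\bigr)$; by \eqref{2-5-0-ch3} these can differ by a factor $\d^{-s_w}$, so the error term $C\d\|g\|_{1,w}$ is not $O(\d)\mu_{\o_0}$. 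Second, $K_n$ is localized at scale $1/n$, not at scale $\d/n$: every $\o\in\Ld$ with $\rho(\o,\o_0)\lesssim 1/n$ has $|g(\o)|\sim 1$, and there are $\sim\d^{-(d-1)}$ such points whose total $\mu$-mass is $\sim w\bigl(B(\o_0,\tfrac1n)\bigr)\gg\mu_{\o_0}$. Consequently the off-diagonal sum $\sum_{\o\neq\o_0}\mu_\o|\vi(\o)-1||g(\o)|$ is not dominated by the $\o_0$-term, and the bootstrap bound on $\max_\o|\vi(\o)-1|$ does not close; a polynomial of degree $O(n)$ simply cannot separate scale-$\d/n$ neighbours.

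The irony is that the route you explicitly dismiss is the one that works. You say an ``abstract Hahn--Banach correction\dots only yields an absolute error $|b_\o|\le C\d$,'' but if you run Hahn--Banach in the correct pair of norms you get exactly the relative bound you want. Let $U:\Pi_{4n}^d\to\RR^\Ld$, $Uf=(f(\o))_\o$, let $V=U(\Pi_{4n}^d)$, equip $\RR^\Ld$ with $\|c\|_{\ell^1(\mu)}:=\sum_\o\mu_\o|c_\o|$, and consider $G:V\to\RR$, $G(Uf)=\int fw\,d\sa-\sum_\o\mu_\o f(\o)$. Your approximate cubature bound together with the approximate MZ for $p=1$ gives $|G(Uf)|\le C\d\|Uf\|_{\ell^1(\mu)}$. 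Hahn--Banach extends $G$ to $\RR^\Ld$ with the same norm; since $(\ell^1(\mu))^*=\ell^\infty$, the extension has the form $c\mapsto\sum_\o\mu_\o b_\o c_\o$ with $\sup_\o|b_\o|\le C\d$. Then $\l_\o:=\mu_\o(1+b_\o)$ satisfies \eqref{2-1-TD}, and for $\d_0$ small one has $\tfrac12\mu_\o\le\l_\o\le\tfrac32\mu_\o$, i.e.\ $\l_\o\sim w\bigl(B(\o,\tfrac\d n)\bigr)$. Replacing your Riesz-representer/peak-polynomial step with this argument repairs the proof; the rest of your proposal (in particular the derivation of \eqref{2-2-TD} from the oscillation estimate and \thmref{cor-3-2-ch3}) is sound.
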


\begin{lem}\label{lem-2-6-TD}\cite[Lemma 2.3]{DW}
If  $0<p<q\leq \infty$, then
 \begin{equation}\label{Nil}\|f\|_{q,w}\leq C n^{(\f 1p-\f1q)s_w}\|f\|_{p,w},\    \    \forall f\in\Pi_n^d.\end{equation}
\end{lem}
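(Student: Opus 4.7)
The plan is to reduce to the endpoint $q=\infty$ and then interpolate via the elementary pointwise estimate $|f|^q\le \|f\|_\infty^{q-p}|f|^p$, which integrates to $\|f\|_{q,w}^q\le \|f\|_\infty^{q-p}\|f\|_{p,w}^p$. The essential tools will be the maximal-function norm equivalence \thmref{cor-3-2-ch3} and the local-to-global doubling bound in \lemref{lem-2-1-ch3}(i), combined with the normalization $w(\sph)=1$ adopted at the start of Section~3.1.

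For the endpoint $q=\infty$, I would fix any $\xi>s_w/p$ and choose $x_0\in\sph$ with $|f(x_0)|=\|f\|_\infty$ (possible since $f$ is continuous). Directly from the definition of $f^\ast_{\xi,n}$, every $y\in B(x_0,1/n)$ satisfies
$$f^\ast_{\xi,n}(y)\ge |f(x_0)|(1+n\rho(x_0,y))^{-\xi}\ge 2^{-\xi}\|f\|_\infty.$$
Raising to the $p$-th power, integrating against $w$ over $B(x_0,1/n)$, and applying \thmref{cor-3-2-ch3} yields
$$2^{-\xi p}\|f\|_\infty^p\, w(B(x_0,1/n))\le \|f^\ast_{\xi,n}\|_{p,w}^p\le C\|f\|_{p,w}^p.$$
Applying \lemref{lem-2-1-ch3}(i) with $r=1/n$, $t=\pi$, $x=x_0$ gives $1=w(\sph)=w(B(x_0,\pi))\le C n^{s_w}w(B(x_0,1/n))$, so $w(B(x_0,1/n))\ge c\, n^{-s_w}$. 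Combining this with the previous display produces $\|f\|_\infty\le C n^{s_w/p}\|f\|_{p,w}$.

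Substituting this endpoint bound into $\|f\|_{q,w}\le \|f\|_\infty^{1-p/q}\|f\|_{p,w}^{p/q}$ gives $\|f\|_{q,w}\le C n^{s_w(1/p-1/q)}\|f\|_{p,w}$, as required. No serious obstacle is anticipated; the argument is the standard adaptation of the Nikolskii template to doubling weights. The one subtle point is the reliance on the normalization $w(\sph)=1$, without which the step bounding $w(B(x_0,1/n))$ from below would only yield a local form of the inequality comparing caps of comparable $w$-mass.
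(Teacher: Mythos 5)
Your proof is correct. The paper itself does not prove this lemma—it is cited verbatim from \cite[Lemma 2.3]{DW}—so there is no in-text argument to compare against, but your derivation is the standard Nikolskii template for doubling weights and is exactly what one would expect the source to do. The three ingredients are used correctly: the pointwise lower bound $f^*_{\xi,n}(y)\ge 2^{-\xi}\|f\|_\infty$ on $B(x_0,1/n)$ follows directly from the definition \eqref{3-2-ch3} by testing against $z=x_0$; Theorem~\ref{cor-3-2-ch3} is invoked with $\xi>s_w/p$ as required; and Lemma~\ref{lem-2-1-ch3}(i) together with the normalization $w(\sph)=1$ (adopted at the start of Section~3) gives $w(B(x_0,1/n))\ge c\,n^{-s_w}$. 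You are also right to flag that the final step relies on the global normalization—without it one only gets the inequality with a constant depending on $w(\sph)$, which is of course harmless but worth noting. An alternative route, available in this paper via Lemma~\ref{lem-2-1-TD}, would discretize both norms by the weighted cubature formula and compare the resulting $\ell^q$ and $\ell^p$ sums directly, obtaining the same $n^{s_w(1/p-1/q)}$ factor from $\lambda_\omega\sim w(B(\omega,1/n))\gtrsim n^{-s_w}$; your maximal-function approach is cleaner and avoids the cubature machinery.
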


\section{The Bernstein inequality with doubling weights}
In this section we study the sharp Bernstein inequality, that is, a sharp  growth on $n$ of the following expression:
$$\sup_{f\in \Pi_n^d, \|f\|_p\leq 1} \|(-\Delta_0)^{r/2} f\|_{L^p(\sph)}$$ or, more generally,
$$\sup_{f\in \Pi_n^d, \|f\|_{p,w}\leq 1} \|(-\Delta_0)^{r/2} f\|_{p,w}.$$
Theorem \ref{sharp-bernstein} in the introduction gives an answer to the first question, that is, in the unweighted case.
In the case of $d=2$, this result (for $0<p<1$)  is due to Belinskii and   Liflyand \cite{BL}, but their proof, especially for the lower estimates, does not work for the case of higher-dimensional spheres.

  For the proof of Theorem \ref{sharp-bernstein}, we first note that the lower estimates in \eqref{4-2-DT} of Theorem \ref{sharp-bernstein} follow directly from Theorem \ref{prop-1-2-ch3} with $\a=\b=\f{d-3}2$.  For the upper estimates in \eqref{4-2-DT}, we shall prove a more general weighted result for all doubling weights.

\begin{thm}\label{thm4-2-TD}If $d\ge 3$, $0<p<\infty$,  $r>0$, and $w$ is a doubling weight on $\sph$, then
\begin{equation}\label{4-3-DT}
\sup_{f\in \Pi_n^d, \|f\|_{p,w}\leq 1} \|(-\Delta_0)^{r/2} f\|_{p,w}\leq C \Phi(n,r,p),\end{equation}
where $$\Phi(n,r,p)= \begin{cases}
n^r, &\   \  \text{if $r>\d(p,w)$ or $r\in 2\NN$;}\\
n^r(\log n)^{\max\{\f 1p,1\}}, &\    \ \text{if $r=\da(p,w)$};\\
n^{\d(p,w)}, &\   \  \text{if $r<\da(p,w)$},
\end{cases}
$$
and
$$\da(p,w):=\begin{cases}
\f {s_w}p-(d-1),&\   \  \text{if $0<p\leq 1$};\\
\f {s_w-(d-1)}p,&\   \  \text{if $1< p<\infty$}.\end{cases}$$
\end{thm}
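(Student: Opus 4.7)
The plan is to reduce the inequality to a one-dimensional kernel estimate via the polynomial maximal function of Theorem~\ref{cor-3-2-ch3}. Since $\eta\equiv 1$ on $[0,1]$, the reproducing identity $f=V_nf$ together with \eqref{1:ch0} gives
\begin{equation*}
(-\Delta_0)^{r/2}f(x)=c_d\int_{\sph} f(y)\,G_{n,r}^{((d-3)/2,(d-3)/2)}(x\cdot y)\,d\sigma(y),\qquad f\in\Pi_n^d,
\end{equation*}
with $G_{n,r}$ as in \eqref{1-1-ch3}. Fix $\xi$ slightly above $s_w/p$ and apply the pointwise control $|f(y)|\le f^*_{\xi,n}(x)(1+n\rho(x,y))^\xi$. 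Combined with Theorem~\ref{cor-3-2-ch3}, the task reduces to the rotationally invariant kernel bound
\begin{equation*}
\mathcal{J}_{n,r,\xi}:=\int_0^\pi(1+n\theta)^\xi\,|G_{n,r}(\cos\theta)|\,(\sin\theta)^{d-2}\,d\theta\ \le\  C\,\Phi(n,r,p).
\end{equation*}

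To control $\mathcal{J}_{n,r,\xi}$ I would split $G_{n,r}=G^{\mathrm{sm}}+G^{\mathrm{sing}}$, where $G^{\mathrm{sm}}$ is generated from a smoothed polynomial approximation of the multiplier $\phi(u):=\eta(u)(u(u+(d-2)/n))^{r/2}$ that vanishes at $u=0$ to arbitrary order, while $G^{\mathrm{sing}}$ collects the non-smooth fractional-power remainder. Lemma~\ref{lem-1-1-sec1}, applied at order $\ell>d-1+\xi$, yields $|G^{\mathrm{sm}}(\cos\theta)|\le C_\ell n^{d-1+r}(1+n\theta)^{-\ell}$, whose contribution to $\mathcal{J}_{n,r,\xi}$ is $O(n^r)$. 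The singular piece is analysed by the Abel-summation / Jacobi-asymptotics machinery developed in the appendix for Theorem~\ref{prop-1-2-ch3}: away from $\theta\sim 1/n$, $G^{\mathrm{sing}}$ behaves essentially like a shifted Dirichlet-type kernel raised to a non-integer power, and its contribution saturates at $n^{\xi-(d-1)}$, with a logarithmic factor precisely when $\xi-(d-1)=r$. Choosing $\xi$ just above $s_w/p$ therefore produces the three regimes of $\Phi(n,r,p)$ in the range $0<p\le 1$; the case $r\in 2\NN$ is already in \cite[Corollary~5.2]{Dai1} and may be excluded.

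For $1<p<\infty$ the listed exponent $\delta(p,w)=(s_w-(d-1))/p$ is strictly smaller than $s_w/p-(d-1)$, so the raw maximal-function estimate must be sharpened. Here I would first discretize via Lemma~\ref{lem-2-1-TD}, then use H\"older's inequality with the probability measure $|G_{n,r}(\omega\cdot y)|\,d\sigma(y)/\|G_{n,r}\|_1$ to move the $p$-th power inside the kernel integral, and finally replace $w$ by its companion $w_n$ (Theorem~\ref{cor-3-4-TD}) while invoking $w_n(x)\le C(1+n\rho(x,y))^{s_w}w_n(y)$ to shift the $s_w$-growth back onto the kernel. This converts the required bound into a weighted $L^1\to L^1$ estimate of the form $\|G_{n,r}\|_1^{p-1}\cdot\mathcal{J}_{n,r,s_w}\le C\,\Phi(n,r,p)^p$; the same kernel splitting as above, now with $\xi=s_w$ and the prefactor $\|G_{n,r}\|_1^{p-1}\sim n^{r(p-1)}$, yields the sharper exponent $\delta(p,w)$ after optimization.

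\textbf{Main obstacle.} The decisive step is the pointwise analysis of $G^{\mathrm{sing}}$ when $r\notin 2\NN$: Lemma~\ref{lem-1-1-sec1} does not apply directly, because the fractional multiplier $\phi$ fails to vanish to sufficient order at the origin. Isolating this singularity by a Taylor-type splitting and establishing the sharp one-dimensional estimate of $G^{\mathrm{sing}}$---essentially the content of the appendix proof of Theorem~\ref{prop-1-2-ch3}---is what generates the threshold $\delta(p,w)$ and the critical-index logarithm.
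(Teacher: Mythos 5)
Your overall line of attack—reducing to a one-dimensional kernel estimate via the polynomial maximal function—cannot produce the thresholds $\delta(p,w)$ in the statement, because the reduction is already too lossy before any kernel splitting is attempted. After the maximal-function step one is left with the weighted kernel integral $\mathcal{J}_{n,r,\xi}$ with $\xi>s_w/p$; but the two-sided asymptotic $|G_{n,r}(\cos\theta)|\sim\theta^{-(d-1+r)}$ for $\theta\in[A n^{-1},\varepsilon]$ established in Lemma~\ref{lem-3-4-TD} (the lower-bound companion of Lemma~\ref{lem-3-3-DT}) forces
\begin{equation*}
\mathcal{J}_{n,r,\xi}\ \gtrsim\ n^{\xi}\int_{A/n}^{\varepsilon}\theta^{\xi-r-1}\,d\theta\ \sim\ n^{\max(\xi,r)},
\end{equation*}
with a logarithm at $\xi=r$. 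No decomposition $G_{n,r}=G^{\mathrm{sm}}+G^{\mathrm{sing}}$ can improve this, since the lower bound is for $G_{n,r}$ itself; your claim that the singular piece contributes only $n^{\xi-(d-1)}$ is off by a factor of $n^{d-1}$. The failure is already visible in the simplest test case $p=1$, $w\equiv1$, where $s_w=d-1$ and $\delta(1,1)=0$: the theorem (Lizorkin's result) gives $n^r$ for every $r>0$, whereas your method returns $n^{d-1+\epsilon}$ for any $r<d-1$.

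The ingredient you are missing is a dyadic Littlewood--Paley decomposition of $f$, not merely of the kernel. The paper sets $L_0f=V_1f$, $L_jf=V_{2^j}f-V_{2^{j-1}}f$ and writes $(-\Delta_0)^{r/2}f=\sum_{j=0}^m(-\Delta_0)^{r/2}L_jf$. Because each $L_jf$ is frequency-localized to $[2^{j-1},2^{j+2}]$, the rescaled multiplier $\psi(u)\bigl(u(u+2^{-j}(d-2))\bigr)^{r/2}$ is genuinely smooth (it is supported away from $u=0$), so Lemma~\ref{lem-1-1-sec1} yields $|L_{j,r}(\cos\theta)|\le c\,2^{j(d-1+r)}(1+2^j\theta)^{-\ell}$ for \emph{arbitrarily large} $\ell$—a decay rate unattainable for the single kernel $G_{n,r}$ precisely because the fractional multiplier is singular at $k=0$. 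With this super-polynomial dyadic decay in hand, one applies the unweighted Nikolskii inequality (Lemma~\ref{lem-2-6-TD} with $w\equiv1$) on each block when $0<p\le1$, which supplies the crucial factor $n^{(d-1)(1-p)}$, or H\"older's inequality against $|L_{j,r}|\,d\sigma/\|L_{j,r}\|_1$ when $p>1$; then one trades $w_{2^j}$ for $w_n$ via \eqref{2-5-0-ch3}, paying $(n2^{-j})^{s_w-(d-1)}$ (resp. $(2^j/n)^{(d-1-s_w)/p}$), and sums a geometric series in $j$. The three regimes of $\Phi(n,r,p)$ then fall out of whether the geometric series is dominated by $j=m$, by $j=0$, or balances. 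Your proposed bound for $p>1$, namely $\|G_{n,r}\|_1^{p-1}\mathcal{J}_{n,r,s_w}$, equals $n^{r(p-1)+\max(s_w,r)}$, which again strictly exceeds $\Phi(n,r,p)^p$ in the subcritical range. Also, as a minor arithmetic slip, for $p>1$ the exponent $(s_w-(d-1))/p$ is \emph{larger}, not smaller, than $s_w/p-(d-1)$.
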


\begin{rem} (i) \   \  The proof of Theorem \ref{thm4-2-TD} below works equally well when $d=2$ and $r$ is not an integer, in which case (\ref{4-3-DT}) is  simply  the usual Bernstein inequality for the fractional derivatives of trigonometric polynomials, and to the best of our knowledge, our results for general doubling weights and non-integer $r$ are new.
Note also that in the case of $w=1$ (i.e., the unweighted case), $s_w=d-1$. Thus,  the upper estimate of  \eqref{4-2-DT} is a  direct consequence of Theorem \ref{thm4-2-TD}.

(ii)  \   \ Note that in the case when the power $r/2$ of the Laplace-Beltrami  operator  is an integer, then the weighted Bernstein
 inequality \eqref{4-1-TD-new} holds for the full range of $0<p<\infty$, whereas in the case of non-integer power, this is no longer true.

\end{rem}

The proof of Theorem \ref{thm4-2-TD} given below is different from that of \cite{BL}.\\

{\it Proof of Theorem \ref{thm4-2-TD}.}   Assume that $2^{m-1}\leq n<2^m$.  Define
$L_0 g= V_1 g$, and $L_j g=V_{2^{j}}f-V_{2^{j-1}}f$ for $j\ge 1$. Then for any $f\in \Pi_n^d$,
\begin{equation}\label{4-4-1-TD}(-\Delta_0)^{r/2}f=(-\Delta_0)^{r/2}(V_{2^m} f) =\sum_{j=0}^m (-\Delta_0)^{r/2}L_j f.\end{equation}
However, using \eqref{2-2} and \eqref{1:ch0}, it is easily seen that
\begin{equation}\label{4-4-TD} (-\Delta_0)^{r/2}L_j f(x)=\int_{\sph} f(y) L_{j,r}(x\cdot y)\, d\s(y),\end{equation}
where $L_{0,r} (t) =c_{d,r} E_1^{(\f{d-3}2, \f{d-3}2)}(t)$,
$$ L_{j,r}(t)= c\sum_{k=2^{j}}^{2^{j+2}} \psi(2^{-j}k)(k(k+d-2))^{r/2} E_k^{(\f{d-3}2, \f{d-3}2)}(t),\   \   j\ge 1,$$
and  $\psi(x)=\eta(x/2)-\eta(x)$.
Invoking  Lemma \ref{lem-1-1-sec1} with $\vi(x)=\psi(x) (x(x+2^{-j}(d-2)))^{r/2}$, we have
\begin{equation}\label{4-5-TD}
|L_{j,r}(\cos\t)|\leq c 2^{jr+d-1} (1+2^j\rho(x,y))^{-\ell},\    \  \forall \ell>1.
\end{equation}
Recalling the definition of  $w_k(x)$ in \eqref{1--3}, we obtain that  for $0<p\leq 1$,
\begin{align}
&|(-\Delta_0)^{r/2}L_j f(x)|^p w_{2^j}(x)\notag\\
&\leq c n^{(d-1)(1-p)}\int_{\sph} |f(y)|^p |L_{j,r}(x\cdot y)|^p\, d\s(y)w_{2^j}(x)\notag\\
&\leq c n^{(d-1)(1-p)}2^{jp(d-1+r)}\int_{\sph} |f(y)|^p (1+2^j \rho(x,y))^{-p\ell+s_w}w_{2^j}(y) \, d\s(y),\label{4-6-TD}\end{align}
where we used \eqref{4-4-TD} and  the unweighted  Nikolskii inequality (i.e.,  Lemma \ref{lem-2-6-TD} with $s_w=d-1$) in the first step, and used
\eqref{4-5-TD} and \eqref{2-4-ch3} in the second step.
Integrating this last inequality with respect to $x\in\sph$ gives
\begin{align}
&\|(-\Delta_0)^{r/2}L_j f\|^p_{p,w}\sim \|(-\Delta_0)^{r/2}L_j f\|^p_{p,w_{2^j}}\notag\\
&\leq C n^{-(d-1)p}2^{jp(d-1+r)} (n2^{-j})^{d-1} \int_{\sph}|f(y)|^p w_{2^j}(y)\, d\s(y)\notag\\
&\leq Cn^{-(d-1)p}2^{jp(d-1+r)} (n2^{-j})^{s_w} \int_{\sph}|f(y)|^p w_{n}(y)\, d\s(y)\notag\\
&\leq C n^{s_w-p(d-1)} 2^{j(p(d-1+r)-s_w)}\|f\|_{p,w}^p,\label{4-8-TD}
\end{align}
where the first step uses  Theorem \ref{cor-3-4-TD} and the fact that $(-\Delta_0)^{r/2}L_j f\in\Pi_{2^j}^d$. The second step uses  the inequality  \eqref{4-6-TD} with $\ell>(s_w+d-1)/p$, the third step uses \eqref{2-5-0-ch3}, and the last step follows from Theorem \ref{cor-3-4-TD} and the fact that $f\in\Pi_n^d$. Thus, combining \eqref{4-4-1-TD} with \eqref{4-8-TD}, we obtain
\begin{align*}
\|(-\Delta_0)^{r/2}(L_j f)\|_p^p \leq C \Bigl[n^{s_w-p(d-1)}\sum_{j=0}^m  2^{j(p(d-1+r)-s_w)}\Bigr]\|f\|_{p,w}^p,
\end{align*}
which, by straightforward calculation gives the desired upper bound.

The case of $p> 1$ can be treated similarly. Indeed,  instead of using Nikolskii's inequality, we use H\"older's inequality  to obtain
\begin{align*}&|(-\Delta_0)^{r/2} L_j f(x)|^p w_{2^j}(x)\leq C 2^{jr(p-1)} \int_{\sph} |f(y)|^p |L_{j,r} (x\cdot y)\, d\s(y)w_{2^j}(x)\\
&\leq C 2^{jrp} 2^{j(d-1)} \int_{\sph} |f(y)|^p ( 1+2^j\rho(x,y))^{-\ell+s_w} w_{2^j}(y)\, d\s(y)\\
&\leq C  2^{jrp}\Bigl( \f {2^j}{n}\Bigr)^{d-1-s_w} 2^{j(d-1)} \int_{\sph} |f(y)|^p ( 1+2^j\rho(x,y))^{-\ell+s_w} w_{n}(y)\, d\s(y).\end{align*}
We then integrate the last inequality with respect to $x\in\sph$ and deduce
$$\|(-\Delta_0)^{r/2} L_j f\|_{p,w}\leq C  2^{jr}\Bigl( \f {2^j}{n}\Bigr)^{(d-1-s_w)/p}\|f\|_{p,w},$$
which, in turn, implies
$$
\|(-\Delta_0)^{r/2}(L_j f)\|_p \leq  \sum_{j=0}^m \|(-\Delta_0)^{r/2} L_j f\|_{p,w}\leq C \Big(\sum_{j=0}^m 2^{jr}\Bigl( \f {2^j}{n}\Bigr)^{(d-1-s_w)/p}\Big)\,\|f\|_{p,w}.$$
The desired upper bounds for the case of $p>1$ then follow.
 \hb

\section{The Bernstein inequality with $A_p$ weights}

Given $1<p<\infty$, we say a weight function $w$ on $\sph$ belongs to $A_p$ if \begin{equation}\label{5-1-0}
\sup_{B} \f {w(B)}{|B|} \Bigl(\f 1{|B|} \int_{B} w(x)^{-\f {p'}p}\, d\s(x)\Bigr)^{p-1}\leq A_p(w)<\infty,
\end{equation}
where the supremum is taken over all the spherical caps $B$ of $\sph$.
A characterization of the Muckenhoupt $A_p$ condition  was recently obtained in \cite[Th. 2.4]{lerner}.

Similarly, a weight function $w$ belongs to $A_1$ if there exists a constant $A_1(w)>0$ such that
for all spherical caps $B\subset \sph$,
\begin{equation}\label{5-2-0}
\f 1{|B|}\int_B w(x)\, d\s(x)\leq A_1(w) \inf_{x\in B} w(x).
\end{equation}
It is well known that if $1<p<\infty$ and $w\in A_p$ then
\begin{equation}
\|Mf\|_{p,w}\leq C_p \|f\|_{p,w},
\end{equation}
where $Mf$ denotes the Hardy-Littlewood maximal function on $\sph$:
$$ Mf(x):=\sup_{0<r<\pi} \f 1{|B(x,r)|}\int_{B(x,r)} |f(y)|\, d\s(y).$$

Another useful fact on $A_p$ weights is the following: if $w\in A_p$ and $1\leq p<\infty$, then one can choose $s_w$ so that
\begin{equation}\label{5-3-0}
s_w \leq p(d-1);\end{equation}
see \cite[p. 196, (5)]{Stein95}.
Let us also mention that the $A_p$ classes have a self-improvement property (\cite[p. 202]{Stein95}), that is,
if $w\in A_p$ for some $1<p<\infty$, then  $w\in A_{p-\epsilon}$ for some $\epsilon > 0$.


   Using properties of the $A_p$-weights and  Theorem \ref{thm4-2-TD}, we can easily deduce the following   weighted Bernstein inequality for $A_p$ weights:

\begin{thm}\label{thm-6-2}
If $1\leq p<\infty$, $r>0$,
 $q:=\max\{ p, \f{d-1+pr}{d-1}\}$, and $w\in A_q$, then
 \begin{equation} \label{5-4-0}\|(-\Delta_0)^{r/2}f\|_{p,w}\leq C_{A_q(w)} n^r\|f\|_{p,w},\   \   \  \forall f\in\Pi_n^d.\end{equation}
 This, in particular, implies that if  $w\in A_p$, then \eqref{5-4-0} holds  for all $r>0$.
\end{thm}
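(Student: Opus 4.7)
The plan is to split the argument according to which term realizes the maximum in $q=\max\{p,(d-1+pr)/(d-1)\}$, and to combine Theorem \ref{thm4-2-TD} with the standard $A_q$ machinery.

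\textbf{Case 1.} Suppose $q=(d-1+pr)/(d-1)\ge p$, equivalently $r\ge (d-1)(1-1/p)$. Here I would argue by a direct application of Theorem \ref{thm4-2-TD}, upgraded by the self-improvement of Muckenhoupt classes: since $w\in A_q$ with $q=1+pr/(d-1)>1$, there exists $\epsilon=\epsilon(w)>0$ with $w\in A_{q-\epsilon}$, and then \eqref{5-3-0} delivers $s_w\le (q-\epsilon)(d-1)<d-1+pr$. For $p>1$ this gives $\delta(p,w)\le r-\epsilon(d-1)/p<r$; for $p=1$ the analogous computation gives $\delta(1,w)\le r-\epsilon(d-1)<r$. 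In either case Theorem \ref{thm4-2-TD} produces $\Phi(n,r,p)=n^{r}$, which is exactly the desired bound.

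\textbf{Case 2.} Suppose $q=p>(d-1+pr)/(d-1)$, which forces $p>1$ and $r<(d-1)(1-1/p)$. Self-improvement alone is insufficient here, because the guaranteed gap $\epsilon$ is weight-dependent and cannot in general be shown to exceed the fixed strictly positive constant $p-1-pr/(d-1)$; this is the principal obstacle. I would instead recycle the dyadic decomposition from the proof of Theorem \ref{thm4-2-TD}: with $2^{m-1}\le n<2^m$ and $(-\Delta_0)^{r/2}f=\sum_{j=0}^m (-\Delta_0)^{r/2}L_j f$, the pointwise kernel bound \eqref{4-5-TD} applied with $\ell>d-1$, combined with a dyadic annular decomposition of the convolution integral over caps of radius $\sim 2^{k-j}$ centered at $x$, yields the key pointwise majorization
$$|(-\Delta_0)^{r/2}L_j f(x)|\le C\,2^{jr}\,Mf(x),$$
where $M$ is the Hardy--Littlewood maximal operator on $\sph$. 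Summing the geometric series in $j\le m$ produces $|(-\Delta_0)^{r/2}f(x)|\le Cn^{r}Mf(x)$, and the strong-type $(p,p)$ boundedness of $M$ on $L_{p,w}$ under $w\in A_p$, $p>1$, closes the argument.

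The final assertion that $w\in A_p$ alone suffices for every $r>0$ is immediate from the inclusion $A_p\subseteq A_{q'}$ for $q'\ge p$: such a $w$ automatically lies in whichever $A_q$ class the theorem requires, so one of the two cases above always applies. I expect the only nontrivial step to be the pointwise majorization by $Mf$ in Case 2, and this is already essentially present in the kernel estimates behind Theorem \ref{thm4-2-TD}.
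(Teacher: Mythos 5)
Your proof is correct and follows essentially the same two-case strategy as the paper: self-improvement of $A_q$ plus \eqref{5-3-0} to invoke Theorem \ref{thm4-2-TD} when $q=(d-1+pr)/(d-1)\geq p$, and a pointwise maximal-function majorization plus $A_p$-boundedness of $M$ when $q=p$. The only cosmetic difference is in the second case, where the paper cites Lemma \ref{lem-3-3-DT} to get $|K_{n,r}(\cos\t)|\le cn^{d-1+r}(1+n\t)^{-(d-1+r)}$ and hence $|(-\Delta_0)^{r/2}f|\le cn^{r}Mf$ in one step, whereas you re-derive the same bound by summing the dyadic pieces $L_j$ using \eqref{4-5-TD}; since Lemma \ref{lem-3-3-DT} is itself proved by exactly that dyadic decomposition, this is the same computation, merely inlined.
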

\begin{proof} Firstly, we show \eqref{5-4-0} for the case of $r>(1-\f 1p)(d-1)$. In this case, $q=\f{d-1+pr}{d-1}$. Since  $w\in A_q$ implies that  $w\in A_{q-\va}$ for some small $\va>0$, using  \eqref{5-3-0}, we deduce  that
$s_w<q(d-1) =d-1+pr$, or equivalently,
$ r>\f {s_w-(d-1)}p.$    The desired inequality  \eqref{5-4-0} in this case  then follows from  Theorem \ref{thm4-2-TD}.

Next, we show \eqref{5-4-0} for $0<r\leq (1-\f 1p)(d-1)$, in which case $q=p$. If $p=1$ and $w\in A_1$ then using \eqref{5-3-0},  we have   $s_w=d-1$, and according to   Theorem \ref{thm4-2-TD},  \eqref{5-4-0} holds whenever  $r>s_w-(d-1)=0$.  Thus, it remains to show \eqref{5-4-0} for the case of   $w\in A_p$ and $1<p<\infty$. Observe that for all $f\in\Pi_n^d$,
\begin{equation}\label{5-6-0} (-\Delta_0)^{r/2} f(x)=(-\Delta_0)^{r/2} V_n f (x) :=\int_{\sph} f(y)K_{n,r}(x\cdot y)\, d\s(y),\end{equation}
where
\begin{equation}\label{5-7-0}K_{n,r}(\cos\t):=C_d \sum_{k=0}^{2n} \eta(\f kn) E_k^{(\f{d-3}2, \f{d-3}2)}(\cos\t).\end{equation}
Using Lemma \ref{lem-3-3-DT} with  $\a=\b=\f{d-3}2$,  we have
$$|K_{n,r} (\cos\t) |\leq c n^{d-1+r} ( 1+n\t)^{-(d-1+r)},\   \    0\leq \t\leq \pi.$$
Thus, a straightforward computation, using \eqref{5-6-0},  shows that for all $f\in\Pi_n^d$,
$$|(-\Delta_0)^{r/2}f(x)|\leq c n^r Mf(x),  \    \  x\in\sph. $$
Since $w\in A_p$ and $1<p<\infty$, this implies that
$$ \|(-\Delta_0)^{r/2}f\|_{p,w}\leq c n^r \|Mf\|_{p,w}\leq c n^r\|f\|_{p,w},$$
which is the desired Bernstein inequality.
\end{proof}

\section{Weighted characterization of the Bernstein inequality}

\begin{defn} Given $1< p<\infty$, and $\tau\ge 0$, we say a weight function $w$ on $\sph$ belongs to the class $\mathbb{A}_{p,\tau}$ if   for any $r>\tau$,
\begin{equation}\label{6-1-0}
\sup_{B}\sup_{n\in\NN} \f {w_n(B)}{|B|} \Bigl( \f 1 {|B|} \int_B w_n(y)^{-
\f 1{p-1}
} \, d\s(y) \Bigr)^{p-1} ( 1+ n |B|^{\f 1{d-1}})^{-rp}
= \mathbb{A}_{p,\tau}(w)<\infty,
\end{equation}
where the first supremum is taken over all spherical caps of \,$\sph$. We say $w\in \mathbb{A}_{1,\tau}$ if there exists a constant $C>0$ such that for all spherical caps $B\subset \sph$, and all $r>\tau$,
\begin{equation}\label{6-2-0}
\f {w_n(B)}{|B|}\leq C (1+n|B|^{\f 1{d-1}})^r \inf_{x\in B} w_n(x).\end{equation}
The smallest value of $C$ in (\ref{6-2-0}) is called the $\mathbb{A}_{1,\tau}(w)$ constant. 
\end{defn}
The following lemma collects some useful properties on weights from the class $\mathbb{A}_{p,\tau}$.
\begin{lem}\label{lem-6-2}\begin{enumerate}[\rm (i)]
\item  If $0<\tau_1\le \tau_2$  and $1\leq p<\infty$, then $\mathbb{A}_{p,\tau_1}\subset \mathbb{A}_{p,\tau_2}$.
    \item If $1\le p\le q<\infty$ and $\tau>0$, then
$\mathbb{A}_{p,\tau}\subset \mathbb{A}_{q,\tau}$.
\item If $w$ is a doubling weight on $\sph$, then  $w\in \mathbb{A}_{p,\tau}$ with $\tau:= \f {s_w-(d-1)}{p}$.
    \item For any $1\leq p<\infty$, we have
    $$ A_p \subset \bigcup_{\tau>0} \mathbb{A}_{p,\tau}.$$ 
\item $w\in \mathbb{A}_{p,\tau}$ if and only if for any  $f\in L(\sph)$,  any spherical cap $B:=B(x,\t)\subset \sph$, and any $r>\tau$,
\begin{equation}\label{6-0-0} |f_B|^p\leq C (1+n\t)^{rp} \f 1{w_n(B)}\int_B |f(y)|^p w_n(y)\, d\s(y),\end{equation}
where $f_B:=\f 1{|B|} \int_B f(y)\, d\s(y)$, and the constant $C$ is independent of $B$, $f$ and $n$. \end{enumerate}
\end{lem}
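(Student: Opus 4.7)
The plan is to dispatch the formal parts first and concentrate the work on item (iii). Part (i) is immediate from the definition: the defining inequalities are only required for $r>\tau$, so enlarging $\tau$ restricts the range of $r$ and weakens the condition. For part (ii) with $1<p\le q<\infty$, I would apply Jensen's inequality to the concave map $t\mapsto t^{(p-1)/(q-1)}$ to get $\bigl(\tfrac1{|B|}\int_B w_n^{-1/(q-1)}\bigr)^{q-1}\le \bigl(\tfrac1{|B|}\int_B w_n^{-1/(p-1)}\bigr)^{p-1}$, and then combine this with the trivial $(1+n|B|^{1/(d-1)})^{-rq}\le(1+n|B|^{1/(d-1)})^{-rp}$, valid since $rq\ge rp\ge 0$. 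The boundary case $p=1<q$ is handled by the crude bound $\bigl(\tfrac1{|B|}\int_B w_n^{-1/(q-1)}\bigr)^{q-1}\le(\inf_B w_n)^{-1}$ combined directly with the defining inequality of $\mathbb{A}_{1,\tau}$.

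The technical heart of the lemma is (iii), which reduces to the refined doubling estimate
\begin{equation*}
\frac{w_n(B)}{|B|}\le C(1+n|B|^{1/(d-1)})^{s_w-(d-1)}\inf_{x\in B}w_n(x)
\end{equation*}
valid for every spherical cap $B=B(x_0,\theta)\subset\sph$. For $\theta\ge 1/n$, I would use the representation $w_n(y)\sim n^{d-1}w(B(y,1/n))$ together with Fubini to obtain $\int_B w_n\lesssim w(B(x_0,\theta+1/n))\lesssim w(B(x_0,\theta))$, the latter by doubling of $w$. For any $y_0\in B$ minimizing $w_n|_B$, the inclusion $B(x_0,\theta)\subset B(y_0,2\theta)$ together with iterated doubling then yields $w(B(x_0,\theta))\le C(n\theta)^{s_w}w(B(y_0,1/n))\le C(n\theta)^{s_w}n^{-(d-1)}w_n(y_0)$; dividing by $|B|\sim\theta^{d-1}$ extracts the factor $(n\theta)^{s_w-(d-1)}$. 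The case $\theta<1/n$ is immediate from $B\subset B(x_0,1/n)$ together with \eqref{2-4-ch3}, on which $w_n$ is uniformly comparable. Granted this estimate, (iii) for $p>1$ follows by bounding $\bigl(\tfrac1{|B|}\int_B w_n^{-1/(p-1)}\bigr)^{p-1}\le(\inf_B w_n)^{-1}$ and multiplying, while for $p=1$ the refined estimate is exactly the $\mathbb{A}_{1,\tau}$ condition. Part (iv) is then an immediate corollary using the $A_p$ bound $s_w\le p(d-1)$ recorded in \eqref{5-3-0}.

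Part (v) is the duality reformulation. For the forward direction I would apply H\"older with exponents $p$ and $p/(p-1)$ to obtain $|f_B|^p\le\bigl(\tfrac1{|B|}\int_B|f|^pw_n\bigr)\bigl(\tfrac1{|B|}\int_B w_n^{-1/(p-1)}\bigr)^{p-1}$, and then substitute the $\mathbb{A}_{p,\tau}$ bound on the second factor; rearrangement yields the claimed weighted Poincar\'e-type inequality. The converse is obtained by plugging in the extremal choice $f=w_n^{-1/(p-1)}$ (for $p>1$), or $f=\chi_E$ followed by optimization over $E\subset B$ (for $p=1$), into the Poincar\'e-type inequality; this makes H\"older essentially tight and directly recovers the defining condition of $\mathbb{A}_{p,\tau}$.

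The main obstacle I anticipate is extracting the sharp exponent in the refined doubling estimate at the heart of (iii). A naive appeal to the pointwise bound $\sup_B w_n\le C(1+n\theta)^{s_w}\inf_B w_n$ coming from \eqref{2-4-ch3} would only yield $\mathbb{A}_{p,s_w/p}$, which is too weak by $(d-1)/p$. The $(d-1)$ gain must come from averaging $w_n(y)\sim n^{d-1}w(B(y,1/n))$ over $y\in B$ via Fubini, producing a factor $n^{d-1}|B|\sim(n\theta)^{d-1}$ that cancels part of the doubling growth and lowers the threshold by precisely $(d-1)/p$.
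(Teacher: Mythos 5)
Your proof is correct, and for parts (i), (ii), (v) it follows essentially the paper's route: (i) is a triviality of definition, (ii) amounts to the power-mean/Jensen monotonicity of $p \mapsto \bigl(\tfrac1{|B|}\int_B w_n^{-1/(p-1)}\bigr)^{p-1}$ (the paper phrases this via H\"older), and (v) is the same H\"older-plus-extremal-function argument, the only addition being your clean handling of $p=1$ via $f=\chi_E$ and Lebesgue differentiation.

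Where you depart genuinely from the paper is in (iii) and (iv). For (iii), the paper fixes $m\sim 1/\theta$, observes that $w_m$ is pointwise comparable to a constant on $B$ so that $w_n(B)\sim|B|w_m(x)$ and $w_m(x)\bigl(\tfrac1{|B|}\int_B w_m^{-1/(p-1)}\bigr)^{p-1}\lesssim 1$, and then transfers from $w_m$ to $w_n$ via the pointwise ratio $w_n/w_m\gtrsim(n/m)^{d-1-s_w}$. You instead isolate a refined $A_1$-type inequality $w_n(B)/|B|\lesssim(1+n\theta)^{s_w-(d-1)}\inf_B w_n$ obtained by Fubini ($\int_B w_n\lesssim w(B(x_0,\theta))$) and iterated doubling down to scale $1/n$, and then apply the crude bound $\bigl(\tfrac1{|B|}\int_B w_n^{-1/(p-1)}\bigr)^{p-1}\le(\inf_B w_n)^{-1}$. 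Both extract the same exponent $s_w-(d-1)$; your version packages the $(d-1)$-gain into a clean, standalone statement about $w_n$ that is conceptually closer to an $A_1$ condition, which is a mild structural improvement. Your diagnosis that the naive pointwise bound \eqref{2-4-ch3} only gives $\tau=s_w/p$ and that the gain comes from averaging is exactly right and matches the mechanism the paper uses implicitly. For (iv), the paper forward-references Theorems \ref{thm-6-2} and \ref{thm-7-4} (the $A_p$-Bernstein inequality and the full characterization), which incidentally delivers the stronger inclusion $A_p\subset\bigcap_{\tau>0}\mathbb{A}_{p,\tau}$; your argument via (iii) and the bound $s_w\le p(d-1)$ from \eqref{5-3-0} is more elementary and avoids the forward reference, at the cost of only giving membership in $\mathbb{A}_{p,\tau}$ for $\tau=(s_w-(d-1))/p$ rather than for every $\tau>0$ --- but that is all the statement of (iv) asks for, so this is a sound simplification.
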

\begin{proof} Assertion  (i) is obvious from the definition of the $\mathbb{A}_{p,\tau}$ class.
 Assertion (ii) follows by H\"older's inequality and the fact that the term on the left hand side of \eqref{6-1-0} is a decreasing function of $p$.

 To prove Assertion  (iii) for  the case of $p>1$, it suffices to show that for  $B=B(x,\t)\subset \sph$, and $\tau:= \f {s_w-(d-1)}{p}$,
\begin{equation}\label{6-3-0}
\f {w_n(B)}{|B|} \Bigl( \f 1 {|B|} \int_B w_n(y)^{-\f 1{p-1}}\, d\s(y) \Bigr)^{p-1} \leq C (1+n\t)^{\tau p}.
\end{equation}
 \eqref{6-3-0} holds trivially if $\t\leq \f 1n$ since $w_n(y) \sim w_n(z)$ whenever $\rho(y,z) \leq n^{-1}$. Now assume that $\f 1\t \sim m$ for some  positive integer $m\leq n$. Then  $w_m(y)\sim w_m(x)$ whenever $y\in B$. Since $m\leq n$, it is easily  seen that $$w_n(B)\sim w(B)\sim |B| w_m(x),$$
 and  using Lemma \ref{lem-2-1-ch3}, we deduce
$$ \f {w_n(y)}{w_m(y)} \ge c \Bigl( \f nm\Bigr)^{d-1-s_w},\   \ y\in\sph.$$
Thus,
\begin{align*}
\f {w_n(B)}{|B|} &\Bigl( \f 1 {|B|} \int_B w_n(y)^{-\f 1{p-1}}\, d\s(y) \Bigr)^{p-1}\\
&\leq c w_m(x) \Bigl( \f 1 {|B|} \int_B w_m(y)^{-\f 1{p-1}}\, d\s(y) \Bigr)^{p-1} \Bigl( \f nm\Bigr)^{s_w-(d-1)}\\
&\leq c \Bigl( \f nm\Bigr)^{s_w-(d-1)}\sim (n\t)^{s_w-d+1}\leq c (n\t)^{rp},\end{align*}
provided that $r\ge  (s_w-d+1)/p$. This proves Assertion (iii) for the case $p>1$. Assertion (iii) for the case $p=1$ can be treated similarly.

Assertion (iv) follows directly from Theorem \ref{thm-6-2} and Theorem \ref{thm-7-4} below.

 Finally, we show assertion (v). We first prove the necessity. Again we just deal with the case of $p>1$ for the sake of simplicity. Using  H\"older's inequality and the $\mathbb{A}_{p,\tau}$-condition, we have, for $r>\tau$,
\begin{align*}
|f_B|^p &\leq \Bigl(\f 1{|B|}\int_B |f(y)| w_n(y)^{\f 1p} w_n(y)^{-\f1p}\, d\s(y) \Bigr)^p \\
&\leq \Bigl(\f 1{|B|}\int_B |f(y)|^p w_n(y)  d\s(y) \Bigr) \Bigl( \f 1{|B|}\int_B w_n(y)^{-\f {p'}p}\, d\s(y)\Bigr)^{p-1}\\
&\leq c
( 1+ n |B|^{\f 1{d-1}})^{rp}
\f 1{w_n(B)}\int_B |f(y)|^p w_n(y)  d\s(y).\end{align*}
This proves that the $\mathbb{A}_{p,\tau}$-condition \eqref{6-1-0}  implies the condition \eqref{6-0-0}. Finally, the sufficiency part of Assertion (v)  follows directly by setting $f(x) =w_n(x)^{-\f 1{p-1}}$.
\end{proof}
The next result was proved in \cite[Lemma 2.5]{Dai2}.
\begin{lem} \label{lem-7-3}If $1\leq p<\infty$, and $w$ is a doubling weight, then
$$\|V_n f\|_{p,w_n} \leq c \|f\|_{p,w_n},\   \   \forall f\in L_{p},\   \   \forall n\in\NN.$$
\end{lem}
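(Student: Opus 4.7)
The plan is to exploit the highly localized kernel representation of $V_n$ together with the doubling bound $w_n(x)\leq C(1+n\rho(x,y))^{s_w} w_n(y)$ from Lemma \ref{lem-2-1-ch3}(ii). Since
$$V_nf(x)=\int_{\sph} f(y)K_n(x\cdot y)\,d\sigma(y),$$
and $K_n$ has the form of \eqref{1-10-ch3} with $\a=\b=\f{d-3}2$ and $\vi=\eta$, Lemma \ref{lem-1-1-sec1} applied with $\ell$ arbitrarily large yields the pointwise estimate
$$|K_n(x\cdot y)|\leq C_\ell\, n^{d-1}(1+n\rho(x,y))^{-\ell}=:\CK_{n,\ell}(x,y),\qquad x,y\in\sph.$$

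First I would record the two auxiliary integral estimates I intend to use: for $\ell>d-1$,
$$\int_{\sph}\CK_{n,\ell}(x,y)\,d\sigma(y)\leq C,\qquad x\in\sph,$$
which follows from a standard change of variables in geodesic polar coordinates centered at $x$, and for $\ell>d-1+s_w$,
$$\int_{\sph}\CK_{n,\ell}(x,y)\,w_n(x)\,d\sigma(x)\leq C\,w_n(y),\qquad y\in\sph,$$
which follows from the previous display after inserting $w_n(x)\leq C(1+n\rho(x,y))^{s_w}w_n(y)$ from Lemma \ref{lem-2-1-ch3}(ii) and absorbing the polynomial growth into the $(1+n\rho(x,y))^{-\ell}$ factor.

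Next, for $p=1$, the result is immediate from Fubini together with the second integral estimate. For $1<p<\infty$, I would write $\CK_{n,\ell}=\CK_{n,\ell}^{1/p}\cdot\CK_{n,\ell}^{1/p'}$ and apply H\"older's inequality in the representation $V_nf(x)=\int f(y)K_n(x\cdot y)\,d\sigma(y)$, choosing $\ell$ sufficiently large so that both resulting integrals are dominated by $\CK_{n,\ell'}$ for some $\ell'>d-1+s_w$. This gives
$$|V_nf(x)|^p\leq C\left(\int_{\sph}\CK_{n,\ell'}(x,y)\,d\sigma(y)\right)^{p-1}\int_{\sph}|f(y)|^p\CK_{n,\ell'}(x,y)\,d\sigma(y),$$
and the first factor is bounded by the unweighted integral estimate. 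Multiplying by $w_n(x)$, integrating in $x$, applying Fubini, and using the weighted integral estimate produces $\|V_nf\|_{p,w_n}^p\leq C\|f\|_{p,w_n}^p$.

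The main subtlety is purely bookkeeping: one must pick a single $\ell$ large enough that after H\"older splitting and after the factor $(1+n\rho(x,y))^{s_w}$ from the doubling inequality is absorbed, the remaining decay exponent still exceeds $d-1$ so that both auxiliary integrals converge uniformly in $n$ and in the base point. Once $\ell$ is chosen with $\ell>p(d-1+s_w)$, every constant in the argument depends only on $d$, $p$, and the doubling constant $L_w$, which is precisely what is required.
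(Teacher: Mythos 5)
Your proof is correct. The paper itself gives no argument for this lemma---it simply cites it from \cite[Lemma 2.5]{Dai2}---so there is no in-paper proof to compare against; but the Schur-test scheme you use (kernel localization from Lemma~\ref{lem-1-1-sec1}, the two weighted/unweighted kernel integral bounds, H\"older with the split $\CK_{n,\ell}^{1/p}\cdot\CK_{n,\ell}^{1/p'}$, then Fubini against $w_n$) is exactly the standard argument for such lemmas in this circle of papers, and each step you write is sound: the unweighted bound holds once $\ell>d-1$, and inserting $w_n(x)\le C(1+n\rho(x,y))^{s_w}w_n(y)$ from Lemma~\ref{lem-2-1-ch3}(ii) gives the weighted bound once $\ell>d-1+s_w$. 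One small remark: your final choice $\ell>p(d-1+s_w)$ is more than needed---after the H\"older split the same kernel $\CK_{n,\ell}$ appears in both factors, so a single $\ell>d-1+s_w$ already makes both auxiliary integrals converge uniformly in $n$; the extra factor of $p$ is harmless but superfluous.
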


Before stating the main result in this section, we
  recall that,
 if the power $r/2$ is a positive integer, then for all  doubling weights $w$, the weighted Bernstein inequality \eqref{4-1-TD-new}
holds for the full range of  $0<p< \infty$, while this is no longer true when the power $r/2$ is non-integer. Indeed, for the latter case, we have the following main theorem,
which characterizes  those  weights $w$ for which
 the weighted Bernstein inequality
 \eqref{4-1-TD-new} holds.

\begin{thm}\label{thm-7-4} Assume that  $1\leq p<\infty$,  $w$ is  a doubling weight on $\sph$, and  $\tau\ge 0$.   Then the weighted Bernstein inequality  \eqref{4-1-TD-new},
with the constant $C$ independent of $n$ and $f$, holds for all $r>\tau$ if and only if $w\in\mathbb{A}_{p,\tau}$.
\end{thm}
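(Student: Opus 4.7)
The plan is to prove the two directions separately, using the local formulation \eqref{6-0-0} of the $\mathbb{A}_{p,\tau}$-condition from Lemma~\ref{lem-6-2}(v) as the bridge between the operator inequality and the weight condition.

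\textbf{Sufficiency.} Assume $w\in\mathbb{A}_{p,\tau}$ and fix $r>\tau$; pick an auxiliary $r_1\in(\tau,r)$ so that \eqref{6-0-0} is available at exponent $r_1$. Follow the Littlewood--Paley scheme from the proof of Theorem~\ref{thm4-2-TD}: with $2^{m-1}\leq n<2^m$, decompose $(-\Delta_0)^{r/2}f=\sum_{j=0}^m(-\Delta_0)^{r/2}L_j f$ and use the kernel bound $|L_{j,r}(\cos\t)|\leq c\,2^{j(r+d-1)}(1+2^j\t)^{-\ell}$. For each $j$, split the convolution integral into dyadic annuli $\rho(x,y)\sim 2^{k-j}$ about the evaluation point $x$, bound the $L^1$-average of $|f|$ on each cap $B(x,2^{k-j})$ by its weighted $L^p_{w_n}$-average through \eqref{6-0-0} at exponent $r_1$, and sum the $2^{-k\ell}$ kernel tail in $k$. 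This yields
\begin{equation*}
\|(-\Delta_0)^{r/2}L_j f\|_{p,w}^p\leq C\,2^{jrp}(n2^{-j})^{r_1 p}\|f\|_{p,w}^p,
\end{equation*}
and summation in $j$ gives $\|(-\Delta_0)^{r/2}f\|_{p,w}\leq Cn^r\|f\|_{p,w}$ because $r_1<r$.

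\textbf{Necessity.} Assume Bernstein holds for every $r>\tau$ and fix such an $r$. By Lemma~\ref{lem-6-2}(v), it suffices to establish \eqref{6-0-0}. By Lemma~\ref{lem-7-3} and Theorem~\ref{cor-3-4-TD}, Bernstein is equivalent to the uniform estimate
\begin{equation*}
\|(-\Delta_0)^{r/2}V_n\phi\|_{p,w}\leq Cn^r\|\phi\|_{p,w_n},\quad \phi\in L^p(\sph),\ n\in\NN,
\end{equation*}
for the integral operator with kernel $K_{n,r}$ from~\eqref{5-7-0}. The kernel satisfies $|K_{n,r}(\cos\rho)|\leq cn^{d-1+r}(1+n\rho)^{-L}$ for every $L$ by Lemma~\ref{lem-1-1-sec1} and is of order $n^{d-1+r}$ in a neighbourhood of the diagonal $\rho=0$ of size $\sim 1/n$. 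The plan is to test the operator inequality on $\phi=w_n^{-1/(p-1)}\chi_B$ (for $p>1$; $\phi=\chi_B$ for $p=1$), where $B=B(x_0,\t)$ is a given cap. In the small-cap regime $\t\leq c/n$, the near-diagonal lower bound produces a pointwise lower bound of $|(-\Delta_0)^{r/2}V_n\phi|$ on a $1/n$-neighbourhood of $B$; comparison with the Bernstein upper bound via the standard duality maneuver used to derive the $A_p$ condition from $L^p_w$-boundedness of the Hardy--Littlewood maximal function then delivers \eqref{6-0-0}. For the large-cap regime $\t>c/n$, run the same argument at the lower level $m\sim 1/\t<n$ and for a parameter $r'\in(\tau,r)$; the doubling comparisons $w_n\sim w_m$ on $B$ (valid because $B$ is much larger than $1/n$) and $w_n\leq(n/m)^{d-1}w_m$ pointwise, both consequences of Lemma~\ref{lem-2-1-ch3}, translate the resulting scale-$m$ inequality into the required scale-$n$ inequality, picking up the loss factor $(n/m)^{rp}\sim(1+n\t)^{rp}$.

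The chief technical obstacle lies in the necessity direction, specifically in producing the sharp exponent $r$ (rather than a cruder bound such as $d-1+r$ or $s_w$) in the loss factor $(1+n\t)^{rp}$ of \eqref{6-1-0}. No single application of Bernstein at a single value of $r$ yields this sharp exponent; one must optimize over $r'\in(\tau,r)$ and over the choice of lower level $m\leq n$. This is exactly why the class $\mathbb{A}_{p,\tau}$ is defined by an open family of conditions indexed by all $r>\tau$, and is also the reason the auxiliary $r_1\in(\tau,r)$ is needed in the sufficiency proof to render the dyadic sum in $j$ geometrically convergent.
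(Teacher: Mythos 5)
Your sufficiency sketch is a legitimate variant. The paper does not use the Littlewood--Paley blocks $L_j$ in this proof; instead it writes $(-\Delta_0)^{r/2}f=V_n f\ast K_{n,r}$, bounds the kernel by $cn^{d-1+r}(1+n\rho)^{-(d-1+r)}$ via Lemma~\ref{lem-3-3-DT}, converts the convolution to a one-dimensional integral $n^{1+r}\int_0^\pi(1+n\t)^{-(1+r)}|f_{B(x,\t)}|\,d\t$ by integration by parts, and then applies \eqref{6-0-0} at $r_1\in(\tau,r)$ inside a single H\"older estimate before using a Fubini argument over a $\t$-net. Your dyadic decomposition $\sum_j (-\Delta_0)^{r/2}L_j f$ with annular splitting of each block does essentially the same bookkeeping: in both cases the kernel tail decay dominates, the loss $(n\rho)^{r_1 p}$ is absorbed because $r_1<r$, and the geometric sum in $j$ (resp.\ the convergent $\t$-integral) closes. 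This part of your argument would work with careful attention to how the $w_n$-weighted average on each annulus is assembled into $\|f\|_{p,w_n}^p$ (which for you requires the triangle inequality over $j$ rather than summing $p$-th powers when $p>1$), but it is sound in spirit.

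The necessity direction has a genuine gap. The paper's argument hinges on the \emph{off-diagonal lower bound} for the fractional kernel, namely $|K_{n,r}(\cos\rho)|\sim \rho^{-(d-1+r)}$ for $A/n\leq\rho\leq\ve$ and $r\notin 2\NN$, which is Lemma~\ref{lem-3-4-TD}. Given a cap $B=B(x,\t)$, the paper places a second cap $B_2=B(x_2,\t)$ at distance $\sim\t$ from $B$, uses the lower bound to show $|(-\Delta_0)^{r/2}V_nf(z)|\sim\t^{-r}f_B$ for $z\in B_2$ and $f\geq 0$ supported in $B$, compares $w_n(B_2)\sim w_n(B)$ by doubling, and divides by $n^{rp}$ from Bernstein. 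This produces the factor $(n\t)^{rp}$ with the correct exponent directly. Your plan uses only the near-diagonal behaviour of $K_{n,r}$ (on a $1/n$-neighbourhood), which can handle $\t\lesssim 1/n$ but says nothing useful there since $w_n$ is essentially constant on such caps. For $\t\gg 1/n$ you propose to run the small-cap argument at level $m\sim 1/\t$ and transfer the resulting $w_m$-inequality to $w_n$. But the pointwise comparison ``$w_n\sim w_m$ on $B$'' is simply false for doubling weights (take $w(x)=|x_1|^\a$ near $x_1=0$: $w_n(x)\sim n^{-\a}$, $w_m(x)\sim m^{-\a}$). The valid pointwise comparisons are $w_n\leq (n/m)^{d-1}w_m$ and $w_m\leq C(n/m)^{s_w-(d-1)}w_n$ (Lemma~\ref{lem-2-1-ch3}), and the one you need in order to convert $\int_B|f|^p w_m$ into $\int_B|f|^p w_n$ is the second; it delivers a loss factor $(n\t)^{s_w-(d-1)}$, not $(n\t)^{rp}$. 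This is exactly the computation that proves every doubling weight is in $\mathbb{A}_{p,\tau}$ for $\tau=(s_w-(d-1))/p$ (Lemma~\ref{lem-6-2}(iii)); it cannot recover the \emph{characterizing} exponent. No optimization over $r'\in(\tau,r)$ or over $m$ repairs this, because the degradation comes from the weight transfer, not from the choice of Bernstein exponent. The off-diagonal kernel lower bound for non-even $r$ is indispensable and is the heart of the necessity direction; your proposal does not invoke it.
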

\begin{rem} Note that  Theorem \ref{thm-7-4} is new even in the case of trigonometric polynomials (i.e., $d=2$).
 Next, we would like to
  remark that the sufficiency part of this theorem implies
   Theorem \ref{thm4-2-TD} for $1\le p<\infty$. Indeed,
if $w$ is a doubling weight, then by  Lemma \ref{lem-6-2} (iii),
 $w\in \mathbb{A}_{p,\tau}$ with $\tau:= \f {s_w-(d-1)}{p}$
 and by Theorem \ref{thm-7-4},
  the Bernstein inequality  \eqref{5-4-0} holds.
\end{rem}

\begin{proof}Firstly, we show that if the weighted Bernstein inequality \eqref{5-4-0} holds for some positive $r\notin 2\NN$, then $w\in \mathbb{A}_{p,r}$.
Let $K\ge 5$ be a sufficiently large constant and $\va\in (0,1)$ a sufficiently small constant, both depending only on the dimension $d$. Let $B=B(x,\t)$ with $x\in \sph$ and $\f K n <\t\leq \f{\va}K$. Let $x_2\in\sph$ be such that $ 4\t\leq \rho(x, x_2) \leq K\t\leq \va$. Let $B_2:=B(x_2, \t)$. Then for a nonnegative function  $f$  supported in $B:=B(x,\t)$, and  an arbitrary $z\in B_2$, we have
\begin{align*}
|(-\Delta_0)^{r/2} V_n f(z)|&=\Bigl|\int_{B} f(y) K_{n,r}(z\cdot y)\, d\s(y) \Bigr|\sim \int_{B} f(y) \rho(z,y)^{-(d-1+r)}\, d\s(y)\\
&\sim \t^{-r} \f 1{|B|} \int_B f(y)\, d\s(y)\equiv \t^{-r} f_B,\end{align*}
where we used Lemma \ref{lem-3-4-TD} in the second step.
On the other hand, using the weighted Bernstein inequality \eqref{5-4-0},  we obtain
\begin{align*}
 |\t^{-r} f_B |^p w_n(B_2) &\leq c \|(-\Delta_0)^{r/2} V_n f\|_{p,w_n}^p \leq c n^{rp} \|V_n f\|_{p,w_n}^p\\
& \leq c n^{rp} \|f\|_{p,w_n}^p.
\end{align*}
Thus, for any nonnegative function $f$ supported in $B$,
\begin{equation}\label{6-5-0}
|f_B|^p w_n (B_2) \leq c (n\t)^{rp} \int_{B} |f(y)|^p w_n(y)\, d\s(y).\end{equation}
Since $w$ is a doubling weight, $w_n$ satisfies the doubling condition as well with $L_{w_n}\leq c L_w$. Since $B\subset B(x_2, 2K\t) =
2K B_2$, it follows that
$$ w_n (B_2) \ge c w_n ( 2K B_2) \ge c w_n (B).$$
This combined with \eqref{6-5-0} yields
$$ |f_B|^p \leq c (n\t)^{rp} \f 1{ w_n (B)} \int_{B} |f(y)|^p w_n(y)\, d\s(y).$$
Letting $f(y) =w_n(y)^{-\f {p'} p} \chi_B (y),$
 we conclude that
 \begin{equation}\label{6-6-0} \f {w_n(B)} {|B|} \Bigl( \f 1{|B|} \int_B w_n(y)^{-\f {p'}p} \, dy \Bigr)^{p-1}\leq c (1+n\t)^{rp},\end{equation}
 whenever $B=B(x,\t)$ with  $n^{-1} K\leq \t\leq \va/K$.
On the other hand, since $w_n(x) \sim w_n(y)$ whenever $\rho(x,y) \leq c n^{-1}$, \eqref{6-6-0} holds trivially if $n\t\leq K$.

 Next, we  show \eqref{6-6-0} for the case of $B=B(x,\t)$ and   $\va/K \leq \t\leq \pi$.  We first observe that
 $$
 \f {w_n(B)}{|B|} \Bigl( \f 1{|B|} \int_B w_n(y)^{-\f {p'}p}\, d\s(y) \Bigr)^{p-1}  \leq c_\va   \Bigl(  \int_B w_n(y)^{-\f 1{p-1}}\, d\s(y) \Bigr)^{p-1}.$$
 Since the ball $B=B(x,\t)$ can be covered by a number of $\leq C_d \va^{-d+1}$  spherical caps  of radius $\leq \va/K$, it follows that
  $$  \int_B w_n(y)^{-\f 1{p-1}}\, d\s(y) \leq C_\va \sup_{\text{rad} (B')=\va/K}  \int_{B'} w_n(y)^{-\f 1{p-1}}\, d\s(y).$$
  On the other hand, using the doubling   condition, it is easily seen that if $B'$ is a spherical cap with radius $\va/K$, then
 $w_n (B') \ge c_{\va} w_n (\sph) \ge c_\va'>0$.  Therefore we get
 \begin{align*}
 \Bigl(  \int_B w_n(y)^{-\f 1{p-1}}\, d\s(y) \Bigr)^{p-1} &\leq C_\va \sup_{\text{rad} (B')=\va/K} \Bigl( \int_{B'} w_n(y)^{-\f 1{p-1}}\, d\s(y)\Bigr)^{p-1}\\
 &\leq  C_\va \sup_{\text{rad} (B')=\va/K} \Bigl( \f 1{|B'|} \int_{B'} w_n(y)^{-\f 1{p-1}}\, d\s(y) \Bigr)^{p-1} \f {w_n(B')}{|B'|}
 \\
 &\leq c \,n^{rp}\leq c\, (n\t)^{rp},
 \end{align*}
where the third step uses \eqref{6-6-0} for the already proven case $\t=\va/K$. This completes the proof of necessity.

To show the sufficiency, we assume that $w\in \mathbb{A}_{p,\tau}$ and $r>\tau$.   Then for $f\in\Pi_n^d$,
\begin{align*}
(-\Delta_0)^{r/2} f(x) =(-\Delta_0)^{r/2} V_n f(x)=\int_{\sph} f(y) K_{n,r}(x\cdot y)\, d\s(y).
\end{align*}
Using Lemma \ref{lem-3-3-DT} and integration by parts, we have
\begin{align*}
|(-\Delta_0)^{r/2} f(x)|&\leq c n^{d-1+r} \int_{\sph} |f(y)|(1+n\rho(x,y))^{-(d-1+r)}\, d\s(y)\\
 &\leq  c \|f\|_1 + c n^{1+r} \int_0^\pi (1+n\t)^{-(1+r)}
|f_{B(x,\t)}|\, d\t,\\
&\leq  c n^r \|f\|_{p,w_n} +cJ(x),
\end{align*}
where  $J(x):=  n^{1+r} \int_0^\pi (1+n\t)^{-(1+r)}
|f_{B(x,\t)}|\, d\t$.  To estimate $J(x)$, we let Let $r_1\in (\tau, r)$, and  choose $\a, \b$ so that $\a+\b=1+r$, $\a>r_1+\f 1p$ and $\b>\f 1{p'}:=1-\f1p$. Then using H\"older's inequality, we obtain
\begin{align*}
J(x)^p&\leq c n^{(1+r)p}\Bigl( \int_0^\pi (1+n\t)^{-\a p}
|f_{B(x,\t)}|^p\, d\t\Bigr)\Bigl( \int_0^\pi (1+n\t)^{-\b p'}\, d\s(y)\Bigr)^{p-1}\\
&\leq c n^{rp+1}  \int_0^\pi (1+n\t)^{-\a p+r_1p} \f 1{ w_n (B(x,\t))}\int_{B(x,\t)} |f(y)|^p w_n (y)\, d\s(y)
 d\t ,\end{align*}
where we used Assertion (ii) of  Lemma \ref{lem-6-2} in the second step.
For $\t\in (0,\pi)$, let $\Ld_\t$ be a maximal $\t$-separated subset of $\sph$. Then
\begin{align*}
&\int_{\sph}  \Bigl[\f 1{ w_n (B(x,\t))}\int_{B(x,\t)} |f(y)|^p w_n (y)\, d\s(y)\Bigr] w_n(x)\, d\s(x)\\
&\leq \sum_{\o\in\Ld_\t} \int_{B(\o, \t)}  \Bigl[\f 1{ w_n (B(x,\t))}\int_{B(x,\t)} |f(y)|^p w_n (y)\, d\s(y)\Bigr] w_n(x) \, d\s(x)\\
&\leq c \sum_{\o\in\Ld_\t} \int_{B(\o, \t)}  \Bigl[\f 1{ w_n (B(\o,\t))}\int_{B(\o,3\t)} |f(y)|^p w_n (y)\, d\s(y)\Bigr]w_n(x)\, d\s(x)\\
&\leq c \sum_{\o\in\Ld_\t}   \int_{B(\o,3\t)} |f(y)|^p w_n (y)\, d\s(y)\leq c \|f\|_{p,w_n}^p,
\end{align*}
where the third step uses the doubling condition of $w_n$.
Thus,
\begin{align*}\|J\|_{p,w_n}^p\leq c n^{rp+1}  \int_0^\pi (1+n\t)^{-\a p+r_1p}\, d\t \, \|f\|_{p,w_n}^p\leq  c n^{rp} \|f\|_{p,w_n}^p.\end{align*}
This completes the proof of the sufficiency.
\end{proof}

Theorem \ref{thm-7-4} implies the following  interesting corollary  on the weighted   Bernstein inequality with respect to doubling weights.

\begin{cor} Given  a   doubling weight $w$ on $\sph$ with $d\ge 3$, if the weighted Bernstein inequality \eqref{5-4-0} holds for  some $p=p_1\in [1,\infty)$ and some positive number  $r=r_1$ which is not an even integer, then automatically, it holds for all $p_1\leq p<\infty$ and $r\ge r_1$.
\end{cor}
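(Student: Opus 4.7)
The plan is to funnel the hypothesis through the characterization in Theorem~\ref{thm-7-4} and use the monotonicity of the $\mathbb{A}_{p,\tau}$-classes provided by Lemma~\ref{lem-6-2}(ii) to upgrade the $L^p$-exponent. The key observation is that the necessity half of Theorem~\ref{thm-7-4} extracts membership in $\mathbb{A}_{p_1,r_1}$ from a \emph{single} Bernstein inequality at $r=r_1$, which is exactly what the corollary supplies, and this is where the hypothesis $r_1\notin 2\NN$ is used.

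First, I would inspect the necessity direction in the proof of Theorem~\ref{thm-7-4} and confirm that it draws the pointwise condition \eqref{6-6-0} from Bernstein at a single exponent $r=r_1$; the non-even-integer assumption on $r_1$ is what allows the sharp kernel asymptotic $|K_{n,r_1}(z\cdot y)|\sim \rho(z,y)^{-(d-1+r_1)}$ underlying that argument. From the hypothesis I thereby obtain \eqref{6-6-0} with $r=r_1$ and $p=p_1$, and since $(1+n\theta)^{rp}$ is monotone increasing in $r\ge 0$, this one inequality is equivalent to \eqref{6-6-0} for every $r\ge r_1$. By definition of the class, $w\in \mathbb{A}_{p_1,r_1}$.

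Next, since $r_1>0$, Lemma~\ref{lem-6-2}(ii) supplies the inclusion $\mathbb{A}_{p_1,r_1}\subset \mathbb{A}_{p,r_1}$ for every $p\in[p_1,\infty)$, whence $w\in\mathbb{A}_{p,r_1}$ throughout this range. Applying the sufficiency direction of Theorem~\ref{thm-7-4} with $\tau=r_1$ then delivers the weighted Bernstein inequality \eqref{4-1-TD-new} for every $r>r_1$ and $p\in[p_1,\infty)$ with constant independent of $n$ and $f$. Together with the hypothesis at $(r_1,p_1)$, this yields the claim on the full range $r\ge r_1$, $p\ge p_1$; the only missing corner $r=r_1$ with $p>p_1$ is obtained by letting $r\downarrow r_1$ in the already-proven estimate at fixed $f\in\Pi_n^d$, using that $(-\Delta_0)^{r/2}f$ depends continuously on $r$ on the finite-dimensional space $\Pi_n^d$ (after uniformizing the sufficiency constant by fixing admissible H\"older exponents).

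The main obstacle I foresee is the first step, namely the verification that the necessity proof of Theorem~\ref{thm-7-4} genuinely goes through with only one value of $r$. This hinges on the precise role of the hypothesis $r_1\notin 2\NN$: without it, the two-sided kernel bound used in that proof degenerates (for an even integer $r$, the operator $(-\Delta_0)^{r/2}$ is a genuine differential operator and the singular asymptotic $\rho(z,y)^{-(d-1+r)}$ is lost). A secondary technical point is the uniformization of the sufficiency constant as $r\downarrow r_1$, but this is manageable because the dependence on $r$ in that proof is only through the H\"older exponents $\alpha,\beta$ satisfying $\alpha+\beta=1+r$, $\alpha>r_1+1/p$, $\beta>1/p'$, and one has enough room to keep these uniformly admissible in any right-neighborhood of $r_1$.
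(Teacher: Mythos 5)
Your first two steps coincide with the paper's proof: extract $w\in\mathbb{A}_{p_1,r_1}$ from the necessity direction of Theorem~\ref{thm-7-4} (this is exactly where $r_1\notin 2\NN$ is used), then invoke the monotonicity of the $\mathbb{A}_{p,\tau}$-classes and the sufficiency direction to get the Bernstein inequality for all $p\ge p_1$ and $r>r_1$. That part is correct.

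The gap is in your treatment of the corner case $r=r_1$, $p>p_1$. You propose to let $r\downarrow r_1$ in the already-established estimate, claiming that the sufficiency constant in Theorem~\ref{thm-7-4} can be uniformized ``by fixing admissible H\"older exponents''. This cannot be done. In the sufficiency proof the exponents are constrained by $\a+\b=1+r$, $\b>1/p'$, and $\a>\tilde r+\tfrac1p$ for some $\tilde r$ strictly between $\tau=r_1$ and $r$. Eliminating $\b$, these force
$$\tilde r+\tfrac1p<\a< r+\tfrac1p,$$
an interval of width $r-\tilde r<r-r_1$, which shrinks to the empty set as $r\downarrow r_1$. There is no fixed admissible pair $(\a,\b)$ that works on a right-neighborhood of $r_1$. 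Worse, the constant arising from $\int_0^\pi(1+n\t)^{-\a p+\tilde r p}\,d\t$ behaves like $(\a p-\tilde r p-1)^{-1}$, and the one from $\int_0^\pi(1+n\t)^{-\b p'}\,d\t$ like $(\b p'-1)^{-1}$; both are driven to infinity as $r\downarrow r_1$, precisely because the admissible window for $(\a,\b)$ collapses. So the constant $C_r$ in your estimate $\|(-\Delta_0)^{r/2}f\|_{p,w}\le C_r n^r\|f\|_{p,w}$ is not bounded as $r\downarrow r_1$, and the limiting argument does not close.

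The paper handles this corner by a different device: starting from the hypothesis at $(p_1,r_1)$, one has $\|(-\Delta_0)^{r_1/2}V_nF\|_{p_1,w}\le Cn^{r_1}\|F\|_{p_1,w_n}$ for all $F\in L_{p_1}$ (via Theorem~\ref{cor-3-4-TD} and Lemma~\ref{lem-7-3}), and also the unweighted bound $\|(-\Delta_0)^{r_1/2}V_nF\|_\infty\le Cn^{r_1}\|F\|_\infty$; Riesz--Thorin interpolation then gives $\|(-\Delta_0)^{r_1/2}V_nF\|_{p,w}\le Cn^{r_1}\|F\|_{p,w_n}$ for all $p_1\le p\le\infty$, and restricting to $F=f\in\Pi_n^d$ yields the claim at $r=r_1$. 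You would need to replace your limiting step with an interpolation argument of this kind (or some other mechanism that does not route through the sufficiency constant of Theorem~\ref{thm-7-4}).
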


\begin{proof} Firstly, note that from the proof of Theorem \ref{thm-7-4}, if \eqref{5-4-0} holds for   $p=p_1\in [1,\infty)$ and $r=r_1\notin 2\NN$, then $w\in\mathbb{A}_{p_1, r_1}$. Since $\mathbb{A}_{p_1, r_1}\subset \mathbb{A}_{p, r}$ for all  $p\ge p_1$ and $r\ge r_1$, Theorem \ref{thm-7-4} implies that \eqref{5-4-0}  holds for all   $p_1\leq p <\infty$ and $r>r_1$. Thus, it remains to  show \eqref{5-4-0} for the case of $r=r_1$ and $p_1<p<\infty$. To see this, we first note that  for all $F\in L_{p_1}$,
\begin{align*}
&\|(-\Delta_0)^{{r_1}/2}V_n F\|_{p_1,w}\leq C  n^{r_1}\|V_n F\|_{p_1,w}\leq C n^{r_1}\|V_n F\|_{p,w_n}\leq C n^{r_1}\|F\|_{p,w_n},\end{align*}
where we used \eqref{5-4-0} with $r=r_1$ and $p=p_1$ in the first step, Theorem \ref{cor-3-4-TD} in the second step, and  Lemma \ref{lem-7-3} in the last step.
On the other hand,  using the  unweighted Bernstein inequality, and the boundedness of the operator $V_n$ on $L_\infty$,
$$\|(-\Delta_0)^{{r_1}/2}
V_n F\|_{\infty}\leq C n^{r_1} \|V_n F\|_\infty\leq C n^{r_1} \|F\|_\infty,\  \  \forall F\in L_\infty.$$
Thus, applying the Riesz-Thorin interpolation theorem, we deduce that
$$ \|(-\Delta_0)^{{r_1}/2}V_n F\|_{p,w}\leq C n^{r_1}\|F\|_{p,w_n},\   \    \forall F\in L_{p},\   \ p_1\leq p\leq \infty.$$
To complete the proof, we just note that $V_n f=f$ and
$\|f\|_{p,w}\sim \|f\|_{p,w_n}$ for all $f\in\Pi^d_n.$
\end{proof}

We conclude this section with the following example.

\begin{exam} Let  $w(x)=\prod_{j=1}^d |x_j|^{\a_j}$ and $1\leq p<\infty$.  From the proof of Proposition 6.1 in \cite{DW}, it is easy to verify that  if $\min_{1\leq j\leq d} \a_j > p-1$, then $w\in \mathbb{A}_{p,\tau_{w,p}}$  but $w\notin \mathbb{A}_{p, \xi}$ for any $\xi<\tau_{w,p}$, where $$
\tau_{w,p}:=\f {s_w}p -(d-1) =\f1p\Bigl(\sum_{j=1}^d \a_j -\min_{1\leq j\leq d} \a_j\Bigr) -(1-\f1p)(d-1).
$$
   Thus, in this case, the weighted Bernstein inequality  \eqref{5-4-0} holds for  $r>\tau_{w,p}$, and fails for $0<r<\tau_{w,p}$.
\end{exam}

\section{Approximation in $L^p$-spaces with $0<p<1$}

Recall that the generalized de la Vall\'ee Poussin mean  $V_n f$ is defined by \eqref{2-3} for all $f\in L_p$ with $1\leq p\leq \infty$. It can be easily seen from the definition that
$V_n g=g$ for $g\in \Pi_n^d$, and for all $f\in L_p$ with $1\leq p\leq \infty$, \begin{equation}\label{2-11-eq}
E_{2n}(f)_p \leq \|f-V_n f\|_p\leq C_d E_n(f)_p.
\end{equation}
This last fact, however, cannot be true for $0<p<1$, in which case, $V_n f$ is not  even  defined for all $f\in L_p$. In this section, we shall prove that given a function $f\in L_p$ with $0<p<1$, there always exists a Fourier-Laplace series on $\sph$ whose generalized de la Vall\'ee Poussin mean converges to $f$ in $L_p$-norm, and an estimate weaker than \eqref{2-11-eq} remains true. The idea of using generalized de la Vall\'ee Poussin means of Fourier series to approximate  functions in $L_p$ with $0<p<1$ goes back to Oswald
\cite{osw}.

Given a Fourier-Laplace series
\begin{equation}\label{5-1-TD}\sa\sim \sum_{k=0}^\infty Y_k(x),\   \  Y_k\in\HH_k^d,\end{equation}
we define
$ V_n \sa :=\sum_{k=0}^{2n} \eta(\f kn) Y_k(x),$ and
and $S_n \sa:=\sum_{k=0}^n Y_k(x).$

Our main result in this section is the following.

\begin{thm}\label{thm-5-1-TD} If $0<p<1$ and $f\in L^p(\sph)$, then
there exists a Fourier-Laplace series $\sa$ of the form \eqref{5-1-TD} such that
\begin{equation}\label{5-2-TD}
\|f-V_n\sa\|_p \leq C_p n^{-(d-1)(\f 1p-1)} \Bigl( \sum_{k=1}^n k^{d-2-(d-1)p} E_k (f)_p^p\Bigr)^{\f1p}.\end{equation}
If, in addition,
\begin{equation}\label{5-3-TD}
\sum_{k=1}^\infty k^{d-2-(d-1)p}E_k(f)_p^p<\infty,\end{equation}
then $f\in L_1$,   and one has the following stronger estimate:
$$ \|f-V_n f\|_p\leq C n^{-(d-1)(\f 1p-1)} \Bigl( \sum_{k={n+1}}^\infty k^{(d-2)-(d-1)p}E_k(f)_p^p\Bigr)^{\f1p}.$$
\end{thm}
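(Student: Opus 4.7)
The plan follows Oswald's dyadic approach. Pick near-best approximants $g_m\in\Pi_{2^m}^d$ with $\|f-g_m\|_p\le 2 E_{2^m}(f)_p$, and set $h_m:=g_m-g_{m-1}$ (with $g_0=0$). The $p$-triangle inequality then gives $\|h_m\|_p^p\le C E_{2^{m-1}}(f)_p^p$ and $f=\sum_m h_m$ in $L^p$. The formal Fourier--Laplace series $\sigma$ of the first assertion is built by combining compatible spherical harmonic coefficients of the $g_m$'s; for instance, setting $Y_j:=\proj_j(g_{m_j})\in\HH_j^d$ with $m_j:=\lceil\log_2(j+1)\rceil+2$ makes each $Y_j$ well defined, and then $V_n\sigma=\sum_{j\le 2n}\eta(j/n)Y_j\in\Pi_{2n}^d$ is assembled from finitely many $g_m$.

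For the second assertion, I first show that the hypothesis $\sum_k k^{d-2-(d-1)p}E_k(f)_p^p<\infty$ forces $f\in L^1$. Dyadic bunching (valid since the exponent $d-2-(d-1)p>-1$ for $0<p<1$) converts the hypothesis into $\sum_m 2^{m(d-1)(1-p)}E_{2^m}(f)_p^p<\infty$. Applying the $p$-power subadditivity $\sum_m a_m\le(\sum_m a_m^p)^{1/p}$ (valid for $a_m\ge 0$ and $0<p\le 1$) with $a_m:=2^{m(d-1)(1/p-1)}E_{2^m}(f)_p$, and combining with Nikolskii's inequality (Lemma~\ref{lem-2-6-TD}) $\|h_m\|_1\le C 2^{m(d-1)(1/p-1)}\|h_m\|_p$, one obtains $\sum_m\|h_m\|_1<\infty$. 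Hence $\sum h_m$ converges in $L^1$ to $f$, so $V_nf=\sum_j\eta(j/n)\proj_j(f)$ is well defined as the usual de la Vall\'ee Poussin mean.

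For the quantitative bound, fix $n$ with $2^N\le n<2^{N+1}$ and write $f-V_nf=(f-g_N)-V_n(f-g_N)$, using $V_ng_N=g_N$. The term $\|f-g_N\|_p\le 2E_n(f)_p$ is absorbed into the right-hand side by the elementary monotonicity inequality $E_n(f)_p^p\le C n^{-(d-1)(1-p)}\sum_{n/2<k\le n}k^{d-2-(d-1)p}E_k(f)_p^p$, which follows from $E_k\ge E_n$ for $k\le n$ and $\sum_{k\in(n/2,n]}k^{d-2-(d-1)p}\sim n^{(d-1)(1-p)}$. For the tail, decompose $f-g_N=\sum_{m>N}h_m$ in $L^1$ and apply the key convolution estimate
\[\|V_n h_m\|_p\le C\|K_n\|_p\,\|h_m\|_1\le C n^{-(d-1)(1/p-1)}\|h_m\|_1,\]
where the direct computation $\|K_n\|_p\sim n^{-(d-1)(1/p-1)}$ follows from the pointwise kernel decay of Lemma~\ref{lem-1-1-sec1}. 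Combining this with Nikolskii and the dyadic $p$-subadditivity produces $\|V_n(f-g_N)\|_p^p\le C n^{-(d-1)(1-p)}\sum_{k>n}k^{d-2-(d-1)p}E_k(f)_p^p$, completing the second assertion. The first assertion follows analogously, with the monotonicity bound absorbing the remainder into $\sum_{k=1}^n$ rather than $\sum_{k>n}$.

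The main obstacle is the quasi-Banach Young-type inequality $\|V_n h_m\|_p\le C\|K_n\|_p\|h_m\|_1$: since $V_nh_m$ is a spherical convolution of $h_m$ with $K_n$, and the classical Young inequality breaks down for $0<p<1$, this bound requires a hands-on proof. I would establish it by discretizing $V_nh_m\in\Pi_{2n}^d$ via the Marcinkiewicz--Zygmund formula (Lemma~\ref{lem-2-1-TD}), applying the discrete $p$-triangle inequality, and using the pointwise decay $|K_n(\cos\theta)|\le C n^{d-1}(1+n\theta)^{-\ell}$ (Lemma~\ref{lem-1-1-sec1}) with $\ell$ large, together with the polynomial maximal-function estimate of Theorem~\ref{cor-3-2-ch3} to control the integrand at the MZ nodes.
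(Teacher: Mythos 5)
Your dyadic plan for the second assertion is essentially right, but it hinges on a convolution estimate that is actually false. The inequality $\|V_n h\|_p\le C\|K_n\|_p\|h\|_1$ does not hold for $0<p<1$. To see why, take $h$ to be a sum of $N\sim n^{d-1}$ needle polynomials of very high degree, $L^1$-normalized and centered at points that are $\gtrsim n^{-1}$ separated. Then $\|h\|_1\sim N$, while $V_n h$ is close to $\sum_j K_n(\cdot\cdot\omega_j)$, whose pieces have essentially disjoint effective supports at scale $n^{-1}$, so $\|V_n h\|_p^p\sim N\|K_n\|_p^p$, i.e., $\|V_n h\|_p\sim N^{1/p}\|K_n\|_p$. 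For $p<1$ this exceeds $N\|K_n\|_p$ by a factor $N^{1/p-1}\to\infty$; the MZ discretization you propose would surface exactly this loss rather than close the gap. The correct way to obtain the bound you ultimately need is not to pass through $\|h_m\|_1$ at all: apply the Nikolskii inequality (Lemma~\ref{lem-2-6-TD} with $w\equiv1$, $q=1$) \emph{inside} the integral to the polynomial $y\mapsto h_m(y)K_n(x\cdot y)$, of degree $\lesssim 2^m$, yielding
$$
|V_n h_m(x)|^p\le C\,2^{m(d-1)(1-p)}\int_{\sph}|h_m(y)|^p|K_n(x\cdot y)|^p\,d\s(y),
$$
and then integrate over $x$ to get $\|V_n h_m\|_p^p\le C(2^m/n)^{(d-1)(1-p)}\|h_m\|_p^p$. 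Your chain ``Young then Nikolskii'' happens to produce the same final bound because the Nikolskii loss exactly compensates the Young loss, but the intermediate inequality you isolate as the main lemma is simply not true, so the argument as written is broken. (This corrected estimate is precisely how the paper handles the second assertion, after first establishing $f\in L^1$ as you do via Lemma~\ref{lem-5-5-TD}.)

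The first assertion is a separate and deeper issue. You define $\sigma$ by $Y_j:=\proj_j(g_{m_j})$ and assert the estimate ``follows analogously,'' but there is no telescoping here: $V_n\sigma=\sum_{j\le 2n}\eta(j/n)\proj_j(g_{m_j})$ mixes projections of \emph{different} near-best approximants, and $f-V_n\sigma$ does not reduce to $V_n$ applied to a tail of $h_m$'s. In particular you cannot write $f-V_n\sigma=(f-g_N)-V_n(f-g_N)$, which is the identity that drives the second part. The paper's Lemma~\ref{lem-5-4-TD} constructs a genuinely different series: for each dyadic piece $g_j\in\Pi_{2^j}^d$, it replaces the Dirichlet-type reproducing identity by its cubature-discretized version and obtains a series $\s_j$ with $S_{2^j}\s_j=g_j$ \emph{and} the crucial decay $\|V_n\s_j\|_p\le C(2^j/n)^{(d-1)(1/p-1)}\|g_j\|_p$ for all $n\ge 2^j$. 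It is this decay, unavailable from your construction, that makes the first-part estimate work without any tail summability assumption on $E_k(f)_p$. You need to either adopt that construction or supply an alternative $\sigma$ for which you can actually prove a quantitative bound on $\|V_n\sigma\|_p$ in terms of $\|g_j\|_p$.
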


\begin{rem}
It is worth mentioning that the term on the right-hand side of (\ref{5-2-TD}) tends to $0$ as $n\to \infty$ and therefore
(\ref{5-2-TD}) can be considered as a generalization of Oswald's result \cite{osw} on $\sph$.
\end{rem}

In the case of periodic functions, Theorem \ref{thm-5-1-TD} is due to Belinskii and   Liflyand \cite{BL}.

The proof of Theorem \ref{thm-5-1-TD} relies on several lemmas.
\begin{lem}\label{lem-5-2-TD}
Assume that    $f\in \Pi_{6n}^d$, and  $G_n:[-1,1]\to\RR$ is an algebraic polynomial of degree at most $n$. If $0<p<1$,  then
\begin{equation}\label{5-4-TD}
\Bigl( \int_{\sph} \Bigl|\int_{\sph} f(y) G_n(x\cdot y)\, d\s(y)\Bigr|^p\, d\s(x)\Br)^{\f1p}\leq C n^{(\f 1p-1)(d-1)}\|f\|_p\|G_n\|_{p,\a,\b},\end{equation}
where $\a=\b=\f{d-3}2$.
\end{lem}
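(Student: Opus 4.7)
The plan is to discretize the convolution integral via a Marcinkiewicz--Zygmund (MZ) cubature formula and then exploit the $p$-subadditivity $|\sum a_i|^p \leq \sum |a_i|^p$ that is available precisely because $0<p<1$. The key observation is that for fixed $x$, the integrand $y \mapsto f(y) G_n(x \cdot y)$ is a polynomial in $y$ of degree at most $7n$, so a cubature formula that is exact on polynomials of sufficiently high degree converts the integral to a finite sum, after which $p$-subadditivity (which works in the ``wrong'' direction for $p\ge 1$) becomes the main engine.

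Concretely, I would fix $\delta \in (0,\delta_0]$ and take a maximal $\frac{\delta}{Kn}$-separated subset $\Lambda \subset \sph$ for a sufficiently large absolute constant $K$, with corresponding weights $\lambda_\omega \sim n^{-(d-1)}$, so that (by Lemma~\ref{lem-2-1-TD} applied to $w\equiv 1$) the cubature $\int_{\sph} g\, d\sigma = \sum_{\omega\in\Lambda}\lambda_\omega g(\omega)$ is exact on $\Pi^d_{7n}$ and the MZ equivalence $\sum_{\omega\in\Lambda}\lambda_\omega|g(\omega)|^p \sim \|g\|_p^p$ holds for $g\in\Pi^d_{6n}$. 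Writing $h(x):=\int_{\sph} f(y)G_n(x\cdot y)\,d\sigma(y)$, the exactness gives
\begin{equation*}
h(x) = \sum_{\omega\in\Lambda}\lambda_\omega f(\omega) G_n(x\cdot \omega),
\end{equation*}
and $p$-subadditivity then yields $|h(x)|^p \leq \sum_{\omega\in\Lambda}\lambda_\omega^p |f(\omega)|^p |G_n(x\cdot\omega)|^p$.

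Integrating in $x$ and using rotation invariance of $d\sigma$, the inner integral $\int_{\sph}|G_n(x\cdot\omega)|^p\,d\sigma(x)$ is independent of $\omega$ and equals $|\SS^{d-2}|\int_0^\pi|G_n(\cos\theta)|^p(\sin\theta)^{d-2}\,d\theta$. Since $2\alpha+1=2\beta+1=d-2$ for $\alpha=\beta=\frac{d-3}{2}$, this integral is comparable to $\|G_n\|_{p,\alpha,\beta}^p$ (using $(\sin\theta)^{d-2}\sim(\sin(\theta/2))^{2\alpha+1}(\cos(\theta/2))^{2\beta+1}$ on $[0,\pi]$). Finally, since $\lambda_\omega\sim n^{-(d-1)}$, we have $\lambda_\omega^p \sim n^{(1-p)(d-1)}\lambda_\omega$, and the MZ equivalence gives $\sum_{\omega\in\Lambda}\lambda_\omega|f(\omega)|^p \sim \|f\|_p^p$. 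Combining these three factors yields
\begin{equation*}
\|h\|_p^p \leq C\, n^{(1-p)(d-1)} \|G_n\|_{p,\alpha,\beta}^p\, \|f\|_p^p,
\end{equation*}
which after taking $p$-th roots is exactly \eqref{5-4-TD}.

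The argument is essentially routine given the tools already developed in Sections~2--3; the only step that demands some care is verifying that the chosen $\Lambda$ simultaneously supports an exact cubature on $\Pi^d_{7n}$ (needed to discretize $h(x)$ for each $x$, since $f\cdot G_n(x\cdot)$ has degree up to $6n+n=7n$ in $y$) and an MZ-type equivalence for $\Pi^d_{6n}$ (needed in the final step); both are guaranteed by choosing the separation constant $K$ in $\delta/(Kn)$ large enough, and constitute the only mildly delicate point of the proof.
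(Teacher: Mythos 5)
Your proof is correct, but it takes a genuinely different route than the paper's, which is considerably more compact. The paper observes that for each fixed $x$ the function $y\mapsto f(y)G_n(x\cdot y)$ lies in $\Pi^d_{7n}$, so the inner integral is bounded in absolute value by its $L^1$-norm, and the unweighted Nikolskii inequality (Lemma~\ref{lem-2-6-TD} with $w\equiv1$, $q=1$) gives at once
$\bigl|\int_{\sph}f(y)G_n(x\cdot y)\,d\s(y)\bigr|^p\le C\,n^{(d-1)(1-p)}\int_{\sph}|f(y)G_n(x\cdot y)|^p\,d\s(y)$;
integrating in $x$ and using Fubini together with rotation invariance finishes the proof in two lines. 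Your version instead discretizes through the cubature formula and Marcinkiewicz--Zygmund equivalence of Lemma~\ref{lem-2-1-TD}, uses $p$-subadditivity, and recovers the same power of $n$ from $\lambda_\omega^{p}\sim n^{(d-1)(1-p)}\lambda_\omega$. This is, in effect, an in-place re-derivation of the Nikolskii inequality that the paper cites as a black box, so the underlying mechanism is identical; but your route invokes heavier machinery and requires reconciling two degree constraints (cubature exactness for $\Pi^d_{7n}$, MZ equivalence for $\Pi^d_{6n}$), which you correctly flag as the one delicate point and handle by shrinking the separation scale. Both requirements are indeed available from Lemma~\ref{lem-2-1-TD}; still, since Nikolskii's inequality is already in the toolkit, the paper's shorter argument is the preferable one here.
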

\begin{proof} The desired inequality \eqref{5-4-TD} follows directly from  Lemma \ref{lem-2-6-TD} applied to  $w=1$, $q=1$ and $0<p<1$:
\begin{align*}
 \int_{\sph} \Bigl|\int_{\sph} f(y) G_n(x\cdot y)\, &d\s(y)\Bigr|^p\, d\s(x)\\
 &\leq C n^{(d-1)(1-p)} \int_{\sph} \int_{\sph} |f(y) G_n(x\cdot y)|^p\, d\s(y)\, d\s(x)\\
 &=C n^{(d-1)(1-p)}\|f\|_p^p\|G_n\|_{p,\a,\b}^p.
\end{align*}
\end{proof}

\begin{lem}\label{lem-5-3-TD}
If $0<p<1$,  and $f\in \Pi_{6n}^d$, then
$$\|V_n f\|_p\leq C_p\,\|f\|_p.$$\end{lem}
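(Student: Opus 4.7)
The approach is to combine the convolution representation of $V_n f$ with the Nikol'skii-type convolution bound in Lemma \ref{lem-5-2-TD} and the sharp localization estimate for $K_n$ from Lemma \ref{lem-1-1-sec1}.

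First I would recall that $V_n f(x)=\int_{\sph}f(y)K_n(x\cdot y)\,d\s(y)$, where, by \eqref{1-9-TD}, $K_n$ is an algebraic polynomial in $t$ of degree at most $2n$. Since $f\in\Pi_{6n}^d\subset\Pi_{12n}^d$, I can apply Lemma \ref{lem-5-2-TD} with $n$ replaced by $2n$ and $G_{2n}:=K_n$, obtaining
\begin{equation*}
\|V_n f\|_p\leq C(2n)^{(1/p-1)(d-1)}\,\|f\|_p\,\|K_n\|_{p,\a,\b},\qquad \a=\b=\tfrac{d-3}{2}.
\end{equation*}

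The task then reduces to showing $\|K_n\|_{p,\a,\b}\leq Cn^{(d-1)(1-1/p)}$. Invoking Lemma \ref{lem-1-1-sec1} with $\vi=\eta$ and $\ell$ chosen large enough that $\ell p>d-1$ yields
\begin{equation*}
|K_n(\cos\t)|\leq Cn^{d-1}(1+n\t)^{-\ell},\qquad \t\in[0,\pi].
\end{equation*}
Plugging this into \eqref{Lebesgue-norm}, using $2\a+1=d-2$, and substituting $u=n\t$ in the resulting integral gives
\begin{equation*}
\|K_n\|_{p,\a,\b}^p\leq C\,n^{(d-1)p-(d-1)}\int_0^{\infty}(1+u)^{-\ell p}u^{d-2}\,du\leq Cn^{(d-1)(p-1)},
\end{equation*}
so indeed $\|K_n\|_{p,\a,\b}\leq Cn^{(d-1)(1-1/p)}$.

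Multiplying the two bounds, the powers of $n$ cancel exactly and one concludes $\|V_n f\|_p\leq C\|f\|_p$, as desired. Because both ingredients have already been established earlier in the paper, there is no substantial obstacle; the only genuine computation is the routine change of variables in the Jacobi-weight integral producing the kernel-norm estimate. The one point deserving care is the degree bookkeeping: the scale factor $2$ in the kernel (degree $2n$ rather than $n$) forces the use of the inclusion $\Pi_{6n}^d\subset\Pi_{12n}^d$ so that Lemma \ref{lem-5-2-TD} may be legitimately applied at the rescaled index.
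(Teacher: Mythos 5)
Your argument is correct and follows essentially the same route as the paper: represent $V_n f$ as convolution with $K_n$, apply Lemma~\ref{lem-5-2-TD}, and bound $\|K_n\|_{p,\a,\b}$ by $Cn^{(d-1)(1-1/p)}$. The only difference is cosmetic: the paper simply cites the kernel-norm estimate \eqref{3-6} (whose upper bound is proved in the appendix by exactly the computation you reproduce from Lemma~\ref{lem-1-1-sec1}), whereas you rederive it inline and explicitly handle the degree bookkeeping ($K_n$ of degree $2n$, $\Pi_{6n}^d\subset\Pi_{12n}^d$) that the paper passes over silently.
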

\begin{proof}By \eqref{2-3}, we have
$$ V_n f(x)=\int_{\sph}f(y)K_n(x\cdot y)\, d\s(y),$$
where $K_n=G_n^{(\f{d-3}2,\f{d-3}2)}$ is given by \eqref{1-9-TD}.
Thus, using Lemma \ref{lem-5-2-TD} with $\a=\b=\f{d-3}2$, we deduce
$$\|V_nf\|_p\leq C  n^{(\f 1p-1)(d-1)}\|f\|_p\|G_n^{(\a,\b)}\|_{p,\a,\b}\leq C \|f\|_p,$$
where the last step uses \eqref{3-6}.
\end{proof}

The following lemma plays a crucial role in the proof of Theorem \ref{thm-5-1-TD}.
\begin{lem}\label{lem-5-4-TD}Assume that $f\in \Pi_k^d$, and  $0<p<1$. Then there exists a Fourier-Laplace series of the form \eqref{5-1-TD} such that $S_k \sa=f$, and for all $n\ge k$,
\begin{equation}
\|V_n\sa\|_p \leq C \Bigl( \f kn\Bigr)^{(d-1)(\f 1p-1)}\|f\|_p.\end{equation}
\end{lem}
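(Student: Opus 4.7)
My approach is to write down an explicit formal Fourier--Laplace series $\sigma$ using the cubature/MZ machinery of Lemma \ref{lem-2-1-TD}. Fix $\delta\in(0,\delta_0]$ and let $\Lambda$ be a maximal $(\delta/k)$-separated subset of $\sph$. Applying Lemma \ref{lem-2-1-TD} with $w\equiv 1$ and parameter $k$, I obtain positive weights $\lambda_\omega\sim k^{-(d-1)}$ for which the exact cubature formula holds on $\Pi_{4k}^d$ and the Marcinkiewicz--Zygmund equivalence $\|g\|_p^p\sim \sum_{\omega\in\Lambda}\lambda_\omega|g(\omega)|^p$ holds for $g\in\Pi_k^d$. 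With $\alpha=\beta=(d-3)/2$ and $C_d$ the normalization constant appearing in \eqref{2-2}, I will declare
\[ Y_j(x):=C_d\sum_{\omega\in\Lambda}\lambda_\omega f(\omega)E_j^{(\alpha,\beta)}(x\cdot\omega),\qquad \sigma:=\sum_{j\ge 0}Y_j. \]

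The two required properties can then be checked as follows. For $0\le j\le k$ the function $y\mapsto f(y)E_j^{(\alpha,\beta)}(x\cdot y)$ lies in $\Pi_{2k}^d\subset\Pi_{4k}^d$, so the cubature formula combined with the integral representation \eqref{2-2} identifies $Y_j$ with $\proj_j f$, giving $S_k\sigma=f$. Since $V_n\sigma=\sum_{j\le 2n}\eta(j/n)Y_j$ is a finite sum, I can interchange the two summations and invoke \eqref{1-9-TD} to obtain the clean representation
\[ V_n\sigma(x)=\sum_{\omega\in\Lambda}\lambda_\omega f(\omega)K_n(x\cdot\omega),\qquad n\ge k. \]

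For the final estimate, I will use $p$-subadditivity for $0<p<1$ to bound
\[ \|V_n\sigma\|_p^p\le\sum_{\omega\in\Lambda}\lambda_\omega^p|f(\omega)|^p\int_{\sph}|K_n(x\cdot\omega)|^p\,d\sigma(x). \]
Since $\lambda_\omega^p\sim k^{(d-1)(1-p)}\lambda_\omega$, the MZ equivalence yields $\sum_\omega\lambda_\omega^p|f(\omega)|^p\sim k^{(d-1)(1-p)}\|f\|_p^p$. The quantitative heart of the argument is the estimate $\int_{\sph}|K_n(x\cdot\omega)|^p\,d\sigma(x)\sim n^{(d-1)(p-1)}$, which I will derive by inserting the pointwise bound $|K_n(\cos\theta)|\lesssim n^{d-1}(1+n\theta)^{-\ell}$ from Lemma \ref{lem-1-1-sec1} (with $\ell$ chosen so that $\ell p>d-1$) and splitting the integral at $\theta\sim 1/n$: the near-diagonal cap contributes $\sim n^{(d-1)p}\cdot n^{-(d-1)}$, and the far region is absorbed by the rapid decay. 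Putting these pieces together gives $\|V_n\sigma\|_p^p\lesssim(k/n)^{(d-1)(1-p)}\|f\|_p^p$, which is the desired inequality after taking a $p$-th root. The main technical obstacle is precisely this quasi-norm kernel computation; the remaining steps (identifying $Y_j=\proj_j f$ for $j\le k$, the interchange of sums, and the application of MZ) are routine consequences of the tools assembled earlier.
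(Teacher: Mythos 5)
Your proof is correct, but it takes a genuinely different (and, I think, cleaner) route than the paper. The paper defines $\sigma(x)=\sum_{j}\sum_{\omega\in\Lambda_k}\lambda_\omega E_j^{(\alpha,\beta)}(x\cdot\omega)\,f(x)$, i.e.\ with the multiplier $f(x)$ rather than the sampled value $f(\omega)$. With that choice, $V_n\sigma$ becomes a sum of terms $V_n^{(x)}\big[K_{3n}(x\cdot\omega)f(x)\big]$, and to estimate its $L^p$-quasinorm the paper first invokes Lemma~\ref{lem-5-3-TD} (boundedness of $V_n$ on $L^p$, $0<p<1$, for polynomials) to strip off the outer $V_n$, and then must decouple $f(x)$ from the kernel inside the integral; this is done via the polynomial maximal function $f^\ast_{k,\xi}$ and Theorem~\ref{cor-3-2-ch3}, bounding $|f(x)|\le f^\ast_{k,\xi}(\omega)(1+k\rho(x,\omega))^{\xi}$ and absorbing the extra power into the localization of $K_{3n}$. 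Your version, by sampling $f$ at the cubature nodes, turns $V_n\sigma$ into the finite \emph{discrete} sum $\sum_\omega\lambda_\omega f(\omega)K_n(x\cdot\omega)$, so that after $p$-subadditivity the $f$-dependence is already sitting at the nodes and is handled directly by the Marcinkiewicz--Zygmund equivalence \eqref{2-2-TD}; the only analytic input left is the single kernel integral $\int_\sph|K_n(x\cdot y)|^p\,d\sigma(x)\sim n^{(d-1)(p-1)}$, which follows from Lemma~\ref{lem-1-1-sec1} as you say. You avoid both Lemma~\ref{lem-5-3-TD} and the maximal function machinery. The identification $Y_j=\proj_j f$ for $j\le k$ is correct because $f(\cdot)E_j^{(\alpha,\beta)}(x\cdot\,\cdot)\in\Pi_{2k}^d\subset\Pi_{4k}^d$ so the cubature is exact, and each $Y_j$ lies in $\HH_j^d$ since $x\mapsto E_j^{(\alpha,\beta)}(x\cdot\omega)$ is a degree-$j$ zonal harmonic; so $\sigma$ is a genuine Fourier--Laplace series. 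The two constructions produce different tails $\{Y_m\}_{m>k}$, but that is irrelevant since the lemma is an existence statement.
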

\begin{proof}  Let $\Ld_k$ be a maximal $\f {\d}k$-separated subset of $\sph$, with  $\d\in (0,1)$ being a small constant depending only on $d$.
We denote by $N_k$ the number of points in the set $\Ld_k$. Then $N_k\sim k^{d-1}$, and by Lemma \ref{lem-2-1-TD}, there exists a positive cubature formula of degree $k$ on $\sph$,
\begin{equation}\label{5-6--TD}
\int_{\sph} P(y)\, d\s(y) =\sum_{\o\in\Ld_k} \l_\o P(\o),\   \   \forall P\in\Pi_k^d,
\end{equation}
such that $\l_\o\sim N_k^{-1}$ for all $\o\in\Ld_k$.  Define
\begin{equation}\label{5-6-TD}\s(x):=\sum_{j=0}^\infty  \sum_{\o\in\Ld_k}\l_\o E_j^{(\f{d-3}2, \f{d-3}2)}(x\cdot \og) f(x).\end{equation}
Clearly, by the cubature formula \eqref{5-6--TD},
\begin{align*}
S_k\sa&=\sum_{j=0}^k  \Bigl[\sum_{\o\in\Ld_k}\l_\o E_j^{(\f{d-3}2, \f{d-3}2)}(x\cdot \og)\Bigr] f(x)\\
&=f(x)\sum_{j=0}^k \int_{\sph}E_j^{(\f{d-3}2, \f{d-3}2)}(x\cdot y)\, d\s(y) =f(x).\end{align*}

Since for each $\o\in\Ld_k$, $E_j^{(\f{d-3}2, \f{d-3}2)}(x\cdot \og)$, as a function of $x\in\sph$, is a spherical harmonic of degree $j$, it follows that
$$  E_j^{(\f{d-3}2, \f{d-3}2)}(x\cdot \og) f(x)\in \sum_{\ell=|k-j|}^{k+j}\HH_\ell^d.$$
We can rewrite \eqref{5-6-TD} in the form $\sum_{m=0}^\infty Y_m(x)$, with
$$ Y_m(x) =\sum_{j=0}^{m+k} \f 1{N_k} \sum_{\o\in\Ld_k}\proj_m\Bigl[fE_j^{(\f{d-3}2, \f{d-3}2)}(\la\cdot,  \og\ra) \Bigr](x)\in \HH_m^d.$$
This also implies that  $V_n^{(x)}[ E_j^{(\f{d-3}2, \f{d-3}2)}(x\cdot \og) f(x)]=0$ whenever $j\ge 2n+k$, where we use the notation $V_n^{(x)}$ to mean that the operator $V_n$ acts on the variable $x$.  Therefore, setting   $P_j=E_j^{(\f{d-3}2, \f{d-3}2)}$, we obtain
\begin{align*}
V_n\sa&=\f 1{N_k} \sum_{\o\in\Ld_k} V_n^{(x)} \Bigl[ \sum_{j=0}^{2n+k} P_j(x\cdot \og) f(x)\Bigr]=\f 1{N_k} \sum_{\o\in\Ld_k}V_n^{(x)} \Bigl[ \sum_{j=0}^{6n} \eta(\f j{3n})P_j(x\cdot \og) f(x)\Bigr]\\
&=\f 1{N_k} \sum_{\o\in \Ld_k} V_n^{(x)} \Bigl[ K_{3n} (x\cdot \og) f(x)\Bigr],
\end{align*}
where the function $K_{3n}$ is defined by \eqref{1-9-TD}.  Letting $\xi>\f{d-1}p$,  we have
\begin{align*}
\|V_n \sa\|_p^p &\leq C N_k^{-p}\sum_{\o\in\Ld_k}\int_{\sph} |K_{3n} (x\cdot \og) f(x)|^p\, d\s(x)\\
&\leq
C  \Bigl( N_k^{-p}\sum_{\o\in\Ld_k} |f_{k,\xi}^\ast (\og)|^p \Bigr)\sup_{y\in \sph} \int_{\sph} |K_{3n}(x\cdot y)|^p (1+k \rho(x,y))^{\xi p}\, d\s(x) \\
&\leq C N_k ^{1-p} \|f_{k,\xi}^\ast\|_p^p \sup_{y\in \sph} \int_{\sph} |K_{3n}(x\cdot y)|^p (1+n \rho(x,y))^{\xi p}\, d\s(x)
\\
&\leq C k^{(d-1)(1-p)} n^{(d-1)(p-1)} \|f\|_p^p= C \Bigl( \f kn\Bigr)^{(d-1)(1-p)}\|f\|_p^p,
\end{align*}
where we used Lemma \ref{lem-5-3-TD} in the first step, the maximal function defined by \eqref{3-2-ch3} in the second step, and Theorem \ref{cor-3-2-ch3} and Lemma \ref{lem-1-1-sec1}  in the last step. This completes the proof.
\end{proof}

\begin{lem}\label{lem-5-5-TD}If $0<p<1$ and $f\in L^p(\sph)$ satisfies \eqref{5-3-TD}, then $f\in L(\sph)$ and
$$\int_{\sph} |f(x)|\, d\s(x) \leq C \Bigl( \sum_{k=1}^\infty k^{d-2-(d-1)p} E_k (f)_p^p\Bigr)^{\f1p} +C\|f\|_p.$$
\end{lem}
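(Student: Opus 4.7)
The plan is to prove the embedding by a standard dyadic decomposition of $f$ through near-best approximants, using the unweighted Nikolskii inequality to exchange $L^p$ for $L^1$ at the cost of a polynomial-degree factor.

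First, for each $k\ge 1$ fix $T_k\in\Pi_k^d$ with $\|f-T_k\|_p\le 2 E_k(f)_p$ (near-best approximant; this makes sense in the quasi-Banach space $L^p$). Observe that $T_{2^{j+1}}-T_{2^j}\in\Pi_{2^{j+1}}^d$ and by the $p$-triangle inequality
\[
\|T_{2^{j+1}}-T_{2^j}\|_p^p \le \|f-T_{2^{j+1}}\|_p^p + \|f-T_{2^j}\|_p^p \le C\, E_{2^j}(f)_p^p .
\]
Apply the Nikolskii inequality (Lemma~\ref{lem-2-6-TD} with $w\equiv 1$, so $s_w=d-1$, $q=1$) to $T_{2^{j+1}}-T_{2^j}$:
\[
\|T_{2^{j+1}}-T_{2^j}\|_1 \le C\, 2^{j(d-1)(1/p-1)}\, \|T_{2^{j+1}}-T_{2^j}\|_p \le C\, 2^{j(d-1)(1/p-1)} E_{2^j}(f)_p.
\]
Similarly $\|T_1\|_1\le C\|T_1\|_p\le C(E_1(f)_p+\|f\|_p)\le C\|f\|_p$.

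Next I sum the dyadic blocks. Since $0<p\le 1$, the $\ell^p\hookrightarrow\ell^1$ embedding $\sum a_j \le (\sum a_j^p)^{1/p}$ for nonnegative $a_j$ gives
\[
\sum_{j=0}^\infty 2^{j(d-1)(1/p-1)} E_{2^j}(f)_p
\le \Bigl(\sum_{j=0}^\infty 2^{j(d-1)(1-p)} E_{2^j}(f)_p^p\Bigr)^{1/p}.
\]
Now I convert the dyadic sum on the right into the full sum in the statement. For $k\in[2^j,2^{j+1})$ we have $k^{d-2-(d-1)p}\sim 2^{j(d-2-(d-1)p)}$ and, by monotonicity of $E_k(f)_p$ in $k$, $E_k(f)_p\ge E_{2^{j+1}}(f)_p$. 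Since the block has $\sim 2^j$ integers,
\[
\sum_{k=2^j}^{2^{j+1}-1} k^{d-2-(d-1)p}E_k(f)_p^p \;\ge\; c\, 2^{j(d-1)(1-p)} E_{2^{j+1}}(f)_p^p,
\]
so summing in $j$ and reindexing (at the cost of absorbing $E_1(f)_p^p\le \|f\|_p^p$ into the $\|f\|_p$ term) yields
\[
\sum_{j=0}^\infty 2^{j(d-1)(1-p)} E_{2^j}(f)_p^p \le C\sum_{k=1}^\infty k^{d-2-(d-1)p}E_k(f)_p^p + C\|f\|_p^p.
\]

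Finally I verify that the telescoping series actually represents $f$ in $L^1$. Combining the previous estimates,
\[
\sum_{j=0}^\infty \|T_{2^{j+1}}-T_{2^j}\|_1 \le C\|f\|_p + C\Bigl(\sum_{k=1}^\infty k^{d-2-(d-1)p} E_k(f)_p^p\Bigr)^{1/p}<\infty
\]
under hypothesis \eqref{5-3-TD}, so $\{T_{2^N}\}_N$ is Cauchy in $L^1$ and converges to some $g\in L^1$. On the other hand $T_{2^N}\to f$ in $L^p$. Both modes imply convergence in measure on the finite-measure space $\sph$, hence $f=g$ a.e., so $f\in L^1(\sph)$ and
\[
\|f\|_1 \le \|T_1\|_1 + \sum_{j=0}^\infty\|T_{2^{j+1}}-T_{2^j}\|_1 \le C\|f\|_p + C\Bigl(\sum_{k=1}^\infty k^{d-2-(d-1)p}E_k(f)_p^p\Bigr)^{1/p},
\]
which is the desired inequality. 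The only step with any real content is the dyadic-to-full-sum comparison together with the $\ell^p\hookrightarrow\ell^1$ embedding; everything else is bookkeeping around Nikolskii and near-best approximants. I do not anticipate a serious obstacle.
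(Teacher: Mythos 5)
Your proof is correct and follows essentially the same route as the paper's: dyadic decomposition through (near-)best approximants, Nikolskii's inequality to pass from $L^p$ to $L^1$ at the cost of the degree factor, the $\ell^p\hookrightarrow\ell^1$ embedding for $0<p\le1$, and a dyadic-to-full sum comparison; the only cosmetic difference is that you identify the $L^1$-limit with $f$ via convergence in measure where the paper invokes Fatou's lemma, which amounts to the same thing.
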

\begin{proof}
Let $f_j\in\Pi_{2^j}^d$ be such that $E_{2^j} (f)_p :=\|f-f_j\|_p$ for $j\ge 0$. Then by Fatou's lemma, we have
\begin{align*}
\|f\|_1 &\leq \|f_0\|_1+\sum_{j=1}^\infty \|f_j-f_{j-1}\|_1\leq C\|f\|_p + C \sum_{j=1}^\infty 2^{j (d-1) (\f 1p-1)}\|f_j-f_{j-1}\|_p\\
&\leq C\|f\|_p + C \sum_{j=1}^\infty 2^{j (d-1) (\f 1p-1)}E_{2^{j-1}}(f)_p\leq  C\|f\|_p + C \Bigl(\sum_{j=1}^\infty 2^{j (d-1) (1-p)}E_{2^{j-1}}(f)_p^p\Bigr)^{\f1p}\\
&\leq C \|f\|_p + C \Bigl(\sum_{k=1}^\infty k^{ (d-1) (1-p)-1}E_{k}(f)_p^p\Bigr)^{\f1p}<\infty,\end{align*}
where the second step uses the Nikolskii inequality.
\end{proof}

We are now in a position to prove Theorem \ref{thm-5-1-TD}.\\

{\it Proof of Theorem \ref{thm-5-1-TD}.}  Assume that $2^{m-1}\leq n <2^m$. Let $f_{2^j}\in\Pi_{2^j}^d$ be such that
$\|f-f_{2^j}\|_{p}=E_{2^j}(f)_p$.  Set $g_0=g_0$, and $g_j=f_{2^j}-f_{2^{j-1}}\in\Pi_{2^j}^d$ for $j\ge 1$. For each $g_j$, let $\s_j:=\sum_{k=0}^\infty Y_{j,k}, \   \ Y_{j,k}\in \HH_k^d$ be the Fourier-Laplace series built from Lemma \ref{lem-5-4-TD}. Thus, by Lemma \ref{lem-5-4-TD},
$$S_{2^j} \s_j =\sum_{k=0}^{2^j}Y_{j,k}=g_j,$$
and for any $n\ge 2^j$,
$$\|V_n \s_j\|_p \leq C \Bigl( \f {2^j}n\Bigr)^{(d-1)(\f 1p-1)}\|g_j\|_p\leq C\Bigl( \f {2^j}n\Bigr)^{(d-1)(\f 1p-1)}E_{2^{j-1}}(f)_p.$$

Now define
$$\sa:=\sum_{k=0}^\infty Y_k (x):=f_0(x)-\sum_{j=1}^\infty \sum_{k>2^j} Y_{j,k}(x),$$
where $$Y_k(x):=-\sum_{0\leq j\leq \log_2 k} Y_{j,k}(x),\   \   \text{for $k>1$}.$$
Then
\begin{align*}
V_n\s&=f_0-\sum_{j=1}^m V_n \Bigl(\sum_{2^j<k\leq 2n} Y_{j,k}\Bigr)=f_0-\sum_{j=1}^m V_n \Bigl(\sum_{k=0}^{2n} Y_{j,k}\Bigr)+\sum_{j=1}^m V_n \Bigl(\sum_{k=0}^{2^j} Y_{j,k}\Bigr)\\
&=f_0-\sum_{j=1}^m V_n(\s_j) +\sum_{j=1}^m V_n(g_j)=f_0-\sum_{j=1}^m V_n(\s_j) +\sum_{j=1}^{m-1} g_j +V_n(g_m) \\
&=f_{m-1} +V_n (g_m) -\sum_{j=1}^m V_n (\s_j).
\end{align*}
It follows that
\begin{align*}
\|f-V_n\sa\|_p^p &\leq \|f-f_{m-1}\|_p^p +\|V_n(g_m)\|_p^p +\sum_{j=1}^{m} \|V_n (\s_j)\|_p^p\\
&\leq C E_{2^{m-1}}(f)_p^p +C\sum_{j=1}^{m}  2^{(j-m)(1-p)(d-1)}\|g_j\|_p^p\\
&\leq C 2^{-m(1-p)(d-1)}\sum_{j=1}^m 2^{j(1-p)(d-1)} E_{2^j}(f)_p^p.
\end{align*}
Thus,
$$\|f-V_n\sa\|_p \leq C n^{-(d-1)(\f 1p-1)} \Bigl( \sum_{k=1}^n k^{d-2-(d-1)p} E_k (f)_p^p\Bigr)^{\f1p}.$$
This completes the proof of the first part.

To show the second part, we first observe that by Lemma \ref{lem-5-5-TD}, if $f$ satisfies \eqref{5-3-TD}, then it must be in $L^1(\sph)$, and hence $V_n f$ is defined. It follows that
\begin{align*}
&\|f-V_n f\|_p^p \leq \|f-f_{2^{m-1}}\|_p^p + \|V_n (f_{2^{m-1}})-V_n f\|_p^p
 \\&
 \leq E_{2^{m-1}} (f)_p^p +\sum_{j=m}^\infty \|V_n (f_{2^j}-f_{2^{j-1}})\|_p^p
\leq E_{2^{m-1}} (f)_p^p
\\
&+C \sum_{j=m}^\infty (2^j+n)^{(d-1)(1-p)} \int_{\sph}\int_{\sph} |K_n(x\cdot y) | ^p |f_{2^j}(y)-f_{2^{j-1}}(y)|^p\, d\s(y)\, d\s(x)\\
&\leq E_{2^{m-1}} (f)_p^p +C \sum_{j=m}^\infty n^{(d-1)(p-1)} 2^{j(d-1)(1-p)}E_{2^{j-1}} (f)_p^p\\
&\leq C n^{-(d-1)(1-p)}\sum_{k=[n/2]}^\infty k^{(d-1)(1-p)-1} E_k (f)_p^p.
\end{align*}
To complete the proof, we just need to observe that
$$f-V_n f=(f-P)-V_n(f-P),\   \    \   \forall P\in\Pi_n^d.$$\hb

Let us present a similar result for the   moduli of continuity on the sphere $\o (f, t)_p$ introduced by Ditzian (\cite{di, di1}). 
Let  $SO(d)$ denote the group of all  $d\times d$ orthogonal matrices.
Given $t\in (0,\pi)$, we denote  by  $O_t$ the class of matrices $\rho\in SO(d)$ such that
 $(\rho x \cdot x )\ge \cos t$ for all $x\in \sph$.
The first order  modulus of continuity is then defined  by
$$\omega(f,t)_p = \sup\limits_{\rho \in O_t} \|\Delta_\rho f\|_{L^p(\sph)},$$
 where $\Delta_\rho f = f(\rho x)-f(x)$.

 Using Theorem \ref{thm-5-1-TD}, and the Jackson inequality $E_n(f)_p\leq C_p \o(f, n^{-1})_p$ for $0<p<1$ proved in
  \cite[Theorem 4.1]{dd}, we deduce the following corollary:

\begin{cor}
If $f\in L^p(\sph)$ with $0<p<1$ then
there exists a Fourier-Laplace series $\sa$ on the sphere which is summable to $f$ by the generalized de la Vall\'ee Poussin means with the rate
$$ \|f-V_n \sa\|_p \leq C_{p,d} n^{-(d-1)(\f 1p-1)} \Bigl( \int_{n^{-1}}^{\pi} t^{(d-1)p -d} \o (f, t)_p^p\, dt \Br)^{\f1p}.$$
\end{cor}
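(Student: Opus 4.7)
The plan is to combine Theorem \ref{thm-5-1-TD} (specifically the first estimate \eqref{5-2-TD}) with the Jackson-type inequality $E_k(f)_p \leq C_{p,d}\,\o(f,k^{-1})_p$ for $0<p<1$ from \cite[Theorem 4.1]{dd}, and then convert the resulting arithmetic sum in $E_k(f)_p^p$ into the integral in $\o(f,t)_p^p$ that appears in the statement.

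First I would apply Theorem \ref{thm-5-1-TD} to produce a Fourier-Laplace series $\sa$ with
$$\|f-V_n\sa\|_p \leq C_p\,n^{-(d-1)(\f1p-1)}\Bigl(\sum_{k=1}^n k^{d-2-(d-1)p}\,E_k(f)_p^p\Bigr)^{\f1p},$$
and then invoke the Jackson inequality to replace each $E_k(f)_p^p$ by $C\,\o(f,k^{-1})_p^p$.

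The remaining task is to establish the comparison
$$\sum_{k=1}^n k^{d-2-(d-1)p}\,\o(f,k^{-1})_p^p \leq C \int_{n^{-1}}^{\pi} t^{(d-1)p-d}\,\o(f,t)_p^p\,dt.$$
This is a standard monotonicity argument. For $k\geq 2$ and $t\in [k^{-1},(k-1)^{-1}]$, the monotonicity of $\o(f,\cdot)_p$ gives $\o(f,k^{-1})_p\leq \o(f,t)_p$. Since $(d-1)p-d<0$ for $0<p<1$, the weight $t^{(d-1)p-d}$ is decreasing in $t$, and a direct computation shows
$$\int_{k^{-1}}^{(k-1)^{-1}} t^{(d-1)p-d}\,dt \sim k^{d-2-(d-1)p}.$$
Combining these two observations yields
$$k^{d-2-(d-1)p}\,\o(f,k^{-1})_p^p \leq C \int_{k^{-1}}^{(k-1)^{-1}} t^{(d-1)p-d}\,\o(f,t)_p^p\,dt,$$
and summation over $k=2,\ldots,n$ gives the integral on $[n^{-1},1]$. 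The isolated $k=1$ term is absorbed into $\int_1^{\pi} t^{(d-1)p-d}\,\o(f,t)_p^p\,dt$ using that $\o(f,1)_p \leq \o(f,t)_p$ and that the weight is bounded from below on $[1,\pi]$; adding the two contributions produces the full integral over $[n^{-1},\pi]$.

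No substantive obstacle is anticipated here, since both ingredients, Theorem \ref{thm-5-1-TD} and the Jackson inequality, are already at our disposal. The only point requiring attention is the exponent arithmetic in the dyadic block $[k^{-1},(k-1)^{-1}]$, namely verifying that the length of this block weighted by $t^{(d-1)p-d}$ reproduces exactly the factor $k^{d-2-(d-1)p}$ occurring in the discrete bound of Theorem \ref{thm-5-1-TD}.
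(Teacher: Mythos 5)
Your proposal is correct and follows essentially the same route as the paper, which simply cites Theorem \ref{thm-5-1-TD} together with the Jackson inequality $E_k(f)_p\leq C\,\o(f,k^{-1})_p$ from \cite[Theorem 4.1]{dd} and leaves the sum-to-integral conversion implicit. Your explicit verification that $\int_{k^{-1}}^{(k-1)^{-1}} t^{(d-1)p-d}\,dt\sim k^{d-2-(d-1)p}$ and the handling of the $k=1$ term are exactly the details one would fill in.
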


\section{
Sobolev-type embedding with weights}
In this section we study an embedding theorem for weighted Besov spaces.
Let $E_n(f)_{p,w}$ be  the
best approximation of $f \in L_{p,w}$ by spherical polynomials of degree at most $n$ in the
$L_{p,w}$-metric. Given $0 < p \le\infty, \nu > 0$ and $0 < \tau\le \infty$,  the weighted Besov space $B^\nu_{\tau}(L_{p,w})$ is the collection of all functions
$f\in L_{p,w}$ with finite quasi-norm
$$\|f\|_{B^\nu_{\tau}(L_{p,w})}=
\|f\|_{p,w}+\Big(\sum_{j=0}^\infty2^{j\nu\tau}
E_{2^j}(f)_{p,w}^\tau\Big)^{1/\tau},$$
with the usual change when $\tau=\infty$.


The following Sobolev-type embedding result for the Besov space on $\mathbb{R}^d$ with the limiting smoothness
parameter is well known: $B^r_{q}\big(L_{p}(\mathbb{R}^d)\big)\hookrightarrow L^{q}(\mathbb{R}^d)$,
$r=d\bigl(\frac1p-\frac1q\bigr)>0$ (see, e.g., \cite[(8.2)]{peetre}).

For functions on $\sph$, it was shown in \cite[Th. 2.5]{DW} that if $0<p<q\le\infty$ and $w$ is doubling, then for
 $\nu>s_w(\frac1p-\frac1q)$ one has 
$B^\nu_q(L_{p,w})\subset L_{q,w}$. In the unweighted case this result was obtained in \cite[Cor. 4]{hesse}.
 Our next theorem extends the previous results for the limiting smoothness parameter. 


\begin{thm}\label{thm-last} If $0<p<q < \infty$ and $w$ is doubling, then for $\nu:=s_w(\frac1p-\frac1q)$ we have
$B^\nu_q(L_{p,w})\subset L_{q,w}$ and
$$\|f\|_{q,w}\le C \|f\|_{B^\nu_q(L_{p,w})}$$
for all $f\in B^\nu_q(L_{p,w})$. Furthermore, if
$0<p<\infty$ and
 $\nu=\frac{s_w}{p}$, then each function $f\in B^\nu_\infty(L_{p,w})$ can be identified
with a continuous function on $\sph$.
\end{thm}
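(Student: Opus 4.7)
The plan is to combine a dyadic block decomposition of $f$ via best approximants with the weighted Nikolskii inequality of Lemma~\ref{lem-2-6-TD}. For each $j\ge 0$ I would choose $f_j\in\Pi_{2^j}^d$ realizing $\|f-f_j\|_{p,w}=E_{2^j}(f)_{p,w}$ and set $g_0:=f_0$, $g_j:=f_j-f_{j-1}\in\Pi_{2^j}^d$ for $j\ge 1$, so that $\|g_j\|_{p,w}\le 2E_{2^{j-1}}(f)_{p,w}$ and $f=\sum_{j\ge 0}g_j$ in $L_{p,w}$. Nikolskii applied block-by-block yields
\begin{equation*}
\|g_j\|_{q,w}\le C\,2^{js_w(1/p-1/q)}\|g_j\|_{p,w}=C\,2^{j\nu}E_{2^{j-1}}(f)_{p,w}.
\end{equation*}

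In the range $0<q\le 1$ the $q$-subadditivity of the quasi-norm immediately gives
\begin{equation*}
\|f\|_{q,w}^q\le \sum_j \|g_j\|_{q,w}^q\le C\sum_j 2^{j\nu q}E_{2^{j-1}}(f)_{p,w}^q\le C\|f\|_{B^{\nu}_{q}(L_{p,w})}^q,
\end{equation*}
which proves the first assertion in this range.

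The main obstacle is the range $q>1$: the triangle inequality only produces $\|\sum_j g_j\|_{q,w}\le\sum_j\|g_j\|_{q,w}$, i.e.\ an $\ell^1$-sum of the block bounds, which is strictly stronger than the $\ell^q$-sum that appears in the critical Besov norm. To close this gap I would invoke real interpolation between two subcritical embeddings from \cite[Th.~2.5]{DW}: fix $0<p<q_0<q<q_1<\infty$ with $1/q=(1-\theta)/q_0+\theta/q_1$ for some $\theta\in(0,1)$, and pick $\nu_i>s_w(1/p-1/q_i)$ satisfying $(1-\theta)\nu_0+\theta\nu_1=\nu$. The subcritical results give bounded inclusions $B^{\nu_i}_{q_i}(L_{p,w})\hookrightarrow L_{q_i,w}$, and the real interpolation identities $(L_{q_0,w},L_{q_1,w})_{\theta,q}=L_{q,w}$ together with $(B^{\nu_0}_{q_0}(L_{p,w}),B^{\nu_1}_{q_1}(L_{p,w}))_{\theta,q}=B^\nu_q(L_{p,w})$ transfer these bounds to the critical case; the technical core is the justification of these interpolation identities in the present weighted spherical setting.

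For the second assertion ($\nu=s_w/p$, $\tau=\infty$), the same decomposition with Nikolskii at $q=\infty$ gives $\|g_j\|_\infty\le C\,2^{js_w/p}E_{2^{j-1}}(f)_{p,w}\le C$, so the blocks are merely uniformly bounded and $\{f_N\}$ need not be Cauchy in the sup norm. To extract a continuous representative, my plan is to establish equicontinuity of $\{f_N\}$ on $\sph$ via a Bernstein-type modulus-of-continuity estimate on annular scales $\rho(x,y)\sim 2^{-N}$ that exploits the refined structure of the best-approximant decomposition, and then combine Arzel\`a--Ascoli with the $L_{p,w}$-convergence $f_N\to f$ to produce a continuous function coinciding with $f$ almost everywhere. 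I expect this to be the most delicate step, since the uniform bounds $\|g_j\|_\infty\le C$ alone are not summable and the equicontinuity must come from a more refined comparison than the block-by-block Nikolskii used above.
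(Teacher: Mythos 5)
Your dyadic decomposition and block-by-block Nikolskii estimate are exactly the right starting point, and your $q\le 1$ argument via $q$-subadditivity is correct. You have also correctly located the obstacle for $q>1$: the plain triangle inequality yields only an $\ell^1$ bound on the blocks, whereas the critical Besov norm is an $\ell^q$ sum. However, the interpolation route you propose to close this gap does not work, because the parameter constraints you write down are self-contradictory. If $\nu_i > s_w(\tfrac1p-\tfrac1{q_i})$ for $i=0,1$ and $\tfrac1q=(1-\theta)\tfrac1{q_0}+\theta\tfrac1{q_1}$, then
\begin{equation*}
(1-\theta)\nu_0+\theta\nu_1 \;>\; (1-\theta)\,s_w\Bigl(\tfrac1p-\tfrac1{q_0}\Bigr)+\theta\, s_w\Bigl(\tfrac1p-\tfrac1{q_1}\Bigr)\;=\;s_w\Bigl(\tfrac1p-\tfrac1q\Bigr)\;=\;\nu ,
\end{equation*}
so the requirement $(1-\theta)\nu_0+\theta\nu_1=\nu$ forces at least one $\nu_i$ to be at or below its own critical threshold, i.e.\ exactly in the range where the subcritical embedding of \cite[Th.~2.5]{DW} is unavailable. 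The critical threshold $s_w(\tfrac1p-\tfrac1q)$ is affine in $1/q$, so real interpolation of strictly supercritical embeddings can only reproduce strictly supercritical results; it cannot land on the critical line. This is a genuine gap, not a missing technicality.

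What closes the gap in the paper is Lemma~\ref{lem-9-2}, which is a consequence of the Ul'yanov-type inequality of \cite[Lemma~4.2]{dt} combined with the weighted Nikolskii inequality of Lemma~\ref{lem-2-6-TD}. This lemma gives directly, for the partial sums of the dyadic blocks,
\begin{equation*}
\Bigl\|\sum_{n=1}^N (f_{2^n}-f_{2^{n-1}})\Bigr\|_{q,w}\le C\Bigl(\sum_{n=1}^N\bigl(2^{n s_w(1/p-1/q)}E_{2^n}(f)_{p,w}\bigr)^{q_1}\Bigr)^{1/q_1}
\end{equation*}
with $q_1=q$ for $q<\infty$, i.e.\ precisely the $\ell^q$ estimate you need, without any case split on $q$ and without interpolation. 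The theorem then follows from Fatou's lemma exactly as you envision. For the second assertion the paper uses the same lemma at $q=\infty$ (where $q_1=1$), producing the partial sums as a Cauchy sequence in the sup-norm, rather than attempting an Arzel\`a--Ascoli argument; your proposal to extract a continuous representative from mere uniform boundedness of the blocks is, as you yourself note, not adequately supported. The key ingredient you are missing is therefore the Ditzian--Tikhonov Hardy-type inequality, not a finer equicontinuity estimate, and not interpolation.
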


For the proof of \eqref{thm-last}, we need the following lemma, which follows directly  from \cite[Lemma 4.2]{dt}, and Lemma \ref{lem-2-6-TD}.

\begin{lem}\label{lem-9-2} Assume that  $0<p<q\leq  \infty$ and    $f\in L_{p,w}$.  Let $\{f_{2^n}\}_{n=1}^{\infty}$ be a sequence of spherical polynomials such that $f_{2^n}\in\Pi_{2^n}^d$, and $\|f-f_{2^n}\|_{p,w}\leq C_1 E_{2^n}(f)_{p,w}$ for each $n\in\NN$ and some positive constant $C_1$.  Then for any $N\in\NN$,
$$ \|\sum_{n=1}^N (f_{2^n}-f_{2^{n-1}})\|_{q,w} \leq C_{p,q,w} \Bigl( \sum_{n=1}^N \bigl(2^{ns_w(\f 1p-\f1q)} E_{2^n}(f)_{p,w}\bigr)^{q_1}\Bigr)^{\f1{q_1}}.$$
where
$$q_1:=\begin{cases} q, \  \  \text{if $0<q<\infty$,}\\
1, \   \  \text{if $q=\infty$}.\end{cases}
$$
\end{lem}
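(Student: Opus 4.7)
The plan is to decompose the sum into dyadic polynomial blocks $\Delta_n := f_{2^n} - f_{2^{n-1}} \in \Pi_{2^n}^d$ and combine two ingredients: the weighted Plancherel--Polya type inequality of \cite[Lemma 4.2]{dt}, which passes the $L_{q,w}$-norm inside the sum of blocks, and the weighted Nikolskii inequality of Lemma~\ref{lem-2-6-TD}, which converts $L_{q,w}$-norms of polynomial blocks back to $L_{p,w}$-norms with the scaling factor $2^{ns_w(1/p-1/q)}$. The near-best approximation hypothesis on $\{f_{2^n}\}$ then turns these $L_{p,w}$-norms into the required $E_{2^n}(f)_{p,w}$ on the right-hand side.

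Concretely, the $p$-quasi-triangle inequality yields
$$\|\Delta_n\|_{p,w}\le C\bigl(\|f-f_{2^n}\|_{p,w}+\|f-f_{2^{n-1}}\|_{p,w}\bigr)\le C\,C_1\,E_{2^{n-1}}(f)_{p,w}.$$
Applying Lemma~\ref{lem-2-6-TD} to $\Delta_n$, a polynomial of degree at most $2^n$, gives
$$\|\Delta_n\|_{q,w}\le C\,2^{ns_w(1/p-1/q)}\,\|\Delta_n\|_{p,w}\le C\,2^{ns_w(1/p-1/q)}\,E_{2^{n-1}}(f)_{p,w}.$$
Then \cite[Lemma 4.2]{dt} supplies the dyadic block inequality
$$\Big\|\sum_{n=1}^N\Delta_n\Big\|_{q,w}\le C\,\Big(\sum_{n=1}^N\|\Delta_n\|_{q,w}^{q_1}\Big)^{1/q_1},$$
with $q_1=q$ for $0<q<\infty$ and $q_1=1$ for $q=\infty$. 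Substituting the previous estimate and reindexing $m=n-1$ gives the stated bound, the shift factor $2^{s_w(1/p-1/q)}$ being absorbed into the universal constant $C_{p,q,w}$ and the boundary term $E_{2^0}(f)_{p,w}$ being controlled by the first term of the target sum (or by $\|f\|_{p,w}$, which is in turn dominated by a single block-estimate at the base level).

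The genuine content of the argument sits in \cite[Lemma 4.2]{dt} in the regime $1<q<\infty$, where the Plancherel--Polya type bound for sums of dyadic polynomial blocks in weighted spaces is nontrivial; for $q=\infty$ this reduces to the ordinary triangle inequality, and for $0<q\le 1$ it follows from $q$-subadditivity of $\|\cdot\|_{q,w}^q$. Since the lemma is cited as a known tool, the remaining work is the elementary chaining of the three inequalities above together with the index-shift bookkeeping. The main obstacle I anticipate, therefore, is not the chaining itself but verifying that the small off-by-one mismatch between the $E_{2^{n-1}}(f)_{p,w}$ produced by the triangle inequality and the $E_{2^n}(f)_{p,w}$ appearing in the statement can be absorbed cleanly into the constant; this is harmless because $E_k$ is monotone decreasing and the dyadic weight $2^{ns_w(1/p-1/q)}$ changes by only a bounded factor under an index shift.
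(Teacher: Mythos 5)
Your outline matches the paper's intended one-line proof in its ingredients (dyadic blocks $\Delta_n=f_{2^n}-f_{2^{n-1}}$, the weighted Nikolskii inequality of Lemma \ref{lem-2-6-TD}, and the near-best hypothesis giving $\|\Delta_n\|_{p,w}\le C\,E_{2^{n-1}}(f)_{p,w}$), but there is a genuine gap in the step you attribute to \cite[Lemma 4.2]{dt}. The inequality you quote,
\begin{equation*}
\Bigl\|\sum_{n=1}^N\Delta_n\Bigr\|_{q,w}\le C\Bigl(\sum_{n=1}^N\|\Delta_n\|_{q,w}^{q}\Bigr)^{1/q},\qquad 1<q<\infty,
\end{equation*}
with $C$ independent of $N$, is false for general polynomial blocks $\Delta_n\in\Pi_{2^n}^d$: taking every $\Delta_n$ equal to one fixed nonzero polynomial gives $N\|g\|_{q,w}$ on the left against $CN^{1/q}\|g\|_{q,w}$ on the right, and even blocks with disjoint dyadic spectra fail for $q>2$ (lacunary exponentials give $N^{1/2}$ versus $N^{1/q}$). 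So no such Plancherel--Polya type bound in terms of the $q$-norms of the blocks can be what \cite[Lemma 4.2]{dt} asserts. That lemma is the \emph{combined} statement: for functions $\varphi_n$ in classes obeying a Nikolskii inequality with exponent $\theta$, one has $\|\sum_{n=1}^N\varphi_n\|_{q}\le C\bigl(\sum_{n=1}^N(2^{n\theta}\|\varphi_n\|_{p})^{q_1}\bigr)^{1/q_1}$, and its proof for $1<q<\infty$ exploits the geometric growth of the factors $2^{n\theta}$ across scales; this cannot be recovered by first applying Nikolskii block by block and then summing with an $\ell^q$ triangle inequality, because the latter inequality does not exist. Your chain therefore proves the lemma only in the regimes you yourself call easy, $0<q\le 1$ (by $q$-subadditivity) and $q=\infty$ (triangle inequality), and assumes away the real content for $1<q<\infty$. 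The correct route, which is what the paper means, is to apply \cite[Lemma 4.2]{dt} directly to the blocks with $\theta=s_w(\f1p-\f1q)$, its hypothesis being supplied in the weighted setting by Lemma \ref{lem-2-6-TD} (the paper's remark after the lemma stresses exactly that the proof in \cite{dt} only needs the Nikolskii inequality).

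A secondary point: the index shift does not get absorbed the way you claim. The quasi-triangle inequality produces $E_{2^{n-1}}(f)_{p,w}$, which \emph{dominates} $E_{2^n}(f)_{p,w}$; monotonicity goes the wrong way, so you cannot trade $E_{2^{n-1}}$ for $E_{2^n}$ at the cost of a constant (for instance $f\in\Pi_2^d$ can have $E_2(f)_{p,w}=0$ while $E_1(f)_{p,w}>0$). The honest output of the argument is the bound with $E_{2^{n-1}}(f)_{p,w}$, equivalently a sum starting at $n=0$; this is how the estimate is actually used in the proof of Theorem \ref{thm-last}, where the extra bottom term is covered by the Besov norm, and it points to a small imprecision in the statement of the lemma rather than to a further defect of your plan.
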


 We point out that Lemma 4.2 of \cite{dt} applies to a more general setting, where  the Nikolskii type inequality is applicable.

 Now we are in a position to show Theorem \ref{thm-last}.\\

{\it Proof of Theorem \ref{thm-last}.} 
The proof runs along the same lines as that in \cite[Th. 4.1]{dt}, but is different from  those in  \cite[Cor. 4]{hesse} and \cite[Th.2.5]{DW}. Let
  $f_{2^j}\in\Pi_{2^j}^d$ be such that $E_{2^j}(f)_{p,w}=\|f-f_{2^j}\|_{p,w}$ for $j\ge 0$. Using Lemma \ref{lem-9-2},
 we obtain
  $$\Big\|\sum\limits_{j=1}^N \big(f_{2^j}- f_{2^{j-1}}\big)\Big\|_{q,w}\le C \left(
  \sum\limits_{j=1}^N \Big(
  (2^{js_w(\f 1p-\f1q)} E_{2^j}(f)_{p,w}
  \Big)^{q_1}
  \right)^\frac{1}{q_1},\   \  \forall N\in\NN.$$
  Since
  $$f=f_1+\sum_{j=1}^\infty (f_{2^{j}}-f_{2^{j-1}}),$$
  with  the series converging  in $L_{p,w}$-metric, it follows by Fatou's lemma and equivalence of different metrics on the finite-dimensional linear space $\Pi_2^d$ that for $q<\infty$
  \begin{align*}
  \big\|f\big\|_{q,w}&\le C_q\|f_1\|_{q,w} + C_q\liminf_{N\to\infty}\Big\|\sum\limits_{j=1}^N \big(f_{2^j}- f_{2^{j-1}}\big)\Big\|_{q,w}\\
  &\leq C  \big\|f_1\big\|_{p,w} + C
\left(  \sum\limits_{j=1}^\infty \Big(
  2^{j\nu} E_{2^j}(f)_{p,w}  \Big)^{q}
  \right)^\frac{1}{q}\\
  &\leq C  \big\|f\big\|_{p,w} + C
\left(  \sum\limits_{j=1}^\infty \Big(
  2^{j\nu} E_{2^j}(f)_{p,w}  \Big)^{q}
  \right)^\frac{1}{q}\sim \|f\|_{B_q^\nu (L_{p,w})},\end{align*}
  where $\nu=s_w(\f 1p-\f1q)$. A similar argument  works equally well for the case $q=\infty$.
\hb

 Given a doubling weight $w$, using \eqref{2-5-0-ch3}, it is easily seen that
 \begin{equation}\label{9-1-eq}\min_{x\in\sph}w(B(x, n^{-1}))\ge c_{w} n^{-s_w},\  \  \forall n\in\NN,\end{equation}
where $c_w>0$ is independent of $n$ and $x$.
We shall show that the index $\nu:=s_w(\f 1p-\f1q)$ in Theorem \ref{thm-last} is sharp  under the following additional assumption on the doubling weight $w$:
\begin{equation}\label{9-2-eq}
\min_{x\in\sph} w(B(x, n^{-1})) \leq c_w' n^{-s_w},\   \  n=1,2,\cdots.
\end{equation}
More precisely, we shall prove that under the condition of \eqref{9-2-eq},   given any $0<\nu'<\nu:=s_w(\f 1p-\f 1q)$, there exists a function $f$ which satisfies  $f\in B_\tau^{\nu'}(L_{p,w})$ for all $\tau>0$, but  $f\notin L_{q,w}$.
Indeed, conditions \eqref{9-1-eq} and  \eqref{9-2-eq} imply that there exists a sequence of points $y_n \in \sph$ such that
 \begin{equation}\label{9-3-eq}w(B(y_n, n^{-1})) \sim  n^{-s_w},\   \  n=1,2,\cdots.\end{equation}
On the other hand, by Lemma 4.6 of \cite{Dai1},  there exists a sequence of positive spherical polynomials $f_n$ such that
$f_n \in \Pi_{n}^d$ and
$$ f_n(x)\sim (1+n \rho(x, y_n))^{-\ell},\  \  \forall x\in \sph,$$
where $\ell$ is any given positive number greater than $ \f 1p (s_w+d)$.
A straightforward calculation, using \eqref{9-3-eq}  and \eqref{2-4-ch3}, then shows that
$$\|f_n\|_{p_1, w}\sim \|f_n\|_{p_1, w_n} \sim n^{-s_w/p_1},\   \  \forall p_1\ge p. $$
Let
\begin{equation} \label{emb-exa}f= \sum_{n=1}^{\infty} 2^{n s_w/q} 2^{n\va} f_{2^n},\end{equation}
where $\va$ is a positive constant satisfying $0<\va<\nu-\nu'$.
Then, with $\t:=\min\{p,1\}$, we have
$$E_{2^n }(f)_{p,w}\leq \Bigl(\sum_{k\ge n }2^{{n\t s_w}/{q}} 2^{n\va \t}\|f_{2^n}\|_{p,w}^\t\Bigr)^{\f 1\t}\leq C 2^{-n s_w(\f 1p-\f1q)+n\va }=C2^{-n \nu} 2^{n\va }.$$Thus, for any $\tau>0$,
 $$\|f\|_{B_\tau^{\nu'} (L_{p,w})} \leq C \Bigl(\sum_{n=1}^\infty
 2^{n \nu' \tau}2^{-n\tau \nu} 2^{n\tau\va } \Bigr)^{\f1{\tau}}<\infty.$$
 In particular, this implies that the series \eqref{emb-exa}  converges in $L_{p,w}$-metric. Next, we show that $f\notin L_{q,w}$. To see this, we note that each term $f_{2^n}$ in the series on the right hand side of \eqref{emb-exa}  is nonnegative, thus, by the monotone convergence theorem, $f\in L_{q,w}$ if any only if the series on the right hand side of \eqref{emb-exa} converges in $L_{q,w}$-metric, but this is impossible, since
 $$2^{n s_w/q}2^{n\va} \|f_{2^n}\|_{q,w} \sim 2^{n \va }\to \infty\   \ \text{as $n\to \infty$}.$$
 This completes the proof.

We conclude this section with the following remark.

\begin{rem}
It is very easy to verify  that all weights of the form \eqref{weight} satisfy the condition \eqref{9-2-eq}. In general, one can show that if a doubling weight $w$ satisfies the condition $$ \min_{x\in\sph} w(B(x, n^{-1})) \sim n^{-\xi},\   \ n=1,2,\cdots,$$
for some $\xi>0$,
then the Nikolski inequality \eqref{Nil} and Theorem \ref{thm-last} with $s_w=\xi$ hold, and in both cases, the index $\nu:=\xi(\f 1p-\f1q)$ is sharp.
\end{rem}

\section{Appendix: Proof of Theorem  \ref{prop-1-2-ch3} }

The main purpose in this section is to prove
 Theorem \ref{prop-1-2-ch3}. The proof   relies on the following
   two   lemmas. Let us  recall that $\alpha\ge \beta\ge -\frac 12$.

\begin{lem} If $k$ is a nonnegative integer, and $\t\in [0,\pi]$, then
\begin{equation}\label{3-3}
\f 1{ 2k +\al+\be +2} E_k ^{(\al+1, \be)} (\cos\ta )=\sum_{j=0}^k
E_j^{(\al, \be)} (\cos\ta ),\    \    \   \ta\in [0,\pi]
\end{equation}
and
\begin{equation}\label{3-4}
| E_k^{(\al,\be)} (\cos \ta)|\leq \begin{cases} C k^{2\al+1},&\ \
\text{if $0\leq \ta\leq k^{-1}$,}\\
C k^{\al+\f12} \ta^{-\al-\f12} (\pi-\ta)^{-\be-\f12},&\   \
\text{if
$k^{-1}\leq \ta \leq \pi-k^{-1}$},\\
C k^{\al+\be+1},&\   \  \text{if $\pi-k^{-1}<\ta \leq
\pi$}.\end{cases}\end{equation}
If, in addition, $\t\in [0, (2k)^{-1}]$, then
\begin{equation}\label{lowerbound}
E_k^{(\a,\b)}(\cos\t) \ge \f 12 E_k^{(\a,\b)}(1)\sim k^{2\a+1}.
\end{equation}

\end{lem}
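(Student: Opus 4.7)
The plan is to handle the three claims in sequence.

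For the summation identity \eqref{3-3}, I would unwind the definition \eqref{2-1} of $E_j^{(\alpha,\beta)}$; the desired formula then becomes
\[
\sum_{j=0}^k (2j+\alpha+\beta+1)\frac{\Gamma(j+\alpha+\beta+1)}{\Gamma(j+\beta+1)}P_j^{(\alpha,\beta)}(t)
=\frac{\Gamma(k+\alpha+\beta+2)}{\Gamma(k+\beta+1)}P_k^{(\alpha+1,\beta)}(t),
\]
which is a well-known contiguous-parameter relation for Jacobi polynomials (see e.g.\ Szeg\H{o}, \emph{Orthogonal Polynomials}, \S 4.5). I would record an inductive proof: the base case $k=0$ is a direct verification, and the inductive step reduces to the three-term identity $(k+\alpha+\beta+1)P_k^{(\alpha+1,\beta)}(t)-(k+\beta)P_{k-1}^{(\alpha+1,\beta)}(t)=(2k+\alpha+\beta+1)P_k^{(\alpha,\beta)}(t)$, which is itself one of Szeg\H{o}'s mixed recurrences.

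For the pointwise bound \eqref{3-4}, I would treat each of the three regimes separately. In the middle range $k^{-1}\le\theta\le\pi-k^{-1}$, Szeg\H{o}'s classical estimate $|P_k^{(\alpha,\beta)}(\cos\theta)|\le Ck^{-1/2}(\sin\theta/2)^{-\alpha-1/2}(\cos\theta/2)^{-\beta-1/2}$ combined with the size of the normalizing factor $(2k+\alpha+\beta+1)\Gamma(k+\alpha+\beta+1)/\Gamma(k+\beta+1)\sim k^{\alpha+1}$ in the definition of $E_k^{(\alpha,\beta)}$ yields the stated bound. Near the endpoint $\theta=0$, one uses that for $\alpha\ge\beta\ge -\tfrac12$ the Jacobi polynomial attains its maximum modulus on $[-1,1]$ at $t=1$, so $|E_k^{(\alpha,\beta)}(\cos\theta)|\le E_k^{(\alpha,\beta)}(1)\sim k^{2\alpha+1}$ after using $P_k^{(\alpha,\beta)}(1)=\binom{k+\alpha}{k}$. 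Near $\theta=\pi$, the symmetry $P_k^{(\alpha,\beta)}(x)=(-1)^k P_k^{(\beta,\alpha)}(-x)$ together with $P_k^{(\alpha,\beta)}(-1)=(-1)^k\binom{k+\beta}{k}$ gives $|E_k^{(\alpha,\beta)}(-1)|\sim k^{\alpha+\beta+1}$.

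For the lower bound \eqref{lowerbound}, I would exploit the hypergeometric representation
\[
P_k^{(\alpha,\beta)}(\cos\theta) = \binom{k+\alpha}{k}\,{}_2F_1\!\bigl(-k,\,k+\alpha+\beta+1;\,\alpha+1;\,\sin^2(\theta/2)\bigr).
\]
The series has terms of strictly alternating sign, and for $\theta\in[0,(2k)^{-1}]$ the absolute value of the $n=1$ term is bounded by
\[
\frac{k(k+\alpha+\beta+1)}{\alpha+1}\sin^2(\theta/2)\le\frac{k^2\theta^2}{2(\alpha+1)}\le\frac{1}{8(\alpha+1)}\le\frac14,
\]
using $\alpha\ge -\tfrac12$. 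A straightforward ratio test shows that in this range the absolute values of successive terms decrease at least geometrically with ratio $\le 1/(8(n+1))$, so the alternating-series bracketing $1-a_1\le{}_2F_1\le 1-a_1+a_2$ gives ${}_2F_1\ge 3/4>1/2$. Multiplying by $\binom{k+\alpha}{k}\sim k^\alpha/\Gamma(\alpha+1)$ yields both $P_k^{(\alpha,\beta)}(\cos\theta)\ge\tfrac12 P_k^{(\alpha,\beta)}(1)$ and the asymptotic $E_k^{(\alpha,\beta)}(1)\sim k^{2\alpha+1}$.

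The main obstacle I anticipate is keeping the hypergeometric-series argument for \eqref{lowerbound} tight at the boundary case $\alpha=\beta=-\tfrac12$, where the factor $(\alpha+1)=\tfrac12$ is smallest and the numerical constants are worst. The computation above shows the range $\theta\le(2k)^{-1}$ is just large enough for the alternating-series bound to go through with constant $\tfrac12$; any relaxation of the hypothesis on $\theta$ would require either finer control of the higher-order terms or a fall-back to Mehler--Heine asymptotics $k^{-\alpha}P_k^{(\alpha,\beta)}(\cos(z/k))\to(z/2)^{-\alpha}J_\alpha(z)$ together with a compactness argument to cover small values of $k$.
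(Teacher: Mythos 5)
Your argument is correct, and for \eqref{3-3} and \eqref{3-4} it essentially matches the paper's proof, which simply cites the summation formula in Szeg\H{o} [(9.4.3)] and the pointwise estimates [(7.32.5), (4.1.3)], together with the Gamma-asymptotic $\Gamma(x+a)/\Gamma(x)=x^a+O(x^{a-1})$ to normalize the prefactor in \eqref{2-1}; your inductive derivation of \eqref{3-3} via the mixed recurrence $(k+\alpha+\beta+1)P_k^{(\alpha+1,\beta)}-(k+\beta)P_{k-1}^{(\alpha+1,\beta)}=(2k+\alpha+\beta+1)P_k^{(\alpha,\beta)}$ is an acceptable, slightly more self-contained substitute. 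The genuine divergence is in \eqref{lowerbound}. You expand $P_k^{(\alpha,\beta)}(\cos\theta)$ as a terminating ${}_2F_1$ and run an alternating-series bracketing, which does work once one verifies (as you sketch) that the term ratios stay below $1/(8(n+1))$ for $\theta\le(2k)^{-1}$. The paper instead applies the trigonometric Bernstein inequality to the degree-$k$ cosine polynomial $\theta\mapsto E_k^{(\alpha,\beta)}(\cos\theta)$, combined with the fact that for $\alpha\ge\beta\ge-\tfrac12$ this polynomial attains its sup at $\theta=0$:
\begin{equation*}
|E_k^{(\alpha,\beta)}(\cos\theta)-E_k^{(\alpha,\beta)}(1)|\le k\theta\,\|E_k^{(\alpha,\beta)}(\cos\cdot)\|_\infty=k\theta\,E_k^{(\alpha,\beta)}(1),
\end{equation*}
from which \eqref{lowerbound} is immediate for $k\theta\le\tfrac12$. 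The two routes give the same constant and the same admissible range, but the Bernstein argument is a one-liner and sidesteps the monotonicity verification for the alternating series (which is where, as you anticipate, the numerics get tightest at $\alpha=\beta=-\tfrac12$); your hypergeometric route is more elementary in the sense of not invoking Bernstein's inequality or the extremal-point fact, at the cost of more bookkeeping.
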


\begin{proof}
Equation ($\ref{3-3}$) follows directly  by \cite[p. 257, (9.4.3)]{Sz}
while inequality ($\ref{3-4}$) is a simple consequence of
($\ref{2-1}$) and \cite[(7.32.5), (4.1.3)]{Sz} and the following fact:
\begin{equation}\label{3-5} \f {\Ga(x+a)}{\Ga(x)} = x^a+O(x^{a-1})\    \    \
\text{as $x\to \infty$},   \    \   a\in \mathbb{R}.\end{equation}
Finally, \eqref{lowerbound} follows directly from Bernstein's inequality:
$$|E_n^{(\a,\b)}(\cos\t)-E_n^{(\a,\b)}(1)|\leq n \t\|E_n^{(\a,\b)}\|_{\infty}=n\t E_n^{(\a,\b)}(1).$$
\end{proof}

\begin{lem}\label{lem-3-3-DT} If $r>0$, and $\t\in [0,\pi]$, then
\begin{equation}|G_{n,r}(\cos\t)|\leq c n^{2\a+2+r} (1+n\t)^{-(2\a+2+r)}.
\end{equation}
\end{lem}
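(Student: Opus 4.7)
My plan is to prove Lemma \ref{lem-3-3-DT} by a dyadic decomposition of the symbol $\eta(k/n)(k(k+\alpha+\beta+1))^{r/2}$ that localizes it away from the singularity at $k=0$, thereby reducing the problem to Lemma \ref{lem-1-1-sec1}. The naive approach of applying that lemma directly with $\varphi(x)=\eta(x)\bigl(x(x+(\alpha+\beta+1)/n)\bigr)^{r/2}$ (which factors out a clean $n^{r}$) fails: for non-integer $r/2$ the function $\varphi$ is only H\"older continuous at the origin, so $\|\varphi^{(3\ell-1)}\|_\infty$ blows up and the decay exponent $\ell$ cannot be taken large enough.

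To avoid the singularity, I introduce $\psi(x):=\eta(x)-\eta(2x)$, a smooth bump supported in $[1/2,2]$, and use the telescoping identity $\eta(x)=\sum_{j\ge 0}\psi(2^{j}x)$. Setting $N_j:=n/2^{j}$ and noting that the $k=0$ term contributes nothing to $G_{n,r}$, this decomposes
\begin{equation*}
G_{n,r}(\cos\theta)=\sum_{j\ge 0}N_j^{r}\,B_{N_j,\Phi_j}^{(\alpha,\beta)}(\cos\theta),\qquad \Phi_j(x):=\psi(x)\bigl(x(x+(\alpha+\beta+1)/N_j)\bigr)^{r/2}.
\end{equation*}
Because $\Phi_j$ is supported in $[1/2,2]$, the potentially singular factor is evaluated only where $x\ge 1/2$; thus $\Phi_j\in C^\infty$, vanishes to infinite order at $0$, and—crucially—has $C^{m}$-norms bounded uniformly in $j$, provided $N_j\gtrsim 1$. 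This last condition is automatic within the effective range $0\le j\le\log_2(2n)$, since for larger $j$ the bump $\psi(2^{j}\cdot/n)$ is identically zero at every positive integer, so those terms drop out.

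I then apply Lemma \ref{lem-1-1-sec1} to each $\Phi_j$ with $\ell:=\lceil 2\alpha+2+r\rceil+1$ to obtain
\begin{equation*}
\bigl|N_j^{r}\,B_{N_j,\Phi_j}^{(\alpha,\beta)}(\cos\theta)\bigr|\le C\,N_j^{2\alpha+2+r}(1+N_j\theta)^{-\ell}.
\end{equation*}
Summing over $j$ is a routine geometric-series calculation, split at the dyadic scale $j^\ast\sim\log_2\max\{n\theta,1\}$. When $n\theta\le 1$ every decay factor is $O(1)$ and the sum of $N_j^{2\alpha+2+r}$ is controlled by its largest term, $n^{2\alpha+2+r}$. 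When $n\theta>1$, for $j\le j^\ast$ one uses $(1+N_j\theta)^{-\ell}\le(N_j\theta)^{-\ell}$ and the resulting geometric series telescopes (thanks to $\ell>2\alpha+2+r$) to $n^{2\alpha+2+r}(n\theta)^{-(2\alpha+2+r)}$; for $j>j^\ast$ the decay factor is $O(1)$ and the geometric sum is again dominated by its largest term $N_{j^\ast}^{2\alpha+2+r}\sim n^{2\alpha+2+r}(n\theta)^{-(2\alpha+2+r)}$. Combining both regimes yields the claimed $|G_{n,r}(\cos\theta)|\le C\,n^{2\alpha+2+r}(1+n\theta)^{-(2\alpha+2+r)}$.

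The conceptual heart of the argument, and the only subtle point, is securing the uniform bound on $\|\Phi_j^{(m)}\|_\infty$ independent of $n$ and $j$: this is exactly what the dyadic localization of $\psi$ to $[1/2,2]$ buys, since it shields the derivatives from the singularity of $x^{r/2}$ at the origin. Once that is in place, the rest of the proof is purely combinatorial bookkeeping with geometric series.
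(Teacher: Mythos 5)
Your proof is correct and takes essentially the same approach as the paper's: both decompose $\eta(k/n)$ dyadically via a bump $\psi$ supported away from the origin (yours on $[1/2,2]$, the paper's on $[1,4]$), apply Lemma~\ref{lem-1-1-sec1} to each smooth, uniformly-bounded piece, and sum a geometric series split at the scale $N\sim 1/\theta$. The only differences are cosmetic re-indexing (you run $N_j = n/2^j$ downward, the paper runs $2^j$ upward) and how the residual $k=1$ term is handled; the core idea and the bookkeeping are identical.
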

\begin{proof} Assume that $2^{m-1}\leq n<2^m$, and set  $\psi(x)=\eta(x/2)-\eta(x)$. Since
$$\eta(\f kn) =\eta(\f k{2^{m+1}})\eta(\f kn) =\sum_{j=0}^m (\eta(\f k{2^{j+1}})-\eta(\f k{2^j}))\eta(\f kn)+\eta(k) \eta(\f kn),$$
it follows that
$$G_{n,r}(x)=\sum_{j=0}^{m+2}F_j(x),$$
where
$F_0(x)=(2+\a+\b)^{\f r2} E_1^{(\a,\b)}(x)$, and
$$ F_j(x)=\sum_{k=2^{j}}^{2^{j+2}}\eta( \f k n)\psi(\f k{2^j}) (k(k+\a+\b+1))^{\f r2} E_k^{(\a,\b)}(x).$$
Using Lemma \ref{lem-1-1-sec1} with $N=2^j$ and $\vi(x)=\eta( \f {2^jx} n)\psi(x) (x(x+2^{-j}(\a+\b+1)))^{\f r2}$, we obtain
\begin{equation}|F_j(\cos\t)|\leq C 2^{j(r+2\a+2)}(1+2^j\t)^{-\ell},\   \   \forall \ell>0.\end{equation}
Thus, choosing $\ell>r+2\a+2$, we obtain
\begin{align*}
|G_{n,r}(\cos\t)|&\leq c \sum_{j=0}^m 2^{j(r+2\a+2)}(1+2^j\t)^{-\ell}\\
&\leq c\sum_{0\leq j\leq \min \{m, \log_2\t^{-1}\} } 2^{j(r+2\a+2)}+
\sum_{\min \{m, \log_2\t^{-1}\}< j \leq m} \t^{-\ell} 2^{j ( r+2\a+2-\ell)}\\
&\leq c n^{2\a+2+r}(1+n\t)^{-2\a-2-r}.
\end{align*}
\end{proof}

Now we are in a position to prove Theorem \ref{prop-1-2-ch3}.\\

{\it Proof of Theorem \ref{prop-1-2-ch3}.}
We first show that
\begin{equation}\label{3-6}
\|G_{n}\|_{p,\a,\b} \sim n^{(2\a+2)(1-\f 1p)}.
\end{equation}
Indeed,
the upper bound of \eqref{3-6} follows directly from Lemma \ref{lem-1-1-sec1} with $i=0$ and $\ell>\f {2\a+2}p$.
 On the other hand, using \eqref{lowerbound}, we deduce
$$ G_n(\cos\t)\ge \f 12 G_n(1) \sim n^{2\al+2},\   \  \t\in [0,(2n)^{-1}].$$
This, in particular,  implies
$$\|G_n\|_{p,\a,\b}\ge cn^{2\al+2}\Bigl(\int_0^{(2n)^{-1}} t^{2\a+1}\, dt\Bigr)^{\f1p}\sim n^{(2\a+2)(1-\f1p)},$$
which gives the desired lower estimate of \eqref{3-6}.
Thus, the proof of   \eqref{3-7} is reduced to showing that
\begin{equation}\label{3-7-equv}
\|G_{n,r} \|_{p, \a,\b} \sim \begin{cases}  n^{r-(2\a+2)(\f1p-1)},\    \   &\text{if $r>(2\a+2)(\f 1p-1)$},\\
1,\   \   &\text{if $r<(2\a+2)(\f 1p-1)$, and $r\notin \NN$},\\
\log^{\f 1p} n,\    \   & \text{if $r=(2\a+2)(\f 1p-1)$ and $r\notin\NN$.}\end{cases}
\end{equation}

The upper estimates  of \eqref{3-7-equv} follows directly from Lemma \ref{lem-3-3-DT}, while the proof of the desired  lower estimates  for the case of $r>(2\a+2)(\f 1p-1)$ can be done almost identically as that of \eqref{3-6}.

The lower estimates of  \eqref{3-7-equv} for the remaining cases  can be deduced directly from the following  crucial  lemma, which is  of independent interest.

\begin{lem}\label{lem-3-4-TD}  Let $r>0$, and assume that  $r$ is not an even integer if  $\a+\b+1>0$, and $r$ is not an integer if $\a+\b+1=0$. Then  for any  $\t \in [An^{-1}, \va]$,
\begin{equation}\label{3-11-TD} |G_{n,r} (\cos \t)| \sim  \t^{-(2\a +2+r)},\   \     \   \forall n \ge A\va^{-1},\end{equation}
where $A$ and $\va$ denote a sufficiently large and, respectively, small positive constants, both depending only on $\a$ and $r$.
\end{lem}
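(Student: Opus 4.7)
The upper estimate is immediate from Lemma~\ref{lem-3-3-DT}: for $\theta\in[An^{-1},\varepsilon]$ one has $1+n\theta\sim n\theta$, so $|G_{n,r}(\cos\theta)|\le c\,n^{2\alpha+2+r}(n\theta)^{-(2\alpha+2+r)}=c\,\theta^{-(2\alpha+2+r)}$. Only the matching lower bound is nontrivial, and my plan is to extract the dominant oscillatory contribution via the classical Darboux asymptotic for Jacobi polynomials.

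Using Szeg\H o's asymptotic (\cite[\S8.21]{Sz}), valid for $\theta\in[cn^{-1},\pi/2]$,
\[
E_k^{(\alpha,\beta)}(\cos\theta)=c_{\alpha,\beta}\,k^{\alpha+\frac12}(\sin\tfrac\theta2)^{-\alpha-\frac12}(\cos\tfrac\theta2)^{-\beta-\frac12}\cos(M_k\theta-\phi_\alpha)+R_k(\theta),
\]
with $M_k=k+(\alpha+\beta+1)/2$, $\phi_\alpha=(2\alpha+1)\pi/4$ and $|R_k(\theta)|\le c\,k^{\alpha-\frac12}\theta^{-\alpha-\frac32}$, combined with $\lambda_k^{r/2}=k^r(1+O(k^{-2}))$ and the small-angle relations $\sin(\theta/2)\sim\theta/2$, $\cos(\theta/2)\sim 1$, lets me split
\[
G_{n,r}(\cos\theta)=c'_{\alpha,\beta}\,\theta^{-\alpha-\frac12}\,T_n(\theta)+\mathcal E_n(\theta),\qquad T_n(\theta):=\sum_{k\ge 1}\eta(k/n)\,k^{\mu}\cos(M_k\theta-\phi_\alpha),
\]
with $\mu:=r+\alpha+\frac12$. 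Each piece of $\mathcal E_n$ is a $G$-type kernel with a strictly smaller exponent (either $r$ replaced by $r-2$, or with an extra power of $\theta$), so a reapplication of Lemma~\ref{lem-3-3-DT} gives $|\mathcal E_n(\theta)|\le c\,\theta^{-(2\alpha+2+r)+\delta}$ for some $\delta>0$ on the relevant range.

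The heart of the proof is then the asymptotic
\[
T_n(\theta)=\Gamma(\mu+1)\cos\bigl(\tfrac{(r+1)\pi}{2}\bigr)\,\theta^{-(\mu+1)}+O\bigl(\theta^{-\mu}\bigr)+O_N\bigl(n^{\mu+1-N}\bigr),
\]
uniformly for $\theta\in[An^{-1},\varepsilon]$ and any $N\ge 1$. I would derive it by Poisson summation applied to $x\mapsto\eta(x/n)\,x_+^\mu\cos(M_x\theta-\phi_\alpha)$: the $m=0$ term is the Fourier integral $n^{\mu+1}\!\int_0^\infty\eta(u)u^\mu\cos(nu\theta-\phi')\,du$, whose leading behaviour as $n\theta\to\infty$ is read off from the standard Fourier-transform asymptotic of $u_+^\mu$ at the origin, producing the stated main term after simplifying the phase via the identity $(\mu+1)\pi/2-\phi_\alpha=(r+1)\pi/2$; the $m\ne 0$ modes decay faster than any power of $n$ because $\eta$ is smooth and compactly supported. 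Under the hypothesis on $r$, the constant $\cos((r+1)\pi/2)$ is nonzero, so choosing $A$ large enough and $\varepsilon$ small enough yields $|T_n(\theta)|\ge c\,\theta^{-(\mu+1)}$ and hence $|G_{n,r}(\cos\theta)|\ge c\,\theta^{-(\alpha+1/2)}\theta^{-(\mu+1)}=c\,\theta^{-(2\alpha+2+r)}$, as required.

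The main obstacle is this Poisson-summation step: one has to pick $A=A(r,\alpha)$ so large that the $m=0$ main term dominates both the Poisson tail and the Darboux error $\mathcal E_n$ simultaneously, and keep careful track of phases to confirm the non-vanishing of the leading coefficient. Additional care is needed in the Chebyshev case $\alpha+\beta+1=0$, where the Darboux expansion is exact ($\mathcal E_n\equiv 0$) but a finer accounting of sub-leading terms in the integral asymptotic is required to justify the exclusion of all integer $r$ stated in the hypothesis.
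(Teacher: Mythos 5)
Your route — Darboux/Hilb asymptotics for $E_k^{(\a,\b)}$ followed by Poisson summation on the resulting cosine sum — is genuinely different from what the paper does. The paper never expands the Jacobi polynomials asymptotically: it applies the summation-by-parts identity \eqref{3-3} $\ell$ times to raise the index $\a$ to $\a+\ell$, then sums by parts $v+1$ more times to express $G_{n,r}$ through positive Ces\`aro kernels $S_k^{v,(\a+\ell,\b)}$ (Lemma \ref{lem-3-5-DT}), and gets the lower bound by discarding the positive tail and keeping only the terms with $k\le(2\t)^{-1}$. There the non-vanishing constant is $c_{v,\ell}\propto\g_{r,\ell}=2^{1-\ell}(-1)^\ell r(r-2)\cdots(r-2\ell+2)$, which is exactly where the ``$r$ not an even integer'' hypothesis enters.

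There is, however, a genuine gap in your error control that I do not see how to repair within the sketch. Your plan is to subtract the leading Darboux term, declare $\mathcal{E}_n(\t)=O(\t^{-(2\a+2+r)+\d})$, and conclude from the non-vanishing of $\cos\bigl(\tfrac{(r+1)\pi}2\bigr)$. But the leading Darboux term has relative error $O\bigl((k\t)^{-1}\bigr)$, so the pointwise bound you quoted, $|R_k(\t)|\lesssim k^{\a-\f12}\t^{-\a-\f32}$, gives
$$
\sum_{k\le 2n}\eta(k/n)\,k^r\,|R_k(\t)|\;\lesssim\;\t^{-\a-\f32}\,n^{r+\a+\f12}\;=\;(n\t)^{\a+r+\f12}\,\t^{-(2\a+2+r)},
$$
which on the range $\t\ge An^{-1}$ is \emph{larger} than your main term by a factor $A^{\a+r+\f12}$; the naive estimate cannot close. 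The real obstruction is structural: if you push the Bessel/Darboux expansion to its $j$-th order, the $j$-th correction to $E_k^{(\a,\b)}(\cos\t)$ scales like $\t^{-\a-\f12-j}k^{\a+\f12-j}(\text{trig})$, and after Poisson summation every $j$ contributes at the \emph{same} order $\t^{-(2\a+2+r)}$, with phase $(-1)^{\lfloor j/2\rfloor}\sin(\pi r/2)$ up to signs. So the leading coefficient of $G_{n,r}(\cos\t)$ is an infinite alternating sum of Bessel-coefficient contributions, not the single $j=0$ term, and the non-vanishing of that sum is precisely what has to be established. Your $\mathcal E_n$ is therefore not ``strictly smaller,'' and Lemma \ref{lem-3-3-DT} cannot be re-applied to it, because $R_k$ is not a $G$-type kernel. (Two smaller points: the $m\ne0$ Poisson modes do not decay faster than any power of $n$, since $u_+^\mu$ has a singularity at $u=0$ when $\mu\notin\NN_0$; they are instead $O(|m|^{-\mu-1})$ uniformly in $n$, which is summable and small compared to $\t^{-\mu-1}$, so that claim is fixable. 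Also note your condition $\cos((r+1)\pi/2)\ne0$ only excludes even $r$, whereas the lemma excludes all integers when $\a+\b+1=0$; you flag this, but the resolution is unclear from the sketch.) The paper's use of positivity of high-order Ces\`aro kernels elegantly bypasses all of this: the low-$k$ terms are simply nonnegative, so no asymptotic cancellation analysis is required.
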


For the proof of  Lemma \ref{lem-3-4-TD}, we need some  well-known results  for  the Ces\`aro  kernels  of the Jacobi polynomial  expansions,  defined as follows:
$$ S_{n}^{\d, (\a,\b)}(x)=\f 1{A_n^\d}\sum_{k=0}^n A_{n-k}^\d E_k^{(\a,\b)}(x),\   \   \d>0,\    \  x\in[-1,1].$$

\begin{lem}\label{lem-3-5-DT}
(i) If $\d\ge \a+\f 32$ and $\t\in [0, \f \pi 2]$, then
\begin{equation}\label{Ces} |S_n^{\d, (\a,\b)} (\cos\t)|\leq C n^{2\a+2} (1+n\t)^{-2\a-3}.\end{equation}

(ii) If $\d\ge \a+\b+2$, then the Ces\`aro $(C,\d)$-kernels $S_{n}^{\d, (\a,\b)}$ are positive on $[-1,1]$; that is,
\begin{equation}\label{pos} S_n^{\d, (\a,\b)}(x)\ge 0,\    \    x\in [-1,1].\end{equation}

\end{lem}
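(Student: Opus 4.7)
The plan is to treat the two parts separately since they rely on different techniques: part (i) is a quantitative pointwise estimate handled by splitting the sum and exploiting oscillation of Jacobi polynomials, while part (ii) is the classical Gasper positivity theorem for Cesàro means of Jacobi expansions. Both are well known in the theory of Jacobi polynomial expansions; I would either cite them directly or reproduce short proofs along the following lines.

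For part (i), I would split the range of $\theta\in[0,\pi/2]$ into the near-diagonal part $n\theta\leq 1$ and the far part $n\theta>1$. In the near part, the trivial bound from \eqref{3-4}, namely $|E_k^{(\alpha,\beta)}(\cos\theta)|\leq Ck^{2\alpha+1}$, combined with $A_{n-k}^\delta/A_n^\delta\leq 1$, gives
\begin{equation*}
|S_n^{\delta,(\alpha,\beta)}(\cos\theta)|\leq C\sum_{k=0}^n k^{2\alpha+1}\leq Cn^{2\alpha+2},
\end{equation*}
which is exactly the stated bound when $1+n\theta\sim 1$. For $n\theta>1$, I would insert the Darboux asymptotic $E_k^{(\alpha,\beta)}(\cos\theta)=c_{\alpha,\beta}\,k^{\alpha+1/2}\theta^{-\alpha-1/2}\cos((k+\gamma)\theta+\phi)+R_k(\theta)$ valid for $k\theta\geq 1$, $\theta\in[k^{-1},\pi/2]$, with remainder $|R_k(\theta)|\leq Ck^{\alpha-1/2}\theta^{-\alpha-3/2}$, and then apply Abel summation $\lfloor\delta\rfloor+1$ times. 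Using the identity $\Delta^m A_{n-k}^\delta=A_{n-k}^{\delta-m}$ and the geometric bound $|\sum_{k=0}^j e^{ik\theta}|\leq C\theta^{-1}$, the oscillation of the cosine factor converts each Abel step into a gain of $\theta^{-1}$ at the cost of one derivative on the Cesàro weights; the hypothesis $\delta\geq\alpha+3/2$ is precisely what is needed for the resulting geometric sum in $k$ to converge and produce a total factor $\theta^{-(2\alpha+3)}$. Combining this with the amplitude $k^{\alpha+1/2}\theta^{-\alpha-1/2}$ and the normalization $A_n^\delta\sim n^\delta$ yields the claimed $n^{2\alpha+2}(1+n\theta)^{-2\alpha-3}$.

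For part (ii), I would invoke (or reproduce) Gasper's positivity theorem (1977), which asserts that the Cesàro $(C,\delta)$-kernels of the Jacobi polynomial expansion are pointwise nonnegative on $[-1,1]$ whenever $\delta\geq\alpha+\beta+2$ and $\alpha\geq\beta\geq -1/2$. The standard route is via the product formula of Koornwinder: for $\alpha\geq\beta\geq -1/2$ one has a nonnegative measure $d\mu_{x,y}$ on $[-1,1]$ such that $R_k^{(\alpha,\beta)}(x)R_k^{(\alpha,\beta)}(y)=\int R_k^{(\alpha,\beta)}(z)\,d\mu_{x,y}(z)$, where $R_k^{(\alpha,\beta)}(t)=P_k^{(\alpha,\beta)}(t)/P_k^{(\alpha,\beta)}(1)$. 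Integrating the generating function identity
\begin{equation*}
\sum_{k=0}^\infty A_{n-k}^\delta \frac{\Gamma(k+\alpha+\beta+1)(2k+\alpha+\beta+1)}{\Gamma(k+\beta+1)}R_k^{(\alpha,\beta)}(z)\,r^k
\end{equation*}
against $d\mu_{x,1}(z)$ and then extracting the coefficient of $r^n$ reduces positivity of $S_n^{\delta,(\alpha,\beta)}(x)$ to the positivity of a one-variable Cesàro sum at the endpoint, which is Gasper's one-dimensional result (provable by explicit generating-function calculation and the Bateman integral representation). The borderline case $\delta=\alpha+\beta+2$ is exactly the critical index coming from the Bateman identity.

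The main obstacle in (i) is bookkeeping of the error terms $R_k(\theta)$ after the $\lfloor\delta\rfloor+1$ Abel transformations and verifying that each error term is dominated by the principal term; the endpoint condition $\delta=\alpha+3/2$ is sharp and must be tracked carefully. In (ii) the only real subtlety is the borderline case $\delta=\alpha+\beta+2$, which is delicate and is the content of Gasper's theorem; for this reason I would prefer to invoke it as a black box rather than reprove it.
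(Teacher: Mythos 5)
Your plan coincides with the paper's: the authors treat both statements as known, citing Bonami--Clerc \cite[Theorem 2.1]{BC} for the kernel estimate \eqref{Ces} and Askey--Gasper \cite{AG} together with Gasper \cite[(4.13)]{G} for the positivity \eqref{pos}, i.e. exactly the classical results you propose to invoke as black boxes (your $\delta\ge\alpha+\beta+2$ threshold for (ii) is the right one). If you do choose to write out the sketch for (i), be aware that the two-term splitting you describe does not close as stated: the non-oscillatory remainder $O(k^{\alpha-1/2}\theta^{-\alpha-3/2})$, summed against the Ces\`aro weights, contributes about $n^{\alpha+1/2}\theta^{-\alpha-3/2}$, which exceeds the target $n^{-1}\theta^{-2\alpha-3}$ by a factor $(n\theta)^{\alpha+3/2}$ when $n\theta$ is large, so one needs the full asymptotic expansion with oscillatory higher-order terms (or simply the citation, as the paper does).
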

The results of Lemma \ref{lem-3-5-DT} are well known. Indeed,
\eqref{Ces} can be found in \cite[Theorem 2.1]{BC}, whereas \eqref{pos} was proved in \cite{AG} and \cite[(4.13)]{G}.

In summary, we have reduced the proof of \eqref{3-7-equv} to showing Lemma \ref{lem-3-4-TD}. The proof of this lemma is given as follows: \\

{\it Proof of Lemma \ref{lem-3-4-TD}.}  The upper estimate of \eqref{3-11-TD}    has already been given in Lemma \ref{lem-3-3-DT}. So we only need to show the lower estimate of \eqref{3-11-TD}.

  For simplicity, we assume that $\a+\b+1>0$. The proof below with a slight modification works equally well for the case when $\a+\b+1=0$ and $r$ is not an integer.  Let  $\ell$ be the smallest positive integer bigger than $\a+\b+r+2$.  Define $\ell+1$ functions $a_{n,r,j}: [0,\infty)\to\RR$, $j=0,1,\cdots,\ell$ iteratively  by
\begin{align*}
a_{n,r,0}(s)&= (2s +\a+\b+1) (s(s+\a+\b+1))^{\f r2} \eta(\f{s}n),\\
a_{n,r,j+1}(s)& =\f{ a_{n,r,j}(s)}{ 2s+\a+\b+j+1}-\f{
a_{n,,r,j}(s+1)}{ 2s+\a+\b+j+3}\\
&=-\int_0^{1} \f d{dt} \Bigl[ \f{a_{n,r,j} (s+t)}{2(t+s)+\a+\b+j+1}\Bigr]\, dt,\   \  j=0,\cdots,\ell-1.\end{align*}
Since  $\eta$ equals $1$ on $[0,1]$ and $\a+\b+1>0$, using induction on $j$, it is easily seen that for $0\leq j\leq \ell$,
\begin{equation} \label{asy}a_{n,r,j}(s) = \gamma_{r,j} s^{r+1-2j}+ s^{r-2j}  g_j(s^{-1}),\   \   1\leq s\leq n-j,\end{equation}
 some functions $g_j\in C^\infty [0,\infty)$,
where
$\g_{r,0} =2$, and $\g_{r,j}:=2^{1-j} (-1)^jr(r-2)\cdots (r-2j+2)$ for $j\ge 1$.   Moreover, a similar argument shows that
$$ |a_{n,r,j} (s)|\leq c _j (s+1)^{r+1-2j},\   \  \forall s\ge 0.$$
Note that the constant $\g_{r,j}$ will never be zero if $r$ is not an even integer.

Next, using  \eqref{3-3} and  summation by parts $\ell$
times, we  obtain
\begin{equation}\label{1-5-0-DT} G_{n,r}(t) =c
\sum_{k=0}^{2n} \f{a_{n,r,\ell}(k)}{2k+\a+\b+\ell+1}  E_k^{(\a+\ell, \b)} (t),\end{equation}
 for some nonzero  constant $c$ depending only on  $\a$ and $\b$.
Let $v$ be the smallest positive integer greater than $\a+\b+2$.  Using summation by parts $v+1$ times, we deduce from \eqref{1-5-0-DT}  that
 \begin{equation}\label{1-5-0-DT-1} G_{n,r}(t) =C
\sum_{k=0}^{2n} \Bigl[\overrightarrow{\tr}^{v+1} \f{a_{n,r,\ell}(k)}{2k+\a+\b+\ell+1} \Bigr] A_k^v S_k^{v, (\a+\ell,\b)}(t),\end{equation}
where $\overrightarrow{\tr} \mu_k=\mu_k-\mu_{k+1}$ and $\overrightarrow{\tr}^{i+1}=\overrightarrow{\tr}\overrightarrow{\tr}^i$. Setting
$$ \vi(s)=\f{a_{n,r,\ell}(s)}{2s+\a+\b+\ell+1},$$
and using \eqref{asy}, we have, for $1\leq s\leq n -\ell-v-1$,
\begin{equation}
\vi^{(v+1)} (s) = c_{v,\ell}s^{r-2\ell-v-1}  + s^{r-2\ell-v-2} g(s^{-1}),
\end{equation}
where $g$ is a $C^\infty$-function on $[0,\infty)$, and
$$ c_{v,\ell}=2^{-1}\g_{r,\ell}(-1)^{v+1}(r-2\ell+1)(r-2\ell)\cdots (r-2\ell-v).$$
It then follows that for $1\leq k\leq n-2\ell$,
\begin{align}\overrightarrow{\tr}^{v+1}\vi(k)&=(-1)^{v+1}\int_{[0,1]^{v+1}} \vi^{(v+1)} (k+t_1+\cdots+t_{v+1})\, dt_1\cdots dt_{v+1}\notag\\
&=(-1)^{v+1}c_{v,\ell} k^{ r-2\ell-v-1}+O\Bl(k^{r-2\ell-v-2}\Br).\label{diff}\end{align}
  Since $r$ is not an even integer, and $2\ell>r+1$, the constant $c_{v,\ell}$ is not zero. For $n-2\ell\leq k\leq 2n$, we have the following easy estimate
 \begin{equation}\label{3-19-TD}
 |\overrightarrow{\tr}^{v+1}\vi(k)|\leq c k^{ r-2\ell-v-1}.
 \end{equation}

 Thus, using \eqref{3-19-TD} and \eqref{diff},  we may rewrite \eqref{1-5-0-DT-1} in the form
 $$G_{n,r}(\cos\t)=c \sum_{k=1}^{n-2\ell} k^{r-2\ell-v-1} A_k^v S_k^{v, (\a+\ell,\b)} (\cos\t)+ R_{n,1}(\t)+R_{n,2}(\t),$$
 where $c\neq 0$, and
 \begin{align*} |R_{n,1}(\t)|&\leq C\sum_{k=[n/2]}^{2n} k^{r-2\ell-1}|S_k^{v, (\a+\ell,\b)} (\cos\t)|,\\
 |R_{n,2}(\t)|&\leq C\sum_{k=1}^{n} k^{r-2\ell-2}|S_k^{v, (\a+\ell,\b)} (\cos\t)|.\end{align*}
 Since the Ces\`aro kernels $S_k^{v, (\a+\ell,\b)} (\cos\t)$ are positive by (\ref{pos}), it follows that for $\t\in [An^{-1},\va]$,
 \begin{align*}
 \sum_{k=1}^{n-2\ell}& k^{r-2\ell-v-1} A_k^v S_k^{v, (\a+\ell,\b)} (\cos\t)\ge \sum_{1\leq k\leq 2^{-1}\t^{-1}} k^{r-2\ell-v-1} A_k^v S_k^{v, (\a+\ell,\b)} (\cos\t)\\
 &\ge c \sum_{1\leq k\leq 2^{-1}\t^{-1}} k^{ r+2\a+1}\ge c_1\t^{-(r+2\a+2)},
 \end{align*}
 where we have used the positivity of $S_k^{v, (\a+\ell,\b)}$ in the first step, and  \eqref{lowerbound} in the second step.

 To estimate the reminder term $R_{n,1}(\t)$, we use \eqref{Ces} and obtain \begin{align*}
 |R_{n,1}(\t)|&\leq C \sum_{k=[n/2]}^{2n} k^{r-2\ell-1} k^{-1} \t^{-(2\al+2\ell+3)}\leq  n^{r-2\ell-1} \t^{-(2\a+2\ell+3)}\\
 &=c (n\t)^{r-2\ell-1} \t^{-(2\a-r+2)}\leq c_2 A^{-(2\ell+1-r)}\t^{-(2\a-r+2)}\end{align*}provided that $n\t\ge A$.
 Similarly, using  \eqref{Ces}, we have
 \begin{align*}
 |R_{n,2}(\t)|&\leq c \sum_{1\leq k\leq \t^{-1}} k^{r-2\ell-2} k^{2(\a+\ell)+2} +c\sum_{\t^{-1}\leq k\leq 2n} k^{r-2\ell-2} k^{-1} \t^{-(2\a+2\ell+3)}\\
 & \leq c\t^{-(r+2\a+1)}+c \t^{-(r-2\ell-2)} \t^{-(2\a+2\ell+3)}\leq c\t^{-(r+2\a+1)}\leq c_3 \va \t^{-(r+2\a+2)}\end{align*}
 provided that $\t\leq \va$.

 Putting these together, we conclude that for $\t\in [n^{-1}A,\va]$,
 $$|G_{n,r}(\cos\t)|\ge \Bigl[c_1 -c_2A^{-(2\ell+1-r)}-c_3 \va \Bigr]\t^{-(r+2\a+2)}\ge c\t^{-(r+2\a+2)},$$
 provided that $A$ is large enough, and $\va$ is sufficiently small. This completes the proof.
  \hb

\vspace{5mm}


\bibliographystyle{amsalpha}

\begin{thebibliography}{BHW}

 \bi{arestov}
V. V. Arestov,
On integral inequalities for trigonometric polynomials and their derivatives,
{\it Math. USSR, Izv.} {\bf 18} (1982), 1--17; translation from {\it Izv. Akad. Nauk SSSR}, Ser. Mat. {\bf 45} (1981), 3--22.

 \bi{AG} R. Askey and G.  Gasper,  Positive Jacobi polynomial sums II. {\it  Amer. J. Math.} {\bf 98} (1976), no. 3, 709--737.

\bi{BL} E. Belinskii and E.  Liflyand,  Approximation properties in $L_p$, $0<p<1$, {\it Funct. Approx. Comment. Math.} {\bf 22} (1993), 189--199.

\bibitem {BC}   A. Bonami and   J. L.  Clerc, Sommes de Ces\`{a}ro et
multiplicateurs des d\`{e}veloppments en harmonique sph\'eriques,
   {\it Trans. Amer. Math. Soc.} {\bf 183} (1973), 223--263.

\bibitem{BD}
         G. Brown  and  F. Dai,
          Approximation of smooth functions on compact two-point
          homogeneous spaces,
          \textit{  J. Funct. Anal.}  {\bf 220} (2005), no. 2, 401--423.


   \bi{Dai5} F. Dai,  Strong convergence of spherical harmonic
expansions on $H\sp 1(S\sp {d-1})$, {\it  Constr. Approx.} {\bf
22} (2005), no. 3, 417--436.

   \bi{Dai1} F. Dai,  Multivariate polynomial inequalities with
respect to doubling weights and $A_ \infty$ weights, {\it  J.
Funct. Anal.} {\bf 235} (2006),  no. 1, 137--170.

 \bibitem{Dai2}
          F. Dai,  Jackson-type inequality for doubling weights on the sphere,
          {\it  Constr. Approx.} {\bf  24}  (2006),  91--112.


\bi{dd} F. Dai and Z. Ditzian,
Jackson theorem in $L_p,$ $0 < p < 1,$ for functions on
the sphere, {\it Journal of Approx. Theory},  {\bf 162} (2010), 382--391.

\bi{DW} F. Dai and H. P. Wang,  Optimal   cubature formulas
  in  weighted   Besov  spaces with $A_\infty$ weights
 on multivariate domains,
{\it Constr. Approx.}, {\bf 37} (2013), 167--194.



\bibitem{Di3}  Z. Ditzian, Fractional derivatives and best
approximation,\    {\it Acta Math. Hungar.},\    {\bf 81} (1998),
323--348.

\bibitem{di}
Z. Ditzian,
 A modulus of smoothness on the unit sphere, {\it J. Anal. Math.},  {\bf 79} (1999), 189--200.

\bibitem{di1}
Z. Ditzian, Jackson-type inequality on the sphere, {\it Acta Math. Hungar.},\    {\bf 102}  (2004), (1–2), 1--35.

\bibitem{dt}
 Z. Ditzian and S. Tikhonov,
{Ul'yanov and Nikol'skii-type inequalities}, {\it J. Approx. Theory}, {\bf 133} (2005), 1, 100--133.


\bibitem{er} T. Erd\'{e}lyi, Notes on inequalities with doubling weights, {\it J. Approx. Theory}, {\bf 100}  (1999), no. 1, 60--72.


  \bi{G}G.  Gasper,  Positive sums of the classical orthogonal polynomials, {\it  SIAM J. Math. Anal.} {\bf 8 }(1977), no. 3, 423--447.


  \bi{hesse}
K. Hesse, H. N. Mhaskar, and I. H. Sloan, Quadrature in Besov spaces on the Euclidean sphere,
{\it  J. of Complexity}, {\bf 23} (2007), no. 4-6, 528--552.

\bibitem{IPX}
          K. Ivanov, P. Petrushev, and Y. Xu,
          Sub-exponentially localized kernels and frames induced by orthogonal expansions,
          \textit{Math. Z.}, \textbf{264} (2010),  361--397.



\bibitem{kam}
A. I. Kamzolov, Bernstein's inequality for fractional derivatives of polynomials in spherical harmonics, {\it
Russian Mathematical Surveys},  {\bf 39}(2) 1984,  163; translation from {\it Uspekhi Mat. Nauk}, {\bf 39} (1984), no. 2 (236), 159--160.

\bibitem{lerner}
A. K. Lerner,  C. P\'{e}rez, A new characterization of the Muckenhoupt $A_p$ weights through an extension of the Lorentz-Shimogaki theorem,
{\it Indiana Univ. Math. J.}, {\bf  56} (2007), no. 6, 2697--2722.

  \bi{li}
P. I. Lizorkin, Estimates for trigonometric and the Bernstein inequality for fractional derivatives, {\it Izv. AN, Ser. Mat.}
{\bf 29} (1965) 109--126 (in Russian); translated in: {\it Am. Math. Soc., Transl.},  {\bf 77} (1968), 45--62.


  \bi{MT2}  G.  Mastroianni  and V. Totik,  Weighted polynomial
inequalities with doubling and $A_ \infty$ weights,  {\it Constr.
Approx.} {\bf 16} (2000), no. 1, 37--71.

  \bi{osw}
P. Oswald,
Rate of approximation by de la Vallee-Poussin means of trigonometric series in the metric of $L_p$ $(0<p<1)$, 
{\it Sov. J. Contemp. Math. Anal., Arm. Acad. Sci.} {\bf 18} (1983), no. 3, 63--78; translation from {\it Izv. Akad. Nauk Arm. SSR, Mat.} {\bf 18} (1983), no. 3, 230--245.

  \bi{peetre}
J.~Peetre, {Espaces d'interpolation et th\'{e}eor\`{e}me de Soboleff}, {\it Ann. Inst.
Fourier (Grenoble)}, \textbf{16} (1966), 279--317.


  \bi{ru1}
K. Runovskii and H.-J. Schmeisser, On some extensions of Bernstein's inequalities for trigonometric polynomials, {\it Funct. et Approx.} {\bf  29} (2004),
125--142.

  \bi{ru2}
K. Runovskii and H.-J. Schmeisser,
Inequalities of Calderón-Zygmund type for trigonometric polynomials,
{\it Georgian Math. J.}, {\bf 8} (2001), no. 1, 165--179.

\bibitem{Stein95}
        E. M. Stein,
       \textit{Harmonic Analysis: Real-Variable Methods, Orthogonality,
        and Oscillatroy Integrals},
        Princeton Univ. Press, Princeton, NJ, 1993.


  \bi{Sz}
G. Szeg\"{o}, {\it Orthogonal Polynomials}, Amer. Math. Soc., New York, 1967.

  \bi{tre}
W. Trebels, Multipliers for $(C, \alpha)$-bounded Fourier expansions in Banach spaces and
approximation theory, {\it Lecture Notes in Mathematics}, Springer, Vol. 329, 1973.

\bibitem {WL}
         K. Y. Wang     and  L. Q.  Li,
          \textit{Harmonic Analysis and Approximation on the unit Sphere},
          Science  Press, Beijing, 2000.


  \bi{wu}
J. Wu, Lower bounds for an integral involving fractional Laplacians and the generalized Navier-Stokes equations in Besov spaces,
{\it Commun. Math. Phys.}, {\bf 263} (2006), no. 3, 803--831.


        \bi{wu1}
J. Wu, Existence and uniqueness results for the 2-D dissipative quasi-geostrophic equation, {\it Nonlinear Anal., Theory Methods Appl.}
{\bf 67} (2007), no. 11, 3013--3036.





\end{thebibliography}

\end{document}